
\documentclass[11pt]{article}

\RequirePackage[OT1]{fontenc}
\RequirePackage{amsthm,amsmath}
\RequirePackage[colorlinks,citecolor=blue,urlcolor=blue]{hyperref}

%Packages we added
\usepackage{amssymb,latexsym}
\usepackage{dsfont, color}
\usepackage{verbatim}

%en plus pour arxiv : 

\usepackage{makeidx}

\usepackage[T1]{fontenc}
\usepackage[english]{babel}

\usepackage[latin1]{inputenc}
\usepackage{amsfonts}
\usepackage{amsmath, color,hyperref}
\usepackage{epsfig}%pour inserer figures
\usepackage{fancybox}%pour inserer programmes
\usepackage{amsthm}% pour proofs
\usepackage{graphicx}
\usepackage{authblk}

% settings
%\pubyear{2005}
%\volume{0}
%\issue{0}
%\firstpage{1}
%\lastpage{8}

%\arxiv{arXiv:1509.09273}

%\startlocaldefs

\bibliographystyle{alpha}

\setlength{\topmargin}{-0.5in} \setlength{\textheight}{9.25in}
\setlength{\oddsidemargin}{0.0in} \setlength{\evensidemargin}{0.0in}
\setlength{\textwidth}{6.5in}

\numberwithin{equation}{section}
\theoremstyle{plain}
\newtheorem{theorem}{Theorem}[section]
\newtheorem{proposition}{Proposition}[section]
\newtheorem{lemma}{Lemma}[section]
\newtheorem{remark}{Remark}[section]
\newtheorem{corollary}{Corollary}[section]

\newcommand{\GL}{\mathbb{G}_N^{\pi}}

\newcommand{\FHT}{\mathbb{F}_{N}^{\mathrm{HT}}}
\newcommand{\FHJ}{\mathbb{F}_N^{\mathrm{HJ}}}
\newcommand{\FN}{\mathbb{F}_N}

\newcommand{\SigmaHTFN}{\mathbf{\Sigma}_{\mathrm{HT}}}
\newcommand{\SigmaHTF}{\mathbf{\Sigma}^F_{\mathrm{HT}}}

\newcommand{\SigmaHJF}{\mathbf{\Sigma}^F_{\mathrm{HJ}}}

\newcommand{\tqed}{$\Box$}
\newcommand{\Log}{\mathrm{Log}}

\def\ssum{\mathop{\sum\sum}}
\def\sssum{\mathop{\sum\sum\sum}}
\def\ssssum{\mathop{\sum\sum\sum\sum}}

\author{H\'el\`ene Boistard}
\affil{Toulouse School of Economics}
\author{Hendrik P. Lopuha\"a} 
\affil{Delft University of Technology}
\author{Anne Ruiz-Gazen}
\affil{Toulouse School of Economics}
%\endlocaldefs

\title{Functional central limit theorems\\ for single-stage samplings designs}

\begin{document}
\date{}
\maketitle
%\begin{frontmatter}
%\runtitle{Functional CLTs for single-stage samplings designs}
%\thankstext{T1}{Footnote to the title with the ``thankstext'' command.}

%\begin{aug}
%\author{\fnms{H\'el\`ene} \snm{ Boistard}\thanksref{m1}\ead[label=e1]{helene@boistard.fr}},
%\author{\fnms{Hendrik P.} \snm{Lopuha\"a}\thanksref{m2}\ead[label=e2]{h.p.lopuhaa@tudelft.nl}}\\
%\and
%\author{\fnms{Anne} \snm{Ruiz-Gazen}\thanksref{m1}
%\ead[label=e3]{anne.ruiz-gazen@tse-fr.eu}}
%\begin{aug}
%\author{\fnms{H\'el\`ene} \snm{Boistard}\ead[label=e1]{helene@boistard.fr}},
%\author{\fnms{Hendrik P.} \snm{Lopuha\"a}\ead[label=e2]{h.p.lopuhaa@tudelft.nl}}\\
%\and
%\author{\fnms{Anne} \snm{Ruiz-Gazen}
%\ead[label=e3]{anne.ruiz-gazen@tse-fr.eu}}
%
%
%%\ead[label=u1,url]{http://www.foo.com}}
%
%%\thankstext{t1}{Some comment}
%%\thankstext{t2}{First supporter of the project}
%%\thankstext{t3}{Second supporter of the project}
%\runauthor{H. Boistard et al.}

%\affiliation{Toulouse School of Economics\thanksmark{m1} and Delft University of Technology\thanksmark{m2}}

%\address{H\'el\`ene Boistard and Anne Ruiz-Gazen\\
%Toulouse School of Economics\\
%21 all\'ee de Brienne\\
%31000 Toulouse, France\\
%\printead{e1}\\
%\phantom{E-mail:\ }\printead*{e3}}
%
%\address{Hendrik P. Lopuha\"a\\
%Delft Institute of Applied Mathematics\\
%Delft University of Technology\\
%Delft, The Netherlands\\
%\printead{e2}\\
%}
%\end{aug}

\begin{abstract}
For a joint model-based and design-based inference, we establish
functional central limit theorems for the Horvitz-Thompson empirical process and the
H\'ajek empirical process centered by their finite population mean as well as by their
super-population mean in a survey sampling framework.
The results apply to {single-stage unequal probability} sampling designs and essentially only require conditions on higher order correlations.
We apply our main results to a Hadamard differentiable statistical functional and illustrate its limit behavior
by means of a computer simulation.
\end{abstract}

\noindent MSC 2010 subject classifications: Primary 62D05\\
Keywords: design and model-based inference, H\'ajek Process, Horvitz-Thompson process, rejective sampling, Poisson sampling, high entropy designs, poverty rate

%\begin{keyword}[class=MSC]
%\kwd[Primary ]{62D05}
%%\kwd{60K35}
%%\kwd[; secondary ]{60K35}
%\end{keyword}

%\begin{keyword}
%\kwd{design and model-based inference}
%\kwd{H\'ajek Process}
%\kwd{Horvitz-Thompson process}
%\kwd{rejective sampling}
%\kwd{Poisson sampling}
%\kwd{high entropy designs}
%\kwd{poverty rate}
%\end{keyword}
%
%\end{frontmatter}

\section{Introduction}
\label{sec:intro}
Functional central limit theorems are well established in statistics.
Much of the theory has been developed for empirical processes of independent summands.
In combination with the functional delta-method
they have become a very powerful tool for investigating the limit behavior for Hadamard or Fr\'echet differentiable statistical functionals
(e.g., see~\cite{van_1996} or~\cite{vandervaart1998} for a rigorous treatment with several applications).

In survey sampling, results on functional central limit theorems are far from complete.
At the same time there is a need for such results.
For instance, in~\cite{dell2008} the limit distribution of several statistical functionals is investigated,
under the assumption that such a limit theorem exists for a design-based empirical process,
whereas in~\cite{barrettdonald2009} the existence of a functional central limit theorem is assumed, to
perform model-based inference on several Gini indices.
Weak convergence of processes in combination with the delta method are treated in~\cite{bhattacharya2007}, \cite{davidson2009}, \cite{bhattacharyamazumder2011},
but these results are tailor made for specific statistical functionals, and do not apply to the empirical processes
that are typically considered in survey sampling.

Recently, functional central limit theorems for empirical processes in survey sampling have appeared in the literature.
Most of them are concerned with empirical processes indexed by a class of functions,
see~\cite{BreslowWellner_2007},\cite{Saegusa_2013}, and~\cite{Bertail_2013}.
{
Weak convergence under finite population two-phase stratified sampling, is established in~\cite{BreslowWellner_2007} and~\cite{Saegusa_2013}
for an empirical process indexed by a class of functions,
which is comparable to our Horvitz-Thompson empirical process in Theorem~\ref{th:FCLT HT F}.
Although their functional CLT allows general function classes, it only covers sampling designs
with equal inclusion probabilities within strata that assume exchangeability of the inclusion indicators,
such as simple random sampling and Bernoulli sampling.
Their approach uses results on exchangeable weighted bootstrap for empirical processes
from~\cite{Praestgaard_1993}, as incorporated in~\cite{van_1996}.
This approach, in particular the application of Theorem~3.6.13 in~\cite{van_1996},
seems difficult to extend to more complex sampling designs that go beyond exchangeable inclusion indicators.
In~\cite{Bertail_2013} a functional CLT is established, for a variance corrected Horvitz-Thompson empirical process under Poisson sampling.
In this case, one deals with a summation of independent terms, which allows the use of Theorem~2.11.1 from~\cite{van_1996}.
From their result a functional CLT under rejective sampling can then be established for the design-based Horvitz-Thompson process.
This is due to the close connection between Poisson sampling and rejective sampling.
For this reason, the approach used in~\cite{Bertail_2013} seems difficult to extend to other sampling designs.

Empirical processes indexed by a real valued parameter are considered in~\cite{Wang_2012}, \cite{conti2014}, and~\cite{conti2015}.
A functional CLT for the H\'ajek empirical c.d.f.~centered around the super-population mean is formulated in~\cite{Wang_2012},
and a similar result is implicitly conjectured for the Horvitz-Thompson empirical process.
Unfortunately, the paper seems to miss a number of assumptions  and the argument establishing Billingsley's tightness condition seems incomplete.
As a consequence, assumption~5 in~\cite{Wang_2012} differs somewhat from our conditions (C2)-(C4).
The remaining assumptions in~\cite{Wang_2012} are comparable to the ones needed for our Theorem~\ref{th:FCLT HJ F}.
\cite{conti2014} and~\cite{conti2015} consider high entropy designs, i.e., sampling designs that are close in Hellinger distance to the rejective
sampling design.
Functional CLT's are obtained for the Horvitz-Thompson (see~\cite{conti2014}) and H\'ajek (see~\cite{conti2015})
empirical c.d.f.'s both centered around the finite population mean.
}

The main purpose of the present paper is to establish functional central limit theorems for the
Horvitz-Thompson and the H\'ajek empirical distribution function that apply to general {{single-stage} unequal probability} sampling designs.
{In {the} context of weighted likelihood, the Horvitz-Thompson empirical process is a particular case of the inverse probability weighted empirical process
which is not necessarily the most efficient, see \cite{Robins_1994}. Its efficiency can be improved by using estimated weights, see \cite{Saegusa_2013}.
In the present paper we do not follow this path of the literature. We rather focus on the Horvitz-Thompson and the H\'ajek empirical processes that are related
to the Horvitz-Thompson and H\'ajek distribution function estimators as defined for example in \cite{Dorfman_2009}.}
For design-based inference about finite population parameters, these empirical distribution functions will be centered
around their population mean.
On the other hand, in many situations involving survey data, one is interested in the corresponding model parameters
(e.g., see~\cite{KornGraubard1998} and~\cite{Binder_2009}).
Recently, Rubin-Bleuer and Schiopu Kratina~\cite{Rubin-Bleuer_Kratina_2005} defined a mathematical framework for joint model-based and design-based inference through a probability product-space
and introduced a general and unified methodology for studying the asymptotic properties of model parameter estimators.
To incorporate both types of inferences, we consider the Horvitz-Thompson empirical process and the H\'ajek empirical process
under the super-population model described in~\cite{Rubin-Bleuer_Kratina_2005}, both centered around
their finite population mean as well as around their super-population mean.
Our main results are functional central limit theorems
for both empirical processes indexed by a real valued parameter
and apply to generic sampling schemes.
These results are established only requiring the usual standard assumptions that one encounters in asymptotic theory in survey sampling.
Our approach was inspired by an unpublished manuscript from Philippe Fevrier and Nicolas Ragache,
which was the outcome of an internship at INSEE in 2001.

The article is organized as follows.
Notations and assumptions are discussed in Section~\ref{sec:notation}.
In particular we briefly discuss the joint
model-based and design-based inference setting defined in~\cite{Rubin-Bleuer_Kratina_2005}.
In Sections~\ref{sec:HT} and~\ref{sec:Hajek}, we list the assumptions and state our main results.
Our assumptions essentially concern the inclusion probabilities of the sampling design up to the fourth order and a
central limit theorem (CLT)
for the Horvitz-Thompson estimator of a population total for i.i.d.~bounded random variables.
Our results allow random inclusion probabilities and are stated in terms of the design-based expected sample size,
but we also formulate more detailed results in case these quantities are deterministic.
{In Section~\ref{sec:examples} we discuss two specific examples: high entropy sampling designs and fixed size sampling designs with deterministic inclusion probabilities.
It turns out that in these cases the conditions used for general {single-stage} unequal probability sampling designs can be simplified.}

As an application of our results, in combination with the functional delta-method,
we obtain the limit distribution of the poverty rate in Section~\ref{sec:hadamard}.
%{Finally, this example is further investigated in Section~\ref{sec:simulation} by means of a simulation.}
This example is further investigated in Section~\ref{sec:simulation} by means of a simulation.
{Finally, in Section~\ref{sec:related} we discuss our results in relation to more complex designs.}
All proofs are deferred to Section~\ref{sec:proofs} and some tedious technicalities can be found in~\cite{Boistard_2015}.

\section{Notations and assumptions}
\label{sec:notation}
We adopt the super-population setup as described in~\cite{Rubin-Bleuer_Kratina_2005}.
Consider a sequence of finite populations $(\mathcal{U}^N)$, of sizes $N=1,2,\ldots$.
With each population we associate a set of indices $U_{N}=\{1,2,\ldots,N\}$.
Furthermore, for each index $i\in U_{N}$, we have a tuple $(y_i,z_i)\in \mathbb{R}\times \mathbb{R}_+^{q}$.
We denote $\mathbf{y}^N=(y_1,y_2,\ldots,y_{N})\in \mathbb{R}^{N}$
and $\mathbf{z}^N\in \mathbb{R}_+^{q\times N}$ similarly.
The vector~$\mathbf{y}^N$ contains the values of the variable of interest and
$\mathbf{z}^N$ contains information for the sampling design.
We assume that the values in each finite population are realizations of random variables
$(Y_i,Z_i)\in \mathbb{R}\times \mathbb{R}_+^{q}$, for $i=1,2,\ldots,N$,
on a common probability space $(\Omega,\mathfrak{F},\mathbb{P}_m)$.
Similarly, we denote $\mathbf{Y}^N=(Y_1,Y_2,\ldots,Y_{N})\in \mathbb{R}^{N}$
and $\mathbf{Z}^N\in \mathbb{R}_+^{q\times N}$.
To incorporate the sampling design, a product space is defined as follows.
For all $N=1,2,\ldots$, let
$\mathcal{S}_N=\left\{s: s\subset U_{N}\right\}$
be the collection of subsets of $U_{N}$ and let $\mathfrak{A}_N=\sigma(\mathcal{S}_N)$ be the $\sigma$-algebra generated
by~$\mathcal{S}_N$.
A sampling design associated to some sampling scheme is a function
${P}:\mathfrak{A}_N\times \mathbb{R}_+^{q\times N}\mapsto [0,1]$,
such that
\begin{itemize}
  \item[(i)]
  for all $s\in  \mathcal{S}_N$, $\mathbf{z}^N\mapsto {P}(s,\mathbf{z}^N)$ is a Borel-measurable function on $\mathbb{R}_+^{q\times N}$.
  \item[(ii)]
  for all $\mathbf{z}^N\in \mathbb{R}_+^{q\times N}$,
  $A\mapsto {P}(A,\mathbf{z}^N)$
is a probability measure on $\mathfrak{A}_N$.
\end{itemize}
Note that for each $\omega\in\Omega$, we can define a probability measure
$A\mapsto\mathbb{P}_d(A,\omega)=\sum_{s\in A}{P}(s,\mathbf{Z}^{N}(\omega))$
on the design space $(\mathcal{S}_N,\mathfrak{A}_N)$.
Corresponding expectations will be denoted by $\mathbb{E}_d(\cdot,\omega)$.
Next, we define a product probability space that includes the super-population
and the design space, under the premise that sample selection and the model characteristic
are independent given the design variables.
Let $(\mathcal{S}_N\times \Omega, \mathfrak{A}_N\times \mathfrak{F})$
be the product space with probability measure $\mathbb{P}_{d,m}$ defined on simple rectangles $\{s\}\times E\in \mathfrak{A}_N\times \mathfrak{F}$
by
\[
\mathbb{P}_{d,m}(\{s\}\times E)
=
\int_E
{P}(s,\mathbf{Z}^N(\omega))
\,\text{d}\mathbb{P}_m(\omega)
=
\int_E
\mathbb{P}_d(\{s\},\omega)
\,\text{d}\mathbb{P}_m(\omega).
\]
When taking expectations or computing probabilities,
we will emphasize whether this is with respect either to the measure
$\mathbb{P}_{d,m}$ associated with the product space $(\mathcal{S}_N\times\Omega,\mathfrak{A}_N\times \mathfrak{F})$,
or the measure $\mathbb{P}_d$ associated with the design space $(\mathcal{S}_N,\mathfrak{A}_N)$,
or the measure $\mathbb{P}_m$ associated with the super-population space $(\Omega,\mathfrak{F})$.

If $n_s$ denotes the size of sample $s$,
then this may depend on the specific sampling design including the values of
the design variables $Z_1(\omega),\ldots,Z_N(\omega)$.
Similarly, the inclusion probabilities may depend on the values of the design variables,
$\pi_i(\omega)=\mathbb{E}_d(\xi_i,\omega)=\sum_{s\ni i}{P}\left(s,\mathbf{Z}^N(\omega)\right)$,
where $\xi_i$ is the indicator $\mathds{1}_{\{s\ni i\}}$.
Instead of $n_s$, we will consider $n=\mathbb{E}_d[n_s(\omega)]=\sum_{i=1}^N \mathbb{E}_d(\xi_i,\omega)=\sum_{i=1}^N \pi_i(\omega)$.
This means that
the inclusion probabilities and the design-based expected sample size
may be random variables on $(\Omega,\mathfrak{F},\mathbb{P}_m)$.
{For instance~\cite{Bertail_2013} considers $\pi_i=\pi(Z_i)$, where the pairs $(Y_i,Z_i)$ are assumed to be i.i.d.~random vectors on $\Omega$,
and~\cite{conti2015} considers $\pi_i=nh(Z_i)/\sum_{j=1}^Nh(Z_j)$, for some positive function~$h$.}

We first consider the Horvitz-Thompson (HT) empirical processes,
obtained from the HT empirical c.d.f.:
\begin{equation}
\label{eq:HT cdf}
\FHT(t)
=
\frac{1}{N}
\sum_{i=1}^N
\frac{\xi_i\mathds{1}_{\{Y_i\leq t\}}}{\pi_i},
\quad
t\in \mathbb{R}.
\end{equation}
We will consider {the} HT empirical process $\sqrt{n}(\FHT-\FN)$, obtained by centering around the empirical c.d.f.~$\FN$ of $Y_1,\ldots,Y_N$,
as well as the HT empirical process $\sqrt{n}(\FHT-F)$, obtained by centering around the c.d.f.~$F$ of the $Y_i$'s.
A functional central limit theorem for both processes will be formulated in Section~\ref{sec:HT}.
In addition, we will consider the H\'ajek empirical c.d.f.:
\begin{equation}
\label{eq:Hajek cdf}
\FHJ(t)
=
\frac{1}{\widehat{N}}
\sum_{i=1}^N
\frac{\xi_i\mathds{1}_{\{Y_i\leq t\}}}{\pi_i},
\quad
t\in \mathbb{R},
\end{equation}
where $\widehat{N}=\sum_{i=1}^N \xi_i/\pi_i$ is the HT estimator for the population total $N$.
Functional central limit theorems for $\sqrt{n}(\FHJ-\FN)$ and $\sqrt{n}(\FHJ-F)$ will be provided in Section~\ref{sec:Hajek}.
The advantage of our results is that they allow general {{single-stage} unequal probability} sampling schemes
and that we primarily require bounds on the rate at which higher order correlations
tend to zero $\omega$-almost surely, under the design measure $\mathbb{P}_d$.
\section{FCLT's for the Horvitz-Thompson empirical processes}
\label{sec:HT}
A functional central limit theorem for
$\sqrt{n}(\FHT-\FN)$ and $\sqrt{n}(\FHT-F)$ is obtained
by proving weak convergence of all finite dimensional distributions and tightness.
In order to establish the latter for general {{single-stage} unequal probability} sampling schemes, we impose a number of conditions that involve the sets
\begin{equation}
\label{eq:def DtN}
\begin{split}
D_{\nu,N}=
\Big\{
(i_1,i_2,\ldots,i_\nu)\in \{1, 2, \dots, N\}^\nu:
&\,
i_1,i_2,\ldots,i_\nu\text{ all different}
\Big\},
\end{split}
\end{equation}
for the integers $1\leq \nu\leq 4$.
We assume the following conditions:
%\begin{quote}
\begin{itemize}
  \item[(C1)]
  there exist constants $K_1,K_2$, such that for all $i=1,2,\ldots,N$,
  \[
  0<K_1\leq \frac{N\pi_i}n\leq K_2<\infty,
  \quad
  \omega-\text{a.s.}
  \]
\end{itemize}
{
The upper bound in~(C1), which expresses the fact that the~$\pi_i$ may not be too large, is related to convergence of $n/N$.
The reason is that $N\pi_i/n\leq N/n$, so that an upper bound on~$N\pi_i/n$ is immediate if one requires $n/N\to\lambda>0$.
This {last} condition is imposed by many authors, e.g., see~\cite{Bertail_2013}, \cite{BreidtOpsomer_2000}, \cite{conti2014}, \cite{conti2015},
among others.
The upper bound in our condition~(C1) enables us to allow $n/N\to0$.
The lower bound in~(C1) expresses the fact that $\pi_i$ may not be too small.
Sometimes this is taken care of by imposing $\pi_i\geq \pi^*>0$, see for instance~\cite{Bertail_2013}, \cite{BreidtOpsomer_2000}.
It can be seen that conditions A3-A4 in~\cite{conti2015} imply the lower bound in~(C1).
Details~\label{page:C1} can be found in the supplement B~\cite{Boistard_2015}.
}

There exists a constant $K_3>0$, such that for all $N=1,2,\ldots$:
\begin{itemize}
  \item[(C2)]
  $\max_{(i,j)\in D_{2,N}} \Big|\mathbb{E}_d(\xi_i-\pi_i)(\xi_j-\pi_j)\Big|<K_3n/N^2$,
  \item[(C3)]
  $\max_{(i,j,k)\in D_{3,N}} \Big|\mathbb{E}_d(\xi_i-\pi_i)(\xi_j-\pi_j)(\xi_k-\pi_k)\Big|<K_3n^2/N^3$,
  \item[(C4)]
  $\max_{(i,j,k,l)\in D_{4,N}} \Big|\mathbb{E}_d(\xi_i-\pi_i)(\xi_j-\pi_j)(\xi_k-\pi_k)(\xi_l-\pi_l)\Big|<K_3n^2/N^4$,
\end{itemize}
$\omega$-almost surely.
%\end{quote}
These conditions on higher order correlations are commonly used in the literature on survey sampling in order to derive asymptotic properties of
estimators (e.g.,
see~\cite{BreidtOpsomer_2000},
and~\cite{Cardot_2010}).
\cite{BreidtOpsomer_2000} proved that they hold for simple random sampling
without replacement and stratified simple random sampling without replacement,
whereas~\cite{Boistard_2012} proved that they hold also for rejective sampling.
Lemma~2 from~\cite{Boistard_2012} allows us to reformulate the above conditions on higher order correlations
into conditions on higher order inclusion probabilities.

{Conditions~(C2)-(C4) are primarily used to establish tightness of the random processes involved.
{These conditions have been formulated as such, because they are compactly expressed in terms of higher order correlations.}
Nevertheless, as one of the referees pointed out, bounds on maximum correlations may be somewhat restrictive,
and bounds on the average correlation are perhaps more desirable.
For fixed size sampling designs with inclusion probabilities not depending on $\omega$, this can be accomplished
by adapting the tightness proof, see Section~\ref{subsec:fixed designs}.
Conditions (C2)-(C4) can be simplified enormously when we consider the class of high entropy sampling designs, see~\cite{berger1998a,berger1998b,conti2014,conti2015}.
In this case, conditions on the rate at which $\sum_{i=1}^N\pi_i(1-\pi_i)$ tends to infinity compared to~$N$ and~$n$ are sufficient
for (C2)-(C4), see Section~\ref{subsec:high entropy}.
}

To establish the convergence of finite dimensional distributions, for sequences of bounded i.i.d.~random variables $V_1,V_2,\ldots$ on $(\Omega,\mathfrak{\mathfrak{F}},\mathbb{P}_m)$,
we will need a CLT for the HT estimator in the design space, conditionally on the~$V_i$'s.
To this end, let $S_N^2$ be the (design-based) variance of the HT estimator of the population mean, i.e.,
\begin{equation}
\label{def:variance HT}
S_N^2
=
\frac{1}{N^2}
\sum_{i=1}^N\sum_{j=1}^N
\frac{\pi_{ij}-\pi_i\pi_j}{\pi_i\pi_j}
V_iV_j.
\end{equation}
We assume that
%\begin{quote}
\begin{itemize}
\item[(HT1)]
{Let $V_1,V_2,\ldots$ be a sequence of bounded i.i.d.~random variables, not identical to zero, and
such there exists an $M>0$, such that $|V_i|\leq M$ $\omega$-almost surely, for all $i=1,2,\ldots$.
Suppose that for $N$ sufficiently large, $S_N>0$ and}
\[
\frac1{S_N}
%\sqrt{n}
\left(
\frac{1}{N}
\sum_{i=1}^N
\frac{\xi_iV_i}{\pi_i}
-
\frac{1}{N}
\sum_{i=1}^N
V_i
\right)
%\stackrel{d}{\to}
\to
N(0,1),
\qquad
\omega-\text{a.s.},
\]
in distribution under $\mathbb{P}_d$.
\end{itemize}
Note that (HT1) holds for simple random sampling without replacement
if $n(N-n)/N$ tends to infinity when $N$ tends to infinity (see \cite{Thompson_1997}),
as well as for Poisson sampling under some conditions on the first order inclusion probabilities (e.g., see~\cite{Fuller_2009}).
{
For rejective sampling, \cite{Hajek_1964} gives a somewhat technical condition that is sufficient and necessary for (HT1).
Other references are~\cite{visek1979}, \cite{praskovasen2009}, among others.
In~\cite{berger1998b} the CLT is extended to high entropy sampling designs.
For this class of sampling designs, simple conditions can be formulated that are sufficient for (HT1),
see Proposition~\ref{prop:HT1} in Section~\ref{subsec:high entropy}.
}

We also need that $nS^2_N$ converges for the particular case where the $V_i$'s are random vectors
consisting of indicators $\mathds{1}_{\{Y_j\leq t\}}$.
\begin{itemize}
\item[(HT2)]
For $k\in\{1,2,\ldots\}$, $i=1,2,\ldots,k$ and $t_1,t_2,\ldots,t_k\in \mathbb{R}$,
define
$\mathbf{Y}_{ik}^t=\left(\mathds{1}_{\{Y_i\leq t_1\}},\ldots,\mathds{1}_{\{Y_i\leq t_k\}}\right)$.
There exists a deterministic matrix~$\mathbf{\Sigma}_k^{\mathrm{HT}}$, such that
\begin{equation}
\label{eq:HT2 alternative}
\lim_{N\to\infty}
\frac{n}{N^2}
\sum_{i=1}^N\sum_{j=1}^N
\frac{\pi_{ij}-\pi_i\pi_j}{\pi_i\pi_j}
\mathbf{Y}_{ik}\mathbf{Y}_{jk}^t
=
\mathbf{\Sigma}_k^{\mathrm{HT}},
\quad
\omega-\text{a.s.}
\end{equation}
\end{itemize}
This kind of assumption is quite standard in the literature on survey sampling
and is usually imposed for general random vectors (see, for example
\cite{Deville_1992}, p.379,
\cite{Francisco_1991}, condition~3 on page 457,
or~\cite{Krewski_1981}, condition C4 on page 1014).
It suffices to require~\eqref{eq:HT2 alternative} for $\mathbf{Y}_{ik}^t=\left(\mathds{1}_{\{Y_i\leq t_1\}},\ldots,\mathds{1}_{\{Y_i\leq t_k\}}\right)$.
Moreover, if (C1)-(C2) hold, then the sequence in~\eqref{eq:HT2 alternative} is bounded,
so that by dominated convergence it follows that
\begin{equation}
\label{eq:HT2 alternative moment}
\mathbf{\Sigma}_k^{\mathrm{HT}}
=
\lim_{N\to\infty}
\frac{1}{N^2}
\sum_{i=1}^N\sum_{j=1}^N
\mathbb{E}_m
\left[
n
\frac{\pi_{ij}-\pi_i\pi_j}{\pi_i\pi_j}
\mathbf{Y}_{ik}\mathbf{Y}_{jk}^t
\right].
\end{equation}
This might help to get a more tractable expression for $\mathbf{\Sigma}_k^{\mathrm{HT}}$.

We are now able to formulate our first main result.
Let $D(\mathbb{R})$ be the space of c\`adl\`ag functions on $\mathbb{R}$ equipped with the Skorohod topology.
\begin{theorem}
\label{th:FCLT HT FN}
Let $Y_1,\ldots,Y_N$ be i.i.d.~random variables with c.d.f.~$F$ and empirical c.d.f.~$\FN$
and let $\FHT$ be defined in~\eqref{eq:HT cdf}.
Suppose that conditions (C1)-(C4) and (HT1)-(HT2) hold.
Then $\sqrt{n}(\FHT-\FN)$ converges weakly in $D(\mathbb{R})$ to
a mean zero Gaussian process $\mathbb{G}^{\mathrm{HT}}$ with covariance function
\[
\mathbb{E}_{m}\mathbb{G}^{\mathrm{HT}}(s)\mathbb{G}^{\mathrm{HT}}(t)
=
\lim_{N\to\infty}
\frac{1}{N^2}
\sum_{i=1}^N\sum_{j=1}^N
\mathbb{E}_{m}
\left[
n
\frac{\pi_{ij}-\pi_i\pi_j}{\pi_i\pi_j}
\mathds{1}_{\{Y_i\leq s\}}\mathds{1}_{\{Y_j\leq t\}}
\right]
\]
for $s,t\in \mathbb{R}$.
\end{theorem}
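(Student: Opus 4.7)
The plan is to prove weak convergence in $D(\mathbb{R})$ by the standard two-step procedure: joint convergence of the finite-dimensional distributions of
$X_N:=\sqrt{n}(\FHT-\FN)$ together with tightness. Writing
\[
X_N(t)=\frac{\sqrt{n}}{N}\sum_{i=1}^N\frac{\xi_i-\pi_i}{\pi_i}\mathds{1}_{\{Y_i\le t\}},
\]
we see $X_N$ is a centered c\`adl\`ag process whose fluctuations come both from the design (through $\xi_i-\pi_i$) and from the super-population (through the $Y_i$ and, possibly, $\pi_i$).

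For the finite-dimensional distributions I would apply the Cram\'er--Wold device. Fix $t_1,\ldots,t_k\in\mathbb{R}$ and reals $\lambda_1,\ldots,\lambda_k$, and set $V_i=\sum_{j=1}^k\lambda_j\mathds{1}_{\{Y_i\le t_j\}}$, which is bounded and i.i.d.\ under $\mathbb{P}_m$. Then
\[
\sum_{j=1}^k\lambda_j X_N(t_j)=\frac{\sqrt{n}}{N}\sum_{i=1}^N\frac{\xi_i-\pi_i}{\pi_i}V_i.
\]
By (HT1) applied conditionally on $\omega$, this quantity divided by $\sqrt{n}\,S_N$ converges in distribution under $\mathbb{P}_d$ to $N(0,1)$ for $\mathbb{P}_m$-a.e.\ $\omega$. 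Condition (HT2), combined with (C1)--(C2) to supply an integrable envelope for dominated convergence, shows that $nS_N^2\to\lambda^{\top}\mathbf{\Sigma}_k^{\mathrm{HT}}\lambda$, $\omega$-a.s. Slutsky's lemma then yields conditional convergence of the linear combination to a centered Gaussian with variance $\lambda^{\top}\mathbf{\Sigma}_k^{\mathrm{HT}}\lambda$, and joint convergence under $\mathbb{P}_{d,m}$ follows by taking characteristic functions and integrating against $\mathbb{P}_m$ via bounded convergence.

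For tightness I would verify a Billingsley/Bickel--Wichura type fourth-moment bound: there exist a constant $C$ and a continuous nondecreasing function $H$ (naturally $H=F$) such that, for all $s<t$ and all $N$,
\[
\mathbb{E}_{d,m}|X_N(t)-X_N(s)|^4\le C\bigl(H(t)-H(s)\bigr)^2.
\]
Expanding the fourth power produces a four-fold sum whose generic term is
\[
\frac{n^2}{N^4}\,\mathbb{E}_{d,m}\!\left[\frac{\prod_{\ell=1}^4(\xi_{i_\ell}-\pi_{i_\ell})\,\mathds{1}_{\{s<Y_{i_\ell}\le t\}}}{\prod_{\ell=1}^4\pi_{i_\ell}}\right],
\]
which I would decompose according to the multiplicity pattern of $(i_1,i_2,i_3,i_4)$. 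The pattern with four distinct indices is controlled by (C4); the pattern with one repeated pair and two distinct indices reduces, via $(\xi_i-\pi_i)^2=\pi_i(1-\pi_i)+(1-2\pi_i)(\xi_i-\pi_i)$, to combinations handled by (C2) and (C3); the two-pair pattern is handled by (C2); and the remaining degenerate patterns are bounded trivially using $|\xi_i-\pi_i|\le 1$. Using (C1) to replace $1/\pi_i$ by $O(N/n)$ and then taking $\mathbb{E}_m$ of the products of indicators, which contributes $(F(t)-F(s))^{\#\{\text{distinct indices}\}}$, yields the desired $O((F(t)-F(s))^2)$ estimate. Tightness on all of $\mathbb{R}$ follows by a standard time-change argument (transferring to $[0,1]$ via~$F$, or controlling the tails directly by the same fourth-moment inequality).

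The main obstacle is the combinatorial bookkeeping in the fourth-moment expansion: one must check that for every multiplicity pattern the $\pi_i$-denominators, the design-based higher-order correlation bounds supplied by (C2)--(C4), and the $\mathbb{E}_m$-powers of $(F(t)-F(s))$ conspire to produce exactly $O((F(t)-F(s))^2)$, uniformly in $N$. A secondary but delicate point is the transfer from the conditional-on-$\omega$ design-based CLT of (HT1) to joint weak convergence under $\mathbb{P}_{d,m}$: the randomness of $\pi_i$, $n$ and $S_N$ under $\mathbb{P}_m$ must be controlled, which is precisely where the $\omega$-a.s.\ formulation of (C1)--(C4) and (HT2) is used.
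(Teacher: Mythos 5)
Your finite-dimensional argument matches the paper's (Cram\'er--Wold, rewrite the linear combination as a Horvitz--Thompson estimator of $V_i=\sum_j\lambda_j\mathds{1}_{\{Y_i\le t_j\}}$, use (HT2) for the limit of $nS_N^2$ and (HT1) plus integration of the conditional characteristic function for the joint limit), and the quantile-transform reduction to uniform $Y_i$'s is also how the paper proceeds. The gap is in the tightness step: the single-increment moment inequality
\[
\mathbb{E}_{d,m}\left|X_N(t)-X_N(s)\right|^4\le C\bigl(F(t)-F(s)\bigr)^2
\]
is false for this process, for the same reason it fails for the classical empirical process. The fully diagonal pattern $i_1=i_2=i_3=i_4$ contributes
\[
\frac{n^2}{N^4}\sum_{i=1}^N\frac{\mathbb{E}_d(\xi_i-\pi_i)^4}{\pi_i^4}\,\mathbb{P}_m(s<Y_i\le t)
\asymp\frac{F(t)-F(s)}{n},
\]
since $\mathbb{E}_d(\xi_i-\pi_i)^4\le\pi_i=O(n/N)$ and $1/\pi_i^4=O(N^4/n^4)$ by (C1). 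This is linear, not quadratic, in the increment, so it cannot be absorbed into $C(F(t)-F(s))^2$ when $F(t)-F(s)\ll 1/n$; ``bounding the degenerate patterns trivially'' does not rescue it, and no continuous nondecreasing $H$ can either, because the offending term has exponent one in $H(t)-H(s)$.

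The paper avoids this by verifying Billingsley's condition (13.14) instead: for $t_1\le t\le t_2$,
\[
\mathbb{E}_{d,m}\left[\bigl(X_N(t)-X_N(t_1)\bigr)^2\bigl(X_N(t_2)-X_N(t)\bigr)^2\right]\le K\bigl(F(t_2)-F(t_1)\bigr)^2.
\]
Here the two factors involve the disjoint events $\{t_1<Y_i\le t\}$ and $\{t<Y_i\le t_2\}$, so every term of the expansion in which an index appears in both factors vanishes identically; only configurations with $\{i,j\}\cap\{k,l\}=\varnothing$ survive, and for those the $\mathbb{E}_m$-expectation of the indicator products is $p_1^ap_2^b$ with $a,b\ge1$, hence at most $p_1p_2\le(F(t_2)-F(t_1))^2$, while (C1)--(C4) control the design factors exactly as in your multiplicity-pattern bookkeeping. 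If you replace your Kolmogorov-type criterion by this adjacent-increment product criterion, the rest of your scheme (splitting by multiplicity pattern, reducing $(\xi_i-\pi_i)^2$ via $\pi_i(1-\pi_i)+(1-2\pi_i)(\xi_i-\pi_i)$, and invoking (C2)--(C4)) goes through and coincides with the paper's tightness lemma.
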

Note that Theorem~\ref{th:FCLT HT FN} allows a random (design-based) expected sample size $n$ and random inclusion probabilities.
The expression of the covariance function of the limiting Gaussian process is somewhat unsatisfactory.
When $n$ and the inclusion probabilities are deterministic, we can obtain a functional CLT with
a more precise expression
for $\mathbb{E}_{m}\mathbb{G}^{\mathrm{HT}}(s)\mathbb{G}^{\mathrm{HT}}(t)$ under slightly weaker conditions.
This is formulated in the proposition below.
Note that with imposing conditions (i)-(ii) in Proposition~\ref{prop:FCLT HT FN} instead of~\eqref{eq:HT2 alternative},
convergence of $nS_N^2$ is not necessarily guaranteed.
However, this is established in Lemma~\ref{lem:variance HT}
in~\cite{Boistard_2015} under (C1) and (C2).
Finally, we like to emphasize that if we would have imposed (HT2) for \emph{any} sequence $\mathbf{Y}_1,\mathbf{Y}_2,\ldots$ of bounded random vectors,
then (HT2) would have implied conditions (i)-(ii) in the deterministic setup of Proposition~\ref{prop:FCLT HT FN}.
\begin{proposition}
\label{prop:FCLT HT FN}
Consider the setting of Theorem~\ref{th:FCLT HT FN}, where $n$ and $\pi_i,\pi_{ij}$, for $i,j=1,2,\ldots,N$, are deterministic.
Suppose that (C1)-(C4) and~(HT1) hold, but instead of (HT2) assume that
there exist constants $\mu_{\pi1}$, $\mu_{\pi2} \in \mathbb{R}$ such that
\begin{itemize}
\item[(i)]
$\displaystyle
\lim_{N\to\infty}
\frac{n}{N^2}
\sum_{i=1}^N
\left(
\frac{1}{\pi_i}-1
\right)
=
\mu_{\pi1}$,
\item[(ii)]
$\displaystyle
\lim_{N\to\infty}
\frac{n}{N^2}
\ssum_{i\ne j}
\frac{\pi_{ij}-\pi_i\pi_j}{\pi_i\pi_j}
=
\mu_{\pi2}$.
\end{itemize}
Then $\sqrt{n}(\FHT-\FN)$ converges weakly in $D(\mathbb{R})$ to
a mean zero Gaussian process $\mathbb{G}^{\mathrm{HT}}$ with covariance function
$%\mathbb{E}_{m}\mathbb{G}^{\mathrm{HT}}(s)\mathbb{G}^{\mathrm{HT}}(t)=
\mu_{\pi_1}F(s\wedge t)+\mu_{\pi2}F(s)F(t)$,
for
$s,t\in \mathbb{R}$.
\end{proposition}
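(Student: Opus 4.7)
The plan is to deduce Proposition~\ref{prop:FCLT HT FN} from Theorem~\ref{th:FCLT HT FN} by verifying that, in the deterministic-$(\pi_i,\pi_{ij})$ setting with i.i.d.~$Y_i$'s, conditions~(i)-(ii) together with (C1)-(C2) imply condition~(HT2), and that the resulting limit matrix has entries $\mathbf{\Sigma}_k^{\mathrm{HT}}[\ell,m]=\mu_{\pi 1}F(t_\ell\wedge t_m)+\mu_{\pi 2}F(t_\ell)F(t_m)$. Tightness and (HT1) are already granted by hypothesis, so Theorem~\ref{th:FCLT HT FN} then yields the claimed weak convergence with the announced covariance function.

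To check (HT2) at a generic pair $(s,t)$ chosen from $\{t_1,\ldots,t_k\}$, I would split the double sum into its diagonal and off-diagonal contributions,
\[
\frac{n}{N^2}\sum_{i=1}^N \frac{1-\pi_i}{\pi_i}\mathds{1}_{\{Y_i\leq s\wedge t\}}
+\frac{n}{N^2}\ssum_{i\ne j}\frac{\pi_{ij}-\pi_i\pi_j}{\pi_i\pi_j}\mathds{1}_{\{Y_i\leq s\}}\mathds{1}_{\{Y_j\leq t\}}
=: A_N+B_N,
\]
and analyse them separately. In $A_N$ the deterministic weights $w_i^{(N)}=(n/N^2)(1/\pi_i-1)$ are uniformly of size $O(1/N)$ thanks to (C1), while $\sum_i w_i^{(N)}\to\mu_{\pi 1}$ by~(i); a weighted strong law for i.i.d.~bounded summands (or Hoeffding's inequality combined with Borel--Cantelli) then gives $A_N\to\mu_{\pi 1}F(s\wedge t)$, $\omega$-a.s. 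For $B_N$, (C1)-(C2) yield $|c_{ij}^{(N)}|=(n/N^2)|(\pi_{ij}-\pi_i\pi_j)/(\pi_i\pi_j)|=O(1/N^2)$, while~(ii) gives $\ssum_{i\ne j}c_{ij}^{(N)}\to\mu_{\pi 2}$; by independence of the $Y_i$, $\mathbb{E}_m B_N\to \mu_{\pi 2}F(s)F(t)$, and a direct fourth-moment expansion in which only quadruples $(i,j,k,l)$ with at least one coincidence survive gives $\mathrm{Var}_m(B_N)=O(1/N)$. Upgrading this $L^2$-convergence to $\omega$-a.s.\ convergence along a countable dense set of $(s,t)$ and passing to right-continuous limits then delivers~(HT2) for $\mathbf{Y}_{ik}^t$ with the announced matrix.

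The main obstacle is the last step: upgrading the $L^2$-convergence of $B_N$ to almost sure convergence, since $B_N$ is a quadratic form in non-independent indicators and no direct strong law is available. A clean route is a moment bound: using $|c_{ij}^{(N)}|=O(1/N^{2})$ and independence of the $Y_i$, one can verify $\mathbb{E}_m|B_N-\mathbb{E}_m B_N|^{2p}=O(N^{-p})$ for $p$ large enough by carefully enumerating the index patterns that contribute to the expansion, and then invoke Borel--Cantelli. Once (HT2) holds with the explicit $\mathbf{\Sigma}_k^{\mathrm{HT}}$ above, the finite-dimensional convergence is obtained through Cram\'er--Wold and (HT1) applied to $V_i=\sum_\ell a_\ell\mathds{1}_{\{Y_i\leq t_\ell\}}$, while tightness follows from (C1)-(C4) exactly as in the proof of Theorem~\ref{th:FCLT HT FN}.
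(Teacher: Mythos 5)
Your proposal is correct, but it follows a genuinely different route from the paper's. You deduce the proposition by \emph{verifying} (HT2) for the indicator vectors: you show that (i)--(ii) together with (C1)--(C2) force the quadratic form $\frac{n}{N^2}\sum_i\sum_j\frac{\pi_{ij}-\pi_i\pi_j}{\pi_i\pi_j}\mathds{1}_{\{Y_i\leq s\}}\mathds{1}_{\{Y_j\leq t\}}$ to converge $\omega$-almost surely to $\mu_{\pi1}F(s\wedge t)+\mu_{\pi2}F(s)F(t)$, and then invoke Theorem~\ref{th:FCLT HT FN} as a black box. The two quantitative ingredients you need both check out: (C1)--(C2) do give $|c_{ij}^{(N)}|=O(N^{-2})$ and $|w_i^{(N)}|=O(N^{-1})$, and the $2p$-th moment bound $\mathbb{E}_m|B_N-\mathbb{E}_mB_N|^{2p}=O(N^{-p})$ is attainable (each connected component of the multigraph of index pairs must carry at least two edges, so at most $3p$ free vertices against a weight of $N^{-4p}$), which with $p\geq2$ and Borel--Cantelli yields the almost sure convergence; the detour through a countable dense set of $(s,t)$ is not even needed, since (HT2) only asks for convergence at each fixed finite collection $t_1,\ldots,t_k$. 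The paper instead does \emph{not} verify (HT2): it reruns only the finite-dimensional-distribution step, replacing the almost sure convergence of $nS_N^2$ by convergence in $\mathbb{P}_m$-probability, which it obtains from a single second-moment (Chebyshev-type) computation in Lemma~\ref{lem:variance HT}, and then combines this with the $\omega$-a.s.\ conditional CLT of (HT1) via Theorem~5.1(iii) of~\cite{Rubin-Bleuer_Kratina_2005} and Cram\'er--Wold; tightness is untouched. What your approach buys is conceptual economy --- the theorem is reused wholesale and no separate fidi lemma is needed --- at the price of a heavier moment computation; what the paper's approach buys is that a variance bound suffices, since convergence in probability of the design variance is all the product-space CLT machinery actually requires.
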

{Conditions (i)-(ii) ensure that $nS_N^2$ converges to a finite limit (see Lemma~\ref{lem:variance HT} in~\cite{Boistard_2015}),
from which the limiting covariance structure in Proposition~\ref{prop:FCLT HT FN} can be derived.
Condition~(i) also appears in~\cite{conti2014}.
Conditions similar to~(ii) appear in~\cite{isakifuller1982}, \cite{bergerskinner2005}, and~\cite{escobarberger2013}.}
When $n/N\to\lambda\in[0,1]$, then conditions~(i)-(ii) hold with $\mu_{\pi1}=1-\lambda$ and
$\mu_{\pi2}=\lambda-1$ for simple random sampling without replacement.
For Poisson sampling, (ii) holds trivially because the trials are independent.
For rejective sampling, (i)-(ii) together with $n/N\to\lambda\in[0,1]$,
can be deduced from the associated Poisson sampling design.
Indeed, suppose that (i) holds for Poisson sampling with first order inclusion probabilities $p_1,\ldots,p_N$,
such that $\sum_{i=1}^Np_i=n$.
Then, from Theorem~1 in~\cite{Boistard_2012} it follows that if $d=\sum_{i=1}^N p_i(1-p_i)$ tends to infinity,
assumption (i) holds for rejective sampling.
Furthermore, if $n/N\to\lambda\in[0,1]$ and $N/d$ has a finite limit, then also (ii) holds for rejective sampling.

Weak convergence of the process $\sqrt{n}(\FHT-F)$, where we center with $F$ instead of $\FN$,
requires a CLT in the super-population space for
\begin{equation}
\label{eq:HT estimator standardized}
\sqrt{n}
\left(
\frac1N
\sum_{i=1}^N
\frac{\xi_iV_i}{\pi_i}
-
\mu_V
\right),
\qquad
\text{where }\mu_V=\mathbb{E}_m(V_i),
\end{equation}
for sequences of bounded i.i.d.~random variables $V_1,V_2,\ldots$ on $(\Omega,\mathfrak{\mathfrak{F}},\mathbb{P}_m)$.
Our approach to establish asymptotic normality of~\eqref{eq:HT estimator standardized} is then to decompose as follows
\begin{equation}
\label{eq:decomposition CLT}
\begin{split}
&
\sqrt{n}
\left(
\frac{1}{N}
\sum_{i=1}^N
\frac{\xi_iV_i}{\pi_i}
-
\mu_{V}
\right)\\
&=
\sqrt{n}
\left(
\frac{1}{N}
\sum_{i=1}^N
\frac{\xi_iV_i}{\pi_i}
-
\frac{1}{N}
\sum_{i=1}^N
V_i
\right)
+
\frac{\sqrt{n}}{\sqrt{N}}
\times
\sqrt{N}
\left(
\frac{1}{N}
\sum_{i=1}^N
V_i
-
\mu_{V}
\right).
\end{split}
\end{equation}
Since the $V_i$'s are i.i.d.~and bounded, for the second term on the right hand side, by the traditional CLT we immediately obtain
\begin{equation}
\label{eq:CLT}
\sqrt{N}
\left(
\frac{1}{N}
\sum_{i=1}^N
V_i
-
\mu_{V}
\right)
%\stackrel{\mathcal{D}_m}{\to}%
\to
N(0,\sigma^2_V),
\end{equation}
in distribution under $\mathbb{P}_m$,
where $\sigma_V^2$ denotes the variance of the $V_i$'s,
whereas the first term on the right hand side can be handled with (HT1).
\cite{BreslowWellner_2007} and~\cite{Saegusa_2013} use a decomposition similar to the one in~\eqref{eq:decomposition CLT}.
Their approach assumes exchangeable $\xi_i$'s and equal inclusion probabilities $n/N$, which allows the use of results
on exchangeable weighted bootstrap to handle the first term on the right hand side of~\eqref{eq:decomposition CLT}.
Instead, we only require conditions (C2)-(C4) on higher order correlations for the $\xi_i$'s and allow the $\pi_i$'s
to vary within certain bounds as described in~(C1).
To combine the two separate limits in~\eqref{eq:CLT} and~(HT1), we will need
\begin{itemize}
\item[(HT3)]
$n/N\to\lambda\in[0,1]$, $\omega$-a.s.
\end{itemize}
{
One often assumes $\lambda\in(0,1)$ (e.g., see~\cite{Bertail_2013}, \cite{BreidtOpsomer_2000}, \cite{conti2014}, \cite{conti2015},
among others).
We like to emphasize that convergence of $n/N$ was not needed so far in our setup, because condition (C1) is used to control terms $1/\pi_i$.
To determine the precise limit for~\eqref{eq:decomposition CLT} we do need (HT3), but we allow $\lambda=0$ or $\lambda=1$.}

{Next, we will use} Theorem~5.1(iii) from~\cite{Rubin-Bleuer_Kratina_2005}.
The finite dimensional projections of the processes involved turn out to be related to a particular HT estimator.
In order to have the corresponding design-based variance converging to a strictly positive constant, we need the following condition.
\begin{itemize}
\item[(HT4)]
For all $k\in \{1,2,\ldots\}$ and $t_1,t_2,\ldots,t_k\in \mathbb{R}$,
the matrix $\mathbf{\Sigma}_k^{\mathrm{HT}}$ in~\eqref{eq:HT2 alternative} is positive definite.
\end{itemize}
We are now able to formulate our second main result.
\begin{theorem}
\label{th:FCLT HT F}
Let $Y_1,\ldots,Y_N$ be i.i.d.~random variables met c.d.f.~$F$ and let $\FHT$ be defined in~\eqref{eq:HT cdf}.
Suppose that conditions (C1)-(C4) and (HT1)-(HT4) hold.
Then $\sqrt{n}(\FHT-F)$ converges weakly in $D(\mathbb{R})$ to
a mean zero Gaussian process $\mathbb{G}_F^{\mathrm{HT}}$ with covariance function
$\mathbb{E}_{d,m}\mathbb{G}_F^{\mathrm{HT}}(s)\mathbb{G}_F^{\mathrm{HT}}(t)$ given by
\[
\lim_{N\to\infty}
\frac{1}{N^2}
\sum_{i=1}^N\sum_{j=1}^N
\mathbb{E}_{m}
\left[
n
\frac{\pi_{ij}-\pi_i\pi_j}{\pi_i\pi_j}
\mathds{1}_{\{Y_i\leq s\}}\mathds{1}_{\{Y_j\leq t\}}
\right]
+
\lambda
\big\{
F(s\wedge t)
-
F(s)F(t)
\big\},
\]
for $s,t\in \mathbb{R}$.
\end{theorem}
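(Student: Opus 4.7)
The plan is to build on Theorem~\ref{th:FCLT HT FN} via the decomposition
\[
\sqrt{n}(\FHT - F) \;=\; \sqrt{n}(\FHT - \FN) \;+\; \sqrt{n/N}\cdot\sqrt{N}(\FN - F),
\]
combining the two limits on the product probability space using the framework of~\cite{Rubin-Bleuer_Kratina_2005}. For the second summand, since the $Y_i$ are i.i.d.\ under $\mathbb{P}_m$, Donsker's theorem yields weak convergence of $\sqrt{N}(\FN-F)$ in $D(\mathbb{R})$ to the $F$-Brownian bridge $\mathbb{B}_F$ with covariance $F(s\wedge t)-F(s)F(t)$; combined with~(HT3) and Slutsky, the second summand converges weakly to $\sqrt{\lambda}\,\mathbb{B}_F$.

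For the first summand, Theorem~\ref{th:FCLT HT FN} already gives weak convergence to the Gaussian process $\mathbb{G}^{\mathrm{HT}}$. Inspecting its proof, this convergence is in fact established first conditionally on $\omega$ in the design space, using~(HT1) applied to $V_i=\mathds{1}_{\{Y_i\leq t\}}$ and~(HT2), and then transferred to the product space. Thus, for almost every $\omega$, the finite-dimensional distributions of $\sqrt{n}(\FHT-\FN)$ converge under $\mathbb{P}_d$ to a centered Gaussian vector with deterministic covariance matrix $\SigmaHTFN$ from~(HT2), while the second summand depends only on $\omega$ and not on the sample.

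The key step is to combine these two limits into joint weak convergence on $(\mathcal{S}_N\times\Omega,\mathfrak{A}_N\times\mathfrak{F},\mathbb{P}_{d,m})$. Theorem~5.1(iii) from~\cite{Rubin-Bleuer_Kratina_2005} is tailored to exactly this setting: it lifts a conditional design-space CLT that holds $\mathbb{P}_m$-almost surely, paired with a super-population CLT for a term that is measurable with respect to $\mathfrak{F}$ alone, into a joint CLT under $\mathbb{P}_{d,m}$ with independent limiting components. Condition~(HT4) ensures that the limit covariance is nondegenerate, so that the characteristic function argument underlying that theorem applies. Because the second summand is $\mathfrak{F}$-measurable and the design-based limiting covariance is deterministic, the two contributions end up independent in the limit, and finite-dimensional projections of $\sqrt{n}(\FHT-F)$ converge to the sum of an independent centered Gaussian vector with covariance given by Theorem~\ref{th:FCLT HT FN} and $\sqrt{\lambda}\,\mathbb{B}_F$ evaluated at the same points. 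Adding the two covariances reproduces the expression in the statement.

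Tightness of the sum in $D(\mathbb{R})$ is immediate once each summand is tight: the first from Theorem~\ref{th:FCLT HT FN}, the second from Donsker's theorem combined with boundedness of $\sqrt{n/N}$ implied by~(HT3). The main obstacle is the joint-convergence step. A plain Slutsky argument does not suffice, since the design-space CLT for the first summand is only $\mathbb{P}_m$-almost sure and the two random quantities live on different coordinate factors of the product space. The bulk of the work therefore consists in verifying the hypotheses of Theorem~5.1(iii) in~\cite{Rubin-Bleuer_Kratina_2005}, namely that the conditional characteristic function of the finite-dimensional projections of the first summand converges $\mathbb{P}_m$-almost surely to the deterministic characteristic function of the corresponding Gaussian vector---a fact that is already a by-product of the proof of Theorem~\ref{th:FCLT HT FN}.
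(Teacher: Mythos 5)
Your proposal follows essentially the same route as the paper: the decomposition $\sqrt{n}(\FHT-F)=\sqrt{n}(\FHT-\FN)+\sqrt{n/N}\cdot\sqrt{N}(\FN-F)$, Donsker plus (HT3) for the second summand, and Theorem~5.1(iii) of \cite{Rubin-Bleuer_Kratina_2005} to merge the $\omega$-almost-sure design-space CLT with the model-space CLT into a joint limit with asymptotically independent components (the paper packages this via the Cram\'er--Wold device and Lemma~\ref{lem:HT CLT}, with (HT4) guaranteeing the positive design-variance limit needed there). One small correction: tightness of the sum is \emph{not} immediate from tightness of each summand in $D(\mathbb{R})$ with the Skorohod topology; the paper explicitly needs that both summands converge weakly to \emph{continuous} limit processes (Lemma~\ref{lem:tightness sum} in \cite{Boistard_2015}), a fact you have available but do not invoke at that step.
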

Theorem~\ref{th:FCLT HT F} allows random $n$ and inclusion probabilities.

As before, when the sample size $n$ and inclusion probabilities are deterministic
we can obtain a functional CLT under a simpler condition than (HT4) and with a more detailed description of the covariance function
of the limiting process.
\begin{proposition}
\label{prop:FCLT HT F}
Consider the setting of Theorem~\ref{th:FCLT HT F}, where $n$ and $\pi_i,\pi_{ij}$, for $i,j=1,2,\ldots,N$, are deterministic.
Suppose that (C1)-(C4), (HT1)and~(HT3) hold, but instead of (HT2) and (HT4) assume that
there exist constants $\mu_{\pi1}$, $\mu_{\pi2} \in \mathbb{R}$ such that
\begin{itemize}
\item[(i)]
$\displaystyle
\lim_{N\to\infty}
\frac{n}{N^2}
\sum_{i=1}^N
\left(
\frac{1}{\pi_i}-1
\right)
=
\mu_{\pi1}>0$,
\item[(ii)]
$\displaystyle
\lim_{N\to\infty}
\frac{n}{N^2}
\ssum_{i\ne j}
\frac{\pi_{ij}-\pi_i\pi_j}{\pi_i\pi_j}
=
\mu_{\pi2}$.
\end{itemize}
Then $\sqrt{n}(\FHT-F)$ converges weakly in $D(\mathbb{R})$ to
a mean zero Gaussian process $\mathbb{G}^{\mathrm{HT}}$ with covariance function
$%\mathbb{E}_{d,m}\mathbb{G}_F^{\mathrm{HT}}(s)\mathbb{G}_F^{\mathrm{HT}}(t)=
(\mu_{\pi_1}+\lambda)F(s\wedge t)+(\mu_{\pi2}-\lambda)F(s)F(t)$,
for $s,t\in \mathbb{R}$.
\end{proposition}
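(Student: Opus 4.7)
The plan is to mimic the proof strategy used for Theorem~\ref{th:FCLT HT F}, but in the deterministic setup where Proposition~\ref{prop:FCLT HT FN} replaces Theorem~\ref{th:FCLT HT FN}. I would exploit the decomposition
\[
\sqrt{n}\bigl(\FHT-F\bigr)
=\sqrt{n}\bigl(\FHT-\FN\bigr)
+\sqrt{n/N}\cdot \sqrt{N}\bigl(\FN-F\bigr),
\]
valued in $D(\mathbb{R})$, and identify the limit of each term, then combine them through the product-space framework.

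For the first term, Proposition~\ref{prop:FCLT HT FN} applies directly under (C1)-(C4), (HT1), and conditions (i)-(ii): it yields weak convergence to a centered Gaussian process with covariance $\mu_{\pi1}F(s\wedge t)+\mu_{\pi2}F(s)F(t)$. For the second term, the Donsker theorem gives weak convergence of $\sqrt{N}(\FN-F)$ in $D(\mathbb{R})$ under $\mathbb{P}_m$ to an $F$-Brownian bridge $\mathbb{B}_F$, with covariance $F(s\wedge t)-F(s)F(t)$; together with (HT3), the prefactor $\sqrt{n/N}$ converges to $\sqrt{\lambda}$, so this term converges to $\sqrt{\lambda}\,\mathbb{B}_F$ with covariance $\lambda\{F(s\wedge t)-F(s)F(t)\}$.

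The main step, as in the proof of Theorem~\ref{th:FCLT HT F}, is to obtain joint weak convergence under $\mathbb{P}_{d,m}$ of the pair consisting of the two terms above, and to establish that the two limits are independent. For finite-dimensional distributions, given $t_1,\ldots,t_k\in\mathbb{R}$ and coefficients $a_1,\ldots,a_k$, I would apply (HT1) to the bounded i.i.d.\ sequence $V_i=\sum_{\ell=1}^k a_\ell\mathds{1}_{\{Y_i\leq t_\ell\}}$, which yields the conditional asymptotic normality of the first term under $\mathbb{P}_d$, $\omega$-almost surely; condition~(i) with $\mu_{\pi1}>0$ together with (ii) ensures that the rescaled variance $nS_N^2$ converges to a strictly positive limit via Lemma~\ref{lem:variance HT} in~\cite{Boistard_2015}, which plays here the role that (HT4) plays in Theorem~\ref{th:FCLT HT F}. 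Applying Theorem~5.1(iii) in~\cite{Rubin-Bleuer_Kratina_2005}, and combining the conditional design-based CLT with the model-based CLT in~\eqref{eq:CLT}, I obtain joint convergence in the product space to a sum of independent Gaussian vectors, whose covariances then add up to $(\mu_{\pi1}+\lambda)F(s\wedge t)+(\mu_{\pi2}-\lambda)F(s)F(t)$ as claimed.

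Tightness of $\sqrt{n}(\FHT-F)$ in $D(\mathbb{R})$ follows from the tightness of the two summands: the tightness of $\sqrt{n}(\FHT-\FN)$ is already delivered by Proposition~\ref{prop:FCLT HT FN}, whereas $\sqrt{N}(\FN-F)$ is tight by classical Donsker theory, and the scalar factor $\sqrt{n/N}$ is bounded by~(HT3). The main obstacle I expect is the rigorous execution of the joint convergence step: even though the second term depends only on the model randomness and the first term is naturally controlled conditionally on $\mathbf{Y}^N$ under $\mathbb{P}_d$, verifying the hypotheses of Theorem~5.1(iii) in~\cite{Rubin-Bleuer_Kratina_2005} requires checking that the conditional limiting distribution of the first term is the \emph{same} deterministic Gaussian law regardless of the realization of the $Y_i$'s, which in turn relies on the i.i.d.\ assumption on the $Y_i$'s together with conditions (i)-(ii); the rest of the argument amounts to careful bookkeeping of covariances.
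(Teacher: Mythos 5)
Your overall strategy coincides with the paper's: the same decomposition $\sqrt{n}(\FHT-F)=\sqrt{n}(\FHT-\FN)+\sqrt{n/N}\cdot\sqrt{N}(\FN-F)$, tightness of the sum via tightness of each summand (the paper invokes Lemma~\ref{lem:tightness sum} in~\cite{Boistard_2015} for this, since the sum of two tight sequences in $D(\mathbb{R})$ need not be tight unless both limits are continuous), Cram\'er--Wold with $V_{ik}=\sum_\ell a_\ell\mathds{1}_{\{Y_i\leq t_\ell\}}$, Lemma~\ref{lem:HT CLT} resting on Theorem~5.1(iii) of~\cite{Rubin-Bleuer_Kratina_2005}, and the quantile transformation to pass from uniform $Y_i$'s to a general $F$.

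There is, however, a genuine gap in your finite-dimensional step. You assert that condition~(i) with $\mu_{\pi1}>0$ together with (ii) ``ensures that $nS_N^2$ converges to a strictly positive limit.'' Lemma~\ref{lem:variance HT}(ii) only gives $nS_N^2\to\mu_{\pi1}\mathbb{E}_m[V_{1k}^2]+\mu_{\pi2}(\mathbb{E}_m[V_{1k}])^2$, and since $\mu_{\pi2}$ may be negative this limit is not automatically positive: strict positivity is exactly the content of (HT4), which this proposition deliberately drops. The paper fills this in two ways. First, it observes that $\mu_{\pi1}+\mu_{\pi2}=\lim_N (n/N^2)\mathrm{Var}_d(\sum_i\xi_i/\pi_i)\geq 0$, so that Lemma~\ref{lem:positive definite} (with $a=\mu_{\pi1}$, $b=-\mu_{\pi2}$) yields positive definiteness of the limiting covariance matrix whenever the $F(t_i)$'s are distinct and lie in $(0,1)$. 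Second, Lemma~\ref{lem:fidis HT F deterministic} carries out a case analysis for coincident $F(t_i)$'s and for $F(t_1)=0$ or $F(t_k)=1$; in the latter case with $\mu_{\pi1}+\mu_{\pi2}=0$ (which occurs, e.g., for simple random sampling without replacement) the limiting design variance of the corresponding linear combination is genuinely zero, Lemma~\ref{lem:HT CLT} cannot be applied, and one must instead argue directly that $\sqrt{n}(\widehat{N}/N-1)$ converges to zero in probability so that the degenerate marginal is recovered. Without these two ingredients your application of Lemma~\ref{lem:HT CLT} (and hence of Theorem~5.1(iii) of~\cite{Rubin-Bleuer_Kratina_2005}, which requires a nondegenerate, $\omega$-free conditional limit) is not justified for all linear combinations.
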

Since $1/\pi_i\geq 1$, we will always have $\mu_{\pi1}\geq0$ in condition~(i) in Proposition~\ref{prop:FCLT HT F}.
This means that (i) is not very restrictive.
For simple random sampling without replacement, condition (i) requires $\lambda$ to be strictly smaller than one.

{
\begin{remark}[High entropy designs]
\label{rem:high entropy}
Theorems~\ref{th:FCLT HT FN} and~\ref{th:FCLT HT F} include high entropy sampling designs with random inclusion probabilities,
which are considered for instance in~\cite{Bertail_2013} and~\cite{conti2015},
whereas Propositions~\ref{prop:FCLT HT FN} and~\ref{prop:FCLT HT F} include high entropy designs
with deterministic inclusion probabilities, for instance considered in~\cite{conti2014}.
For such designs, the conditions can be simplified considerably, in particular (C2)-(C4),
see Corollary~\ref{cor:high entropy}(i)-(ii) and Corollary~\ref{cor:high entropy deterministic}(i)-(ii) in Section~\ref{subsec:high entropy}.
\end{remark}
}

\section{FCLT's for the H\'ajek empirical processes}
\label{sec:Hajek}
To determine the behavior of the process $\sqrt{n}(\FHJ-\FN)$, it is useful to relate {it}
to the process
\begin{equation}
\label{eq:HT Lumley}
\GL(t)
=
\frac{\sqrt{n}}{N}
\sum_{i=1}^N
\frac{\xi_i}{\pi_i}
\Big(
\mathds{1}_{\{Y_i\leq t\}}-F(t)
\Big).
\end{equation}
We can then write
\begin{equation}
\label{eq:decompose HJ EP}
\sqrt{n}
\left\{
\FHJ(t)-\FN(t)
\right\}
=
\mathbb{Y}_N(t)
+
\left(
\frac{N}{\widehat{N}}-1
\right)\GL(t),
\end{equation}
where
\begin{equation}
\label{def:Y_N}
\mathbb{Y}_N(t)
=
\frac{\sqrt{n}}{N}
\sum_{i=1}^N
\left(
\frac{\xi_i}{\pi_i}-1
\right)
\left(
\mathds{1}_{\{Y_i\leq t\}}-F(t)
\right).
\end{equation}
As intermediate results we will first show that the process $\GL$ converges weakly
to a mean zero Gaussian process
and that $\widehat{N}/N\to 1$ in probability.
As a consequence, the limiting behavior of $\sqrt{n}(\FHJ-\FN)$ will be the same as that of $\mathbb{Y}_N$,
which is an easier process to handle.
Instead of (HT2) and~(HT4) we now need
\begin{itemize}
\item[(HJ2)]
For $k\in\{1,2,\ldots\}$, $i=1,2,\ldots,k$ and $t_1,t_2,\ldots,t_k\in \mathbb{R}$,
define
$\widetilde{\mathbf{Y}}_{ik}^t=\left(\mathds{1}_{\{Y_i\leq t_1\}}-F(t_1),\ldots,\mathds{1}_{\{Y_i\leq t_k\}}-F(t_k)\right)$.
There exists a deterministic matrix~$\mathbf{\Sigma}_k^{\mathrm{HJ}}$, such that
\begin{equation}
\label{eq:HJ2 alternative}
\lim_{N\to\infty}
\frac{n}{N^2}
\sum_{i=1}^N\sum_{j=1}^N
\frac{\pi_{ij}-\pi_i\pi_j}{\pi_i\pi_j}
\widetilde{\mathbf{Y}}_{ik}\widetilde{\mathbf{Y}}_{jk}^t
=
\mathbf{\Sigma}_k^{\mathrm{HJ}},
\quad
\omega-\text{a.s.}
\end{equation}
\end{itemize}
and
\begin{itemize}
\item[(HJ4)]
For all $k\in \{1,2,\ldots\}$ and $t_1,t_2,\ldots,t_k\in \mathbb{R}$,
the matrix $\mathbf{\Sigma}_k^{\mathrm{HJ}}$ in~\eqref{eq:HJ2 alternative} is positive definite.
\end{itemize}
As in the case of~\eqref{eq:HT2 alternative moment}, if (C1)-(C2) hold, then (HJ2) implies
\begin{equation}
\label{eq:HJ2 alternative moment}
\mathbf{\Sigma}_k^{\mathrm{HJ}}
=
\lim_{N\to\infty}
\frac{1}{N^2}
\sum_{i=1}^N\sum_{j=1}^N
\mathbb{E}_m
\left[n
\frac{\pi_{ij}-\pi_i\pi_j}{\pi_i\pi_j}
\widetilde{\mathbf{Y}}_{ik}\widetilde{\mathbf{Y}}_{jk}^t
\right].
\end{equation}
\begin{theorem}
\label{th:FCLT Lumley}
Let $\GL$ be defined in~\eqref{eq:HT Lumley} and let $\widehat{N}=\sum_{i=1}^N \xi_i/\pi_i$.
Suppose $n\to\infty$, $\omega$-a.s., and that there exists $\sigma_{\pi}^2\geq 0$, such that
\begin{equation}
\label{eq:cond CLT Nhat}
\frac{n}{N^2}
\sum_{i=1}^N\sum_{i=1}^N
\frac{\pi_{ij}-\pi_i\pi_j}{\pi_i\pi_j}
\to
\sigma_{\pi}^2,
\quad
\omega-\text{a.s.}
\end{equation}
If in addition,
\begin{itemize}
\item[(i)]
(HT1) hold, then $\widehat{N}/N\to 1$ in $\mathbb{P}_{d,m}$ probability.
\item[(ii)]
(C1)-C(4), (HT1), (HT3), (HJ2) and (HJ4) hold, then~$\GL$ converges weakly in $D(\mathbb{R})$ to a
mean zero Gaussian process~$\mathbb{G}^{\mathrm{\pi}}$ with covariance function
$\mathbb{E}_{d,m}\mathbb{G}^{\mathrm{\pi}}(s)\mathbb{G}^{\mathrm{\pi}}(t)$
given by
\[
\begin{split}
\lim_{N\to\infty}
\frac{1}{N^2}
\sum_{i=1}^N\sum_{j=1}^N
\mathbb{E}_m
&\left[n
\frac{\pi_{ij}-\pi_i\pi_j}{\pi_i\pi_j}
\left(\mathds{1}_{\{Y_i\leq s\}}-F(s)\right)
\left(\mathds{1}_{\{Y_i\leq t\}}-F(t)\right)
\right]\\
&+
\lambda
\left(
F(s\wedge t)-F(s)F(t)
\right),
\quad
s,t\in \mathbb{R}.
\end{split}
\]
\end{itemize}
\end{theorem}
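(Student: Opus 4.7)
For part~(i), the plan is a Chebyshev argument in the design space followed by integration over $\omega$. Because $\mathbb{E}_d(\widehat{N}/N,\omega)=1$, the design-based variance of $\widehat{N}/N$ coincides with $(1/N^2)\sum_{i,j}(\pi_{ij}-\pi_i\pi_j)/(\pi_i\pi_j)$, which by hypothesis~\eqref{eq:cond CLT Nhat} equals $\sigma_\pi^2/n+o(1/n)$ $\omega$-a.s. Since $n\to\infty$ $\omega$-a.s., this variance tends to zero $\omega$-a.s., and Chebyshev gives $\mathbb{P}_d(|\widehat{N}/N-1|>\varepsilon,\omega)\to 0$ for $\mathbb{P}_m$-a.e.~$\omega$. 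Dominated convergence (the integrand is bounded by $1$) then yields $\mathbb{P}_{d,m}(|\widehat{N}/N-1|>\varepsilon)\to 0$. Equivalently, (HT1) applied with $V_i\equiv 1$, combined with $S_N\to 0$, gives the same conclusion.

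For part~(ii), weak convergence in $D(\mathbb{R})$ would be established in the standard two steps. First, convergence of finite-dimensional distributions via Cram\'er--Wold: fix $k\geq 1$, $t_1<\cdots<t_k$, and $a_1,\ldots,a_k\in\mathbb{R}$, and set $V_i=\sum_{j=1}^ka_j(\mathds{1}_{\{Y_i\leq t_j\}}-F(t_j))$. These are bounded, i.i.d., and mean zero, with $\mathrm{Var}_m(V_i)=\sigma_V^2=\sum_{j,l}a_ja_l(F(t_j\wedge t_l)-F(t_j)F(t_l))$. Since $\sum_j a_j\GL(t_j)=(\sqrt{n}/N)\sum_i\xi_iV_i/\pi_i$, I would apply the decomposition~\eqref{eq:decomposition CLT}. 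By~(HJ2), $nS_N^2\to\mathbf{a}^\top\mathbf{\Sigma}_k^{\mathrm{HJ}}\mathbf{a}$, which is strictly positive by~(HJ4), so~(HT1) applied to these $V_i$'s shows that the first summand converges under $\mathbb{P}_d$, $\omega$-a.s., to $N(0,\mathbf{a}^\top\mathbf{\Sigma}_k^{\mathrm{HJ}}\mathbf{a})$. The classical CLT together with~(HT3) gives that the second summand converges under $\mathbb{P}_m$ to $N(0,\lambda\sigma_V^2)$. Combining these via Theorem~5.1(iii) of~\cite{Rubin-Bleuer_Kratina_2005} delivers joint convergence under $\mathbb{P}_{d,m}$ to a mean-zero normal with variance $\mathbf{a}^\top\mathbf{\Sigma}_k^{\mathrm{HJ}}\mathbf{a}+\lambda\sigma_V^2$, which is exactly the claimed covariance structure.

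The second step, tightness of $\{\GL\}$ in $D(\mathbb{R})$, is the main technical obstacle. I plan to verify Billingsley's moment criterion by bounding, for $r\leq s\leq t$,
\[
\mathbb{E}_{d,m}\big[(\GL(s)-\GL(r))^2(\GL(t)-\GL(s))^2\big]\leq K(F(t)-F(r))^2.
\]
Writing $a_i=\mathds{1}_{\{r<Y_i\leq s\}}-(F(s)-F(r))$ and $b_i=\mathds{1}_{\{s<Y_i\leq t\}}-(F(t)-F(s))$, the expansion produces a quadruple sum with generic summand $(n^2/N^4)\,\mathbb{E}_d[\xi_i\xi_j\xi_k\xi_l]/(\pi_i\pi_j\pi_k\pi_l)\cdot a_ia_jb_kb_l$. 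I would partition the sum according to how many of the indices $i,j,k,l$ coincide and apply~(C1) to bound $1/\pi_i$ and~(C2)--(C4) to control the corresponding centered joint moments of the $\xi$'s, while disjointness of the intervals $(r,s]$ and $(s,t]$ keeps the surviving $\mathbb{E}_m$-moments of order $(F(s)-F(r))(F(t)-F(s))\leq(F(t)-F(r))^2/4$. The delicate point is the bookkeeping: each partition class contributes a different power of~$N$ and of~$n$, and these must add up to exactly $O(1)\cdot(F(t)-F(r))^2$. This is essentially the same style of computation that underlies the tightness step of Theorem~\ref{th:FCLT HT FN}. Alternatively, using the identity $\GL(t)=\sqrt{n}(\FHT(t)-F(t))-F(t)\sqrt{n}(\widehat{N}/N-1)$, tightness can be transferred from Theorem~\ref{th:FCLT HT F} by combining it with the tightness of the scalar $\sqrt{n}(\widehat{N}/N-1)$, which follows from~(HT1) with $V_i\equiv 1$ together with~\eqref{eq:cond CLT Nhat}.
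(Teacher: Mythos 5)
Your part~(i) and your finite-dimensional argument are correct and essentially the paper's: the authors prove~(i) by applying (HT1) with $V_i\equiv 1$ and~\eqref{eq:cond CLT Nhat} to get $\sqrt{n}(\widehat N/N-1)\Rightarrow N(0,\sigma_\pi^2)$ and then dividing by $\sqrt{n}\to\infty$; your Chebyshev-plus-dominated-convergence variant is a slightly more elementary route to the same conclusion. For the projections, the paper does exactly what you describe: Cram\'er--Wold with $V_{ik}=\sum_j a_j(\mathds{1}_{\{Y_i\leq t_j\}}-F(t_j))$, the decomposition~\eqref{eq:decomposition CLT} with $\mu_V=0$, (HJ2)/(HJ4) to get $nS_N^2\to\mathbf{a}^t\mathbf{\Sigma}_k^{\mathrm{HJ}}\mathbf{a}>0$, and Theorem~5.1(iii) of~\cite{Rubin-Bleuer_Kratina_2005} (packaged as Lemma~\ref{lem:HT CLT}).

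The problem is your \emph{primary} plan for tightness. Conditions (C2)--(C4) bound the \emph{design-centered} moments $\mathbb{E}_d\prod(\xi_i-\pi_i)$, whereas your quadruple sum carries the raw weights, i.e.\ $\mathbb{E}_d[\xi_i\xi_j\xi_k\xi_l]/(\pi_i\pi_j\pi_k\pi_l)=1+\Delta_{ijkl}$ for distinct indices. The leading ``$1$'' is only killed by the model-centering of your $a_i,b_i$, and that factorization $\mathbb{E}_m[a_ia_jb_kb_l\,g(\mathbf{Z})]=0$ is unavailable in the generality of the theorem, where the $\pi_i$ are $\omega$-random and the $Z_i$ may be dependent on the $Y_i$. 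Bounding instead $|\Delta_{ijkl}|=O(1/n)$ via (C1)--(C2) and $\mathbb{E}_m|a_ia_jb_kb_l|=O(p_1^2p_2^2)$ leaves a contribution of order $(n^2/N^4)\cdot N^4\cdot(1/n)\cdot p_1^2p_2^2=O(n)\,(F(t)-F(r))^4$, which is \emph{not} uniformly $O((F(t)-F(r))^2)$, so Billingsley's criterion fails along this route. This is precisely why the paper centers in the design space and proves the moment bound for $\sqrt{n}(\FHT-\FN)$ (Lemma~\ref{lem:tightness HT FN}, with $\alpha_i=(\xi_i-\pi_i)A_i/\pi_i$ and \emph{uncentered} indicators), and then obtains tightness of $\GL$ exactly by your ``alternative'': $\GL(t)=\sqrt{n}(\FHT(t)-F(t))-F(t)\,\sqrt{n}(\widehat N/N-1)$, where the first process is tight by Theorem~\ref{th:FCLT HT F} and the second converges weakly to a continuous process by~\eqref{eq:CLT Nhat}, so Lemma~\ref{lem:tightness sum} applies. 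Promote that alternative to the main argument (note that for the sum-of-processes lemma you need weak convergence of $\sqrt{n}(\widehat N/N-1)$, not merely tightness, but that is what (HT1) plus~\eqref{eq:cond CLT Nhat} give you), and the proof is complete.
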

Note that in view of condition (HT3), the condition $n\to\infty$ is immediate, if $\lambda>0$.
\begin{comment}
Theorem~\eqref{th:FCLT Lumley}(ii) is similar to Theorem~3 in~\cite{Lumley_2014}.
The latter result holds more generally for empirical processes indexed by a class of functions.
However, the conditions imposed in~\cite{Lumley_2014} seem very difficult to verify,
whereas Theorem~\eqref{th:FCLT Lumley}(ii) is obtained under relatively easy to check assumptions on inclusion probabilities,
especially when these are deterministic.
\end{comment}
We proceed by establishing weak convergence of $\sqrt{n}(\FHJ-\FN)$.
\begin{theorem}
\label{th:FCLT HJ FN}
Let $Y_1,\ldots,Y_N$ be i.i.d.~random variables with c.d.f.~$F$ and empirical c.d.f.~$\FN$ and
let $\FHJ$ be defined in~\eqref{eq:Hajek cdf}.
Suppose $n\to\infty$, $\omega$-a.s., and that (C1)-C(4), (HT1), (HT3), and (HJ2) hold,
as well as condition~\eqref{eq:cond CLT Nhat}.
Then $\sqrt{n}(\FHJ-\FN)$ converges weakly in $D(\mathbb{R})$ to
a mean zero Gaussian process~$\mathbb{G}^{\mathrm{HJ}}$ with covariance function
$\mathbb{E}_{d,m}\mathbb{G}^{\mathrm{HJ}}(s)\mathbb{G}^{\mathrm{HJ}}(t)$ given by
\[
\lim_{N\to\infty}
\frac{1}{N^2}
\sum_{i=1}^N\sum_{j=1}^N
\mathbb{E}_m
\left[n
\frac{\pi_{ij}-\pi_i\pi_j}{\pi_i\pi_j}
\left(\mathds{1}_{\{Y_i\leq s\}}-F(s)\right)
\left(\mathds{1}_{\{Y_i\leq t\}}-F(t)\right)
\right],
\]
for $s,t\in \mathbb{R}$.
\end{theorem}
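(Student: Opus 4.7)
The strategy is to exploit decomposition~\eqref{eq:decompose HJ EP},
\[
\sqrt{n}\bigl\{\FHJ(t)-\FN(t)\bigr\} = \mathbb{Y}_N(t) + (N/\widehat{N}-1)\GL(t),
\]
and show that the remainder $(N/\widehat{N}-1)\GL$ is asymptotically negligible uniformly in~$t$, so that the limit law of $\sqrt{n}(\FHJ-\FN)$ coincides with that of $\mathbb{Y}_N$. For the remainder, Theorem~\ref{th:FCLT Lumley}(i) applies under $n\to\infty$ a.s., (HT1), and~\eqref{eq:cond CLT Nhat}, giving $\widehat{N}/N\to 1$ in $\mathbb{P}_{d,m}$-probability, hence $N/\widehat{N}-1 = o_{\mathbb{P}_{d,m}}(1)$. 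It remains to check that $\sup_t|\GL(t)| = O_{\mathbb{P}_{d,m}}(1)$; this is the tightness content of the proof of Theorem~\ref{th:FCLT Lumley}(ii), and it relies only on (C1)-(C4) and (HT3) (the hypothesis (HJ4) is used there solely to ensure a non-degenerate Gaussian limit).

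For finite-dimensional convergence of $\mathbb{Y}_N$, fix $k\geq 1$, points $t_1,\ldots,t_k$, weights $c_1,\ldots,c_k$, and observe that
\[
\sum_{l=1}^k c_l \mathbb{Y}_N(t_l) = \sqrt{n}\left(\frac{1}{N}\sum_{i=1}^N\frac{\xi_i V_i}{\pi_i} - \frac{1}{N}\sum_{i=1}^N V_i\right),\quad V_i = \sum_{l=1}^k c_l\bigl(\mathds{1}_{\{Y_i\leq t_l\}} - F(t_l)\bigr),
\]
with the $V_i$'s i.i.d.~and bounded under $\mathbb{P}_m$. Applying (HT1) conditionally on $\omega$ produces a conditional CLT with normalizing variance $nS_N^2$, which by (HJ2) converges $\omega$-a.s.~to the deterministic quadratic form $\sum_{l,m} c_l c_m\,\mathbf{\Sigma}_k^{\mathrm{HJ}}(l,m)$. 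Dominated convergence of characteristic functions then promotes this to a CLT under $\mathbb{P}_{d,m}$. A direct moment computation of $\mathbb{E}_{d,m}[\mathbb{Y}_N(s)\mathbb{Y}_N(t)]$, factoring through the conditional independence of the $\xi_i$'s and $Y_i$'s given~$\omega$ and then invoking~\eqref{eq:HJ2 alternative moment}, matches the stated covariance. Note the absence of a $\lambda(F(s\wedge t)-F(s)F(t))$ term: the extra centering $-F(t)$ inside $\mathbb{Y}_N$ makes $\mathbb{E}_d[\mathbb{Y}_N(t)|\omega]=0$, which annihilates the super-population contribution that surfaces in the limit of $\GL$.

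Tightness of $\mathbb{Y}_N$ is the technical crux. Writing $\mathbb{Y}_N(t)-\mathbb{Y}_N(s) = \frac{\sqrt{n}}{N}\sum_i (\xi_i/\pi_i-1)A_i(s,t)$ with $A_i(s,t) = \mathds{1}_{\{s<Y_i\leq t\}} - (F(t)-F(s))$, I would derive a Billingsley-type fourth moment bound
\[
\mathbb{E}_{d,m}\bigl[(\mathbb{Y}_N(t)-\mathbb{Y}_N(u))^2(\mathbb{Y}_N(u)-\mathbb{Y}_N(s))^2\bigr] \leq C(F(t)-F(s))^2
\]
for $s\leq u\leq t$, by expanding into fourth-order products of the $(\xi_i/\pi_i-1)$ factors, conditioning on $\omega$ to pull out the $Y_i$-dependent pieces, and controlling each index pattern (all distinct, one coincidence, two coincidences, all equal) by the corresponding rate in (C1)-(C4). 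The model-centering of $A_i$ ensures that the expectations of the cross terms under $\mathbb{P}_m$ retain the correct scaling in $N$ and $n$.

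The main obstacle is this tightness argument: the case analysis in the fourth-moment expansion is tedious and each of the $15$ index patterns must be estimated separately using (C1)-(C4), in essentially the manner already used for the tightness part of Theorem~\ref{th:FCLT HT FN}. Once the Billingsley bound is in hand, standard tightness in $D(\mathbb{R})$ combines with the finite-dimensional CLT to yield weak convergence of $\mathbb{Y}_N$ to $\mathbb{G}^{\mathrm{HJ}}$, and the decomposition then delivers the theorem.
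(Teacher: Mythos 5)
Your proposal is correct and follows the paper's overall route: the same decomposition~\eqref{eq:decompose HJ EP}, the same reduction via Theorem~\ref{th:FCLT Lumley}(i) and tightness of $\GL$ to the process $\mathbb{Y}_N$ of~\eqref{def:Y_N}, and the same Cram\'er--Wold argument through (HT1) and (HJ2) for the finite-dimensional distributions, including the correct observation that the design-centering in $\mathbb{Y}_N$ removes the $\lambda(F(s\wedge t)-F(s)F(t))$ term. The one genuine divergence is how tightness of $\mathbb{Y}_N$ is obtained. You propose a direct Billingsley fourth-moment bound on increments of $\mathbb{Y}_N$, redoing the case analysis of Lemma~\ref{lem:tightness HT FN} with the model-centered variables $A_i(s,t)=\mathds{1}_{\{s<Y_i\leq t\}}-(F(t)-F(s))$. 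This is feasible: the centering annihilates most index patterns under $\mathbb{E}_m$, and the coincident-index patterns that no longer vanish (in Lemma~\ref{lem:tightness HT FN} they drop out because $A_iB_i=0$) still have the right order under (C1)--(C4); but one must be careful that the design expectation $\mathbb{E}_d[\cdot]$ is a function of $\omega$ that cannot be pulled out of $\mathbb{E}_m$, so after bounding it $\omega$-a.s.\ one works with $\mathbb{E}_m$ of absolute values and must verify these are still $O(p_1^2p_2^2)$ (they are, by independence of the $Y_i$'s). The paper avoids this labor entirely by writing $\mathbb{Y}_N(t)=\sqrt{n}\{\FHT(t)-\FN(t)\}-F(t)\,\sqrt{n}(\widehat{N}/N-1)$, observing that the first process converges weakly to a continuous limit by Theorem~\ref{th:FCLT HT FN} and the second by the scalar CLT~\eqref{eq:CLT Nhat}, and invoking Lemma~\ref{lem:tightness sum} (a sum of two processes each converging weakly to continuous limits is tight). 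The paper's route is much shorter but leans on weak convergence of $\sqrt{n}(\FHT-\FN)$, hence implicitly on the limit structure already built for the Horvitz--Thompson process; your route is more computational but self-contained at the tightness step.
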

Note that we do not need condition (HJ4) in Theorem~\ref{th:FCLT HJ FN}.
This condition is only needed in Theorem~\ref{th:FCLT Lumley} to establish the limit distribution of the finite dimensional projections
of the process $\GL$.
For Theorem~\ref{th:FCLT HJ FN} we only need that $\GL$ is tight.

As before, below we obtain a functional CLT for $\sqrt{n}(\FHJ-\FN)$ in the case that
$n$ and the inclusion probabilities are deterministic.
Similar to the remark we made after Theorem~\ref{th:FCLT HT FN}, note that if we would have imposed~(HJ2) for any sequence of bounded random vectors,
then this would imply conditions (i)-(ii) of Proposition~\ref{prop:FCLT HT FN}, which can then be left out in Theorem~\ref{th:FCLT Lumley}.
\begin{proposition}
\label{prop:FCLT HJ FN}
Consider the setting of Theorem~\ref{th:FCLT HJ FN}, where $n$ and $\pi_i,\pi_{ij}$, for $i,j=1,2,\ldots,N$, are deterministic.
Suppose $n\to\infty$ and that (C1)-(C4), (HT1) and~(HT3) hold, as well as conditions~(i)-(ii) from Proposition~\ref{prop:FCLT HT FN}.
Then $\sqrt{n}(\FHJ-\FN)$ converges weakly in $D(\mathbb{R})$ to
a mean zero Gaussian process $\mathbb{G}^{\mathrm{HT}}$ with covariance function
$%\mathbb{E}_{d,m}\mathbb{G}_F^{\mathrm{HT}}(s)\mathbb{G}_F^{\mathrm{HT}}(t)=
\mu_{\pi_1}
\left(
F(s\wedge t)-F(s)F(t)
\right)$,
for $s,t\in \mathbb{R}$.
\end{proposition}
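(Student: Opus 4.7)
The plan is to deduce the proposition directly from Theorem~\ref{th:FCLT HJ FN}, by verifying that conditions~(i)-(ii) of Proposition~\ref{prop:FCLT HT FN}, together with the standing hypotheses, imply both~\eqref{eq:cond CLT Nhat} and~(HJ2), and then reading off the explicit covariance from the deterministic structure of the problem. Checking~\eqref{eq:cond CLT Nhat} is immediate: split the double sum into its diagonal and off-diagonal parts. Since $\pi_{ii}=\pi_i$, the diagonal contribution is $\frac{n}{N^2}\sum_{i=1}^N(1/\pi_i-1)\to \mu_{\pi_1}$ by~(i), and the off-diagonal part is exactly the quantity in~(ii), tending to $\mu_{\pi_2}$. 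Hence~\eqref{eq:cond CLT Nhat} holds with $\sigma_\pi^2=\mu_{\pi_1}+\mu_{\pi_2}$.

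The heart of the proof is verifying~(HJ2), which is again handled by a diagonal/off-diagonal split. Set $a_{i,N}=\frac{n}{N^2}(1/\pi_i-1)$ and $X_i=\widetilde{\mathbf{Y}}_{ik}\widetilde{\mathbf{Y}}_{ik}^t$. By~(C1), $0\le a_{i,N}\le C/N$, and by~(i), $\sum_i a_{i,N}\to \mu_{\pi_1}$. Since the $X_i$ are i.i.d.\ and bounded with mean $\mathbb{E}_m[\widetilde{\mathbf{Y}}_{1k}\widetilde{\mathbf{Y}}_{1k}^t]$, a weighted SLLN (or a straightforward variance-plus-Borel--Cantelli argument, justified by the $O(1/N)$ weight bound) yields $\mathbb{P}_m$-a.s.\ convergence of the diagonal part to $\mu_{\pi_1}\mathbb{E}_m[\widetilde{\mathbf{Y}}_{1k}\widetilde{\mathbf{Y}}_{1k}^t]$. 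For the off-diagonal, put $c_{ij}=\frac{n}{N^2}\cdot\frac{\pi_{ij}-\pi_i\pi_j}{\pi_i\pi_j}$; conditions~(C1) and~(C2) give $|c_{ij}|\le C/N^2$. Because the $\widetilde{\mathbf{Y}}_{ik}$ are centered and independent, the $\mathbb{P}_m$-variance of $\sum_{i\ne j}c_{ij}\widetilde{\mathbf{Y}}_{ik}\widetilde{\mathbf{Y}}_{jk}^t$ equals $\sum_{i\ne j}c_{ij}^2\mathbb{E}_m[|\widetilde{\mathbf{Y}}_{1k}|^2]^2=O(N^{-2})$, so this sum converges to zero $\mathbb{P}_m$-a.s.\ by Borel--Cantelli. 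Thus (HJ2) holds, with $\mathbf{\Sigma}_k^{\mathrm{HJ}}$ equal to $\mu_{\pi_1}\mathbb{E}_m[\widetilde{\mathbf{Y}}_{1k}\widetilde{\mathbf{Y}}_{1k}^t]$, whose $(p,q)$ entry is $\mu_{\pi_1}(F(t_p\wedge t_q)-F(t_p)F(t_q))$.

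Having verified~(HJ2) and~\eqref{eq:cond CLT Nhat}, all hypotheses of Theorem~\ref{th:FCLT HJ FN} are in force, and weak convergence of $\sqrt{n}(\FHJ-\FN)$ in $D(\mathbb{R})$ to a mean zero Gaussian process follows. It remains to simplify the covariance formula supplied by that theorem. In the deterministic setting one pulls $n$, $\pi_i$, $\pi_j$, $\pi_{ij}$ out of $\mathbb{E}_m$; independence of the $Y_i$'s kills the off-diagonal terms, and the diagonal reduces via (i) to $\mu_{\pi_1}(F(s\wedge t)-F(s)F(t))$, matching the claim. The main obstacle in the argument is the off-diagonal step in the verification of~(HJ2): merely an $L^2(\mathbb{P}_m)$ bound would be too weak, and one must combine the sharp size estimate $|c_{ij}|=O(N^{-2})$ from~(C2) with the centering/independence of the $\widetilde{\mathbf{Y}}_{ik}$'s to push the second-moment bound through Borel--Cantelli and obtain the $\mathbb{P}_m$-a.s.\ convergence demanded by~(HJ2).
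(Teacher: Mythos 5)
Your strategy differs from the paper's. The paper never verifies (HJ2): it reduces $\sqrt{n}(\FHJ-\FN)$ to the process $\mathbb{Y}_N$ of~\eqref{def:Y_N}, establishes tightness as in Theorem~\ref{th:FCLT HJ FN}, and re-derives the finite-dimensional limits by the Cram\'er--Wold device, feeding Lemma~\ref{lem:variance HT}(i) --- which under (C1)--(C2) and condition~(i) yields $nS_N^2\to\mu_{\pi1}\mathbb{E}_m[V_{1k}^2]$ only in $\mathbb{P}_m$-probability --- into (HT1). You instead verify the hypotheses \eqref{eq:cond CLT Nhat} and (HJ2) of Theorem~\ref{th:FCLT HJ FN} and then invoke that theorem wholesale. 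Organizationally your route is cleaner, and your identifications are all correct: the diagonal/off-diagonal split gives $\sigma_\pi^2=\mu_{\pi1}+\mu_{\pi2}$, $\mathbf{\Sigma}_k^{\mathrm{HJ}}=\mu_{\pi1}\mathbb{E}_m[\widetilde{\mathbf{Y}}_{1k}\widetilde{\mathbf{Y}}_{1k}^t]$, and the covariance collapses to $\mu_{\pi1}(F(s\wedge t)-F(s)F(t))$ because independence kills the off-diagonal terms in the limit formula. But your route sets a strictly higher bar than the paper's: (HJ2) demands $\omega$-almost sure convergence, whereas the paper only ever needs convergence in probability.

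That higher bar is exactly where your written justification has a gap, and you have located the difficulty in the wrong place. For the \emph{off-diagonal} sum your second-moment bound is $O(N^{-2})$, which is summable, so Chebyshev plus Borel--Cantelli does give a.s.\ convergence to zero; that step is fine. For the \emph{diagonal} sum, however, $\sum_i a_{i,N}^2=O(1/N)$, so the variance of $\sum_i a_{i,N}(X_i-\mathbb{E}_m X_i)$ is only $O(1/N)$, the resulting tail bound is not summable, and the ``straightforward variance-plus-Borel--Cantelli argument'' you offer does not close. The conclusion is nevertheless true: since the $X_i$ are bounded and $a_{i,N}=O(1/N)$, Hoeffding's inequality gives tails of order $\exp(-cN\epsilon^2)$, or a fourth-moment bound gives $O(N^{-2})$, either of which is summable --- but one of these must replace the second-moment argument, because a weighted SLLN for a triangular array of weights does not follow from a non-summable variance bound. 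With that repair (and the trivial remark that $\mu_{\pi1}+\mu_{\pi2}\geq 0$, being a limit of variances, so that \eqref{eq:cond CLT Nhat} is legitimately satisfied), your proof goes through.
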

Finally, we consider $\sqrt{n}(\FHJ-F)$.
Again, we relate this process to~\eqref{eq:HT Lumley} and write
\begin{equation}
\label{eq:relation GL and GHJ}
\begin{split}
\sqrt{n}\left(\FHJ(t)-F(t)\right)
&=
%\frac{N}{\widehat{N}}
%\frac{\sqrt{n}}N
%\left(
%\sum_{i=1}^N
%\frac{\xi_i\mathds{1}_{\{Y_i\leq t\}}}{\pi_i}
%-
%\sum_{i=1}^N
%\frac{\xi_i}{\pi_i}
%F(t)
%\right)\\
%&=
\frac{N}{\widehat{N}}
\GL(t).
\end{split}
\end{equation}
Since $\widehat{N}/N\to1$ in probability, this implies that
$\sqrt{n}(\FHJ-F)$ has the same limiting behavior as $\GL$.
\begin{theorem}
\label{th:FCLT HJ F}
Let $Y_1,\ldots,Y_N$ be i.i.d.~random variables with c.d.f.~$F$ and let $\FHJ$ be defined in~\eqref{eq:Hajek cdf}.
Suppose $n\to\infty$, $\omega$-a.s., and that (C1)-C(4), (HT1), (HT3), (HJ2) and (HJ4) hold,
as well as condition~\eqref{eq:cond CLT Nhat}.
Then $\sqrt{n}(\FHJ-F)$ converges weakly in~$D(\mathbb{R})$ to
a mean zero Gaussian process $\mathbb{G}^{\mathrm{HJ}}_F$ with covariance function
$\mathbb{E}_{d,m}\mathbb{G}^{\mathrm{\pi}}(s)\mathbb{G}^{\mathrm{\pi}}(t)$
given by
\[
\begin{split}
\lim_{N\to\infty}
\frac{1}{N^2}
\sum_{i=1}^N\sum_{j=1}^N
\mathbb{E}_m
&\left[n
\frac{\pi_{ij}-\pi_i\pi_j}{\pi_i\pi_j}
\left(\mathds{1}_{\{Y_i\leq s\}}-F(s)\right)
\left(\mathds{1}_{\{Y_i\leq t\}}-F(t)\right)
\right]\\
&+
\lambda
\left(
F(s\wedge t)-F(s)F(t)
\right),
\quad
s,t\in \mathbb{R}.
\end{split}
\]
\end{theorem}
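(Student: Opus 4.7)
The plan is to exploit the identity $\sqrt{n}(\FHJ-F)=(N/\widehat{N})\GL$ stated in (\ref{eq:relation GL and GHJ}), which reduces the theorem to combining the weak convergence of $\GL$ from Theorem~\ref{th:FCLT Lumley}(ii) with the consistency of $\widehat{N}/N$ from Theorem~\ref{th:FCLT Lumley}(i) via a Slutsky-type argument in the space $D(\mathbb{R})$.

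First, I would verify that the hypotheses of Theorem~\ref{th:FCLT HJ F} are precisely those required to apply both parts of Theorem~\ref{th:FCLT Lumley}: part~(i) needs only $n\to\infty$ $\omega$-a.s., (HT1), and the variance condition (\ref{eq:cond CLT Nhat}); part~(ii) additionally needs (C1)-(C4), (HT3), (HJ2) and (HJ4). All of these are in the assumption list of Theorem~\ref{th:FCLT HJ F}, so both conclusions are available. In particular, Theorem~\ref{th:FCLT Lumley}(ii) already produces a mean-zero Gaussian limit $\mathbb{G}^{\pi}$ in $D(\mathbb{R})$ whose covariance function coincides exactly with the one asserted for $\mathbb{G}^{\mathrm{HJ}}_F$ in the present statement, so there is nothing to compute on the covariance side.

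Next, from Theorem~\ref{th:FCLT Lumley}(i) I have $\widehat{N}/N\to 1$ in $\mathbb{P}_{d,m}$-probability. Applying the continuous mapping theorem to the map $x\mapsto 1/x$, which is continuous in a neighborhood of $1$, yields $N/\widehat{N}\to 1$ in $\mathbb{P}_{d,m}$-probability as well. Combined with the weak convergence $\GL\rightsquigarrow \mathbb{G}^{\pi}$ in $D(\mathbb{R})$ under $\mathbb{P}_{d,m}$, Slutsky's lemma gives joint convergence $(N/\widehat{N},\GL)\rightsquigarrow (1,\mathbb{G}^{\pi})$ in $\mathbb{R}\times D(\mathbb{R})$, because the first component converges in probability to a constant. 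The map $(r,f)\mapsto rf$ from $\mathbb{R}\times D(\mathbb{R})$ into $D(\mathbb{R})$ is continuous at every point $(1,f)$ (scalar multiplication is continuous in the Skorohod topology), so a further application of the continuous mapping theorem to the identity (\ref{eq:relation GL and GHJ}) yields $\sqrt{n}(\FHJ-F)\rightsquigarrow \mathbb{G}^{\pi}$ in $D(\mathbb{R})$, which is the claim with $\mathbb{G}^{\mathrm{HJ}}_F=\mathbb{G}^{\pi}$.

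The main obstacle in this argument is essentially bookkeeping: one must ensure that the convergence $N/\widehat{N}\to 1$ and the weak convergence of $\GL$ both live under the same measure $\mathbb{P}_{d,m}$, so that the joint Slutsky step is legitimate. Since Theorem~\ref{th:FCLT Lumley} already formulates both of its conclusions under $\mathbb{P}_{d,m}$, this causes no difficulty, and no further analysis of tightness or finite-dimensional distributions of $\sqrt{n}(\FHJ-F)$ is needed beyond what is inherited from $\GL$.
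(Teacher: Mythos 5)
Your proposal is correct and follows exactly the paper's own route: the paper proves Theorem~\ref{th:FCLT HJ F} in one line by invoking relation~\eqref{eq:relation GL and GHJ} together with Theorem~\ref{th:FCLT Lumley}, which is precisely the Slutsky-type argument you spell out. Your additional verification of the hypotheses and of the continuity of scalar multiplication in the Skorohod topology just makes explicit what the paper leaves implicit.
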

With Theorem~\ref{th:FCLT HJ F} we recover Theorem~1 in~\cite{Wang_2012}.
Our assumptions are comparable to those in~\cite{Wang_2012}, although this paper seems to miss
a condition on the convergence of the variance, such as our condition~(HJ2).

We conclude this section by establishing a functional CLT for $\sqrt{n}(\FHJ-F)$ in the case
of deterministic $n$ and inclusion probabilities.
\begin{proposition}
\label{prop:FCLT HJ F}
Consider the setting of Theorem~\ref{th:FCLT HJ F}, where $n$ and $\pi_i,\pi_{ij}$, for $i,j=1,2,\ldots,N$, are deterministic.
Suppose $n\to\infty$ and that (C1)-(C4), (HT1) and~(HT3) hold, as well as conditions~(i)-(ii) from Proposition~\ref{prop:FCLT HT F}.
Then $\sqrt{n}(\FHJ-F)$ converges weakly in $D(\mathbb{R})$ to
a mean zero Gaussian process $\mathbb{G}^{\mathrm{HJ}}$ with covariance function
$%\mathbb{E}_{d,m}\mathbb{G}_F^{\mathrm{HJ}}(s)\mathbb{G}_F^{\mathrm{HJ}}(t)=
\left(
\mu_{\pi_1}+\lambda
\right)
\left(
F(s\wedge t)-F(s)F(t)
\right)$,
for $s,t\in \mathbb{R}$.
\end{proposition}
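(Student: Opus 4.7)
The plan is to derive the proposition from the identity
\[
\sqrt{n}(\FHJ(t)-F(t)) = \frac{N}{\widehat{N}}\,\GL(t)
\]
given in~\eqref{eq:relation GL and GHJ}. By Slutsky's theorem it suffices to establish (a)~$\widehat{N}/N\to 1$ in $\mathbb{P}_{d,m}$-probability, and (b)~weak convergence of $\GL$ in $D(\mathbb{R})$ to a mean-zero Gaussian process with covariance $(\mu_{\pi_1}+\lambda)\{F(s\wedge t)-F(s)F(t)\}$. The overall strategy mirrors that of Theorem~\ref{th:FCLT HJ F}, with conditions (HJ2), (HJ4), and~\eqref{eq:cond CLT Nhat} replaced by the deterministic hypotheses~(i)--(ii) of Proposition~\ref{prop:FCLT HT F}.

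For (a), I would invoke Theorem~\ref{th:FCLT Lumley}(i). Its hypothesis~\eqref{eq:cond CLT Nhat} is obtained by splitting the double sum into diagonal and off-diagonal parts: since $\pi_{ii}=\pi_i$, the diagonal piece equals $\frac{n}{N^2}\sum_i(1/\pi_i-1)\to\mu_{\pi_1}$ by~(i), and the off-diagonal piece tends to $\mu_{\pi_2}$ by~(ii), so~\eqref{eq:cond CLT Nhat} holds with $\sigma_\pi^2=\mu_{\pi_1}+\mu_{\pi_2}$. Combined with (HT1) and $n\to\infty$, this yields $\widehat{N}/N\to 1$.

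For (b), tightness of $\GL$ in $D(\mathbb{R})$ is inherited from conditions~(C1)--(C4) via the same moment estimates used in the tightness proofs of Theorems~\ref{th:FCLT HT FN} and~\ref{th:FCLT HT F}. For the finite-dimensional distributions, fix $a_1,\dots,a_k\in\mathbb{R}$ and $t_1,\dots,t_k\in\mathbb{R}$ and introduce the bounded, centered, i.i.d.\ variables $W_i=\sum_\ell a_\ell(\mathds{1}_{\{Y_i\leq t_\ell\}}-F(t_\ell))$, with variance $\sigma_W^2=\sum_{\ell,m}a_\ell a_m\{F(t_\ell\wedge t_m)-F(t_\ell)F(t_m)\}$. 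Writing $\sum_\ell a_\ell\GL(t_\ell)=\sqrt{n}\,N^{-1}\sum_i\xi_iW_i/\pi_i$ and applying the decomposition~\eqref{eq:decomposition CLT}, I would use (HT1) to show that the design-based summand is asymptotically $N(0,\mu_{\pi_1}\sigma_W^2)$ conditionally on the $Y_i$'s, while the super-population summand contributes $N(0,\lambda\sigma_W^2)$ through the classical CLT and~(HT3). Combining these two limits through Theorem~5.1(iii) of~\cite{Rubin-Bleuer_Kratina_2005} yields a total variance of $(\mu_{\pi_1}+\lambda)\sigma_W^2$, which is exactly the announced covariance evaluated at the coefficients $(a_\ell)$.

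The main obstacle is the careful verification that $nS_N^2\to\mu_{\pi_1}\sigma_W^2$ in $\mathbb{P}_m$-probability under~(i)--(ii) rather than the stronger (HJ2): the diagonal contribution converges to $\mu_{\pi_1}\sigma_W^2$ via~(i) and a weighted law of large numbers, while the off-diagonal contribution has $\mathbb{P}_m$-mean zero, because $\mathbb{E}_m[W_iW_j]=0$ for $i\ne j$, and its variance is $o(1)$ by~(C2) together with the i.i.d.\ structure. Note that the $(\mu_{\pi_2}-\lambda)F(s)F(t)$ cross-term visible in Proposition~\ref{prop:FCLT HT F} disappears here precisely because centering by $F(t)$ forces $\mathbb{E}_m W_1=0$ and thus annihilates the $\mu_{\pi_2}(\mathbb{E}_m W_1)^2$ contribution.
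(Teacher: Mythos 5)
Your proposal follows essentially the same route as the paper's own proof: reduce to $\GL$ via the identity~\eqref{eq:relation GL and GHJ} and $\widehat{N}/N\to1$, deduce~\eqref{eq:cond CLT Nhat} from conditions (i)--(ii) by separating the diagonal terms, and obtain the finite-dimensional limits by the Cram\'er--Wold device applied to centered indicators, where the convergence $nS_N^2\to\mu_{\pi1}\sigma_W^2$ is exactly the content of Lemma~\ref{lem:variance HT}(i) and the design-based and model-based limits are combined through Lemma~\ref{lem:HT CLT} (i.e., Theorem~5.1(iii) of~\cite{Rubin-Bleuer_Kratina_2005}). The one point you pass over is the degenerate case $\sigma_W^2=0$, which occurs for repeated values among the $F(t_\ell)$ or for $F(t_\ell)\in\{0,1\}$: there the hypothesis $nS_N^2\to\sigma^2_{\mathrm{HT}}>0$ of (HT1) and Lemma~\ref{lem:HT CLT} fails, and the paper reduces these configurations separately as in Lemma~\ref{lem:fidis HT F deterministic}; the fix is immediate in your setup (the linear combination $\sum_\ell a_\ell\GL(t_\ell)$ then vanishes $\mathbb{P}_m$-almost surely, matching the degenerate normal limit), but it does need to be said, since otherwise the cited lemmas cannot be invoked.
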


{
\begin{remark}[High entropy designs]
\label{rem:high entropy HJ}
Remark~\ref{rem:high entropy} about simplifying the conditions for the Horvitz-Thompson empirical process in the case of high entropy designs,
also holds for the H\'ajek empirical process.
See Corollary~\ref{cor:high entropy}(iii)-(iv) and Corollary~\ref{cor:high entropy deterministic}(iii)(iv)
in Section~\ref{subsec:high entropy}.
\end{remark}
}

{
\section{Examples}
\label{sec:examples}

\subsection{High entropy designs}
\label{subsec:high entropy}
For the sake of brevity, let us suppress the possible dependence of a sampling design on $\mathbf{Z}^N$
and write $P(\cdot)=P(\cdot,\mathbf{Z}^N)$.
The entropy of a sampling design $P$ is defined as
\[
H(P)=-\sum_{s\in \mathcal{S}_N} P(s)\Log[P(s)]
\]
where $\Log$ denotes the Napierian logarithm, and define $0\Log[0]=0$.
The entropy $H(P)$ represents the average amount of information contained in design $P$
(e.g., see~\cite{berger1998b}).
Given inclusion probabilities $\pi_1,\ldots,\pi_N$, the rejective sampling design,
denoted by $R$ (see~\cite{hajek1959,Hajek_1964}), is known to maximize the entropy
among all fixed size sampling designs subject to the constraint that the first order inclusion probabilities are equal to $\pi_1,\ldots, \pi_N$.
This sampling design is defined by
\[
R(s)=\theta\prod_{i\in s}\alpha_i,
\quad
\text{with }
\alpha_i=\eta \frac{p_i}{1-p_i}
\]
where $\theta$ is such that $\sum_{s\in \mathcal{S}_N}R(s)=1$, $\eta$ is such that $\sum_{i=1}^N\alpha_i=1$,
and the $0<p_i<1$ are such that $\sum_{i=1}^Np_i=n$ and are chosen to produce the first order inclusion probabilities $\pi_i$.
It is shown in~\cite{dupacova1979} that for any given set of inclusion probabilities $\pi_1,\ldots,\pi_N$, there always exists
a unique set of~$p_i$'s such that the first order inclusion probabilities corresponding to $R$ are exactly equal to the $\pi_i$'s.

An important class is formed by sampling designs $P$ that are close to a rejective sampling design~$R$.
Berger~\cite{berger1998b} considers such a class where the divergence of $P$ from $R$ is measured by
\begin{equation}
\label{def:divergence}
D(P\| R)=\sum_{s\in \mathcal{S}_N} P(s)\Log\left[\frac{P(s)}{R(s)}\right].
\end{equation}
In this subsection we will consider high entropy designs $P$, i.e., sampling designs $P$ for which
{there exists a rejective sampling design $R$ such that}
\begin{itemize}
\item[(A1)]
$D(P\| R)\to 0$, as $N\to\infty$.
\end{itemize}
A similar class is considered in~\cite{conti2014, conti2015}, where the Hellinger distance between~$P$ and~$R$
is used instead of~\eqref{def:divergence}.
Sampling designs satisfying~(A1) are investigated in~\cite{berger1998b}.
Examples are Rao-Sampford sampling and successive sampling, see Theorems~6 and~7 in~\cite{berger1998b}.

For high entropy designs $P$ satisfying~(A1), the conditions imposed in
Sections~\ref{sec:HT} and~\ref{sec:Hajek} can be simplified considerably.
Essentially, the results in these sections can be obtained by conditions on the rate at which
\begin{equation}
\label{def:dN}
d_N=\sum_{i=1}^N \pi_i(1-\pi_i)
\end{equation}
tends to infinity,
compared to $N$ and~$n$.
First of all condition (HT1) can be established under mild conditions.
\begin{proposition}
\label{prop:HT1}
Let $P$ be a high entropy design satisfying (A1) with inclusion probabilities $\pi_1,\ldots,\pi_N$.
Let $d_N$ and $S_N^2$ be defined by~\eqref{def:dN} and~\eqref{def:variance HT}.
Suppose that (C1) holds and that the following conditions hold $\omega$-almost surely
\begin{itemize}
\item[(B1)] $n/d_N=O(1)$, as $N\to\infty$;
\item[(B2)] $N/d_N^2\to0$, as $N\to\infty$;
\item[(B3)] $n^2S_N^2\to\infty$, as $N\to\infty$.
\end{itemize}
Then (HT1) is satisfied.
\end{proposition}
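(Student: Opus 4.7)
The plan is to reduce (HT1) to H\'ajek's classical central limit theorem for rejective sampling and then transfer it to the high-entropy design $P$ using the divergence condition (A1), following the strategy of Berger~\cite{berger1998b}. First I fix $\omega$ outside the null set on which any of (C1), (B1)-(B3), or $|V_i|\leq M$ fails, and write $v_i:=V_i(\omega)$; the sequence $v_1,\ldots,v_N$ is then deterministic and bounded by $M$, and only the design randomness $\mathbb{P}_d$ remains. Let $R$ be the rejective sampling design sharing the same first-order inclusion probabilities $\pi_1,\ldots,\pi_N$ as $P$, and write $S_N^R$ for the design variance of the HT estimator computed under $R$.

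Next I verify H\'ajek's CLT for $R$. This theorem requires $d_N\to\infty$ together with a Lindeberg-type condition on the standardized summands $\xi_iv_i/\pi_i$. Since $d_N\leq N/4$ and (B2) gives $d_N^2/N\to\infty$, we have $d_N\to\infty$. For the Lindeberg condition, a sufficient form reduces, after exploiting the boundedness $|v_i|\leq M$ and the lower bound $\pi_i\geq K_1 n/N$ provided by (C1), to $NS_N\to\infty$. This in turn follows from (B3), which yields $nS_N\to\infty$, combined with the lower bound $N\geq K_1 n$ that (C1) also entails. H\'ajek's theorem therefore gives, in distribution under $\mathbb{P}_d$ induced by $R$,
\[
\frac{1}{S_N^R}\left(\frac{1}{N}\sum_{i=1}^N \frac{\xi_i v_i}{\pi_i}-\frac{1}{N}\sum_{i=1}^N v_i\right)\to N(0,1).
\]

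Finally I transfer the CLT from $R$ to $P$ and replace $S_N^R$ with $S_N$. By Pinsker's inequality and (A1), $\|P-R\|_{\mathrm{TV}}\leq\sqrt{D(P\|R)/2}\to 0$, so for every bounded measurable functional of the sample the expectations under $P$ and under $R$ share the same limit; the $N(0,1)$ limit above therefore carries over verbatim to $\mathbb{P}_d$ under $P$. A Slutsky argument then yields (HT1), \emph{provided} that $S_N^R/S_N\to 1$. This last variance-equivalence step is the main obstacle: closeness in divergence does not automatically imply closeness of second-order moments. One must bound $|S_N^2-(S_N^R)^2|$ in terms of the discrepancies between the second-order inclusion probabilities of $P$ and $R$ and then use (B1), (B2), and (C1) to show that this discrepancy is asymptotically negligible relative to $(S_N^R)^2$. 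Bounds of exactly this flavor are developed for high-entropy designs in Berger~\cite{berger1998b}, which is what makes the reduction work.
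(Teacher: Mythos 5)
Your two-stage architecture (prove the CLT for the rejective design $R$ sharing the first-order inclusion probabilities, then transfer to $P$ via the divergence) is the same as the paper's, but both stages differ in substance and the second contains a genuine unresolved gap. For stage one, the paper does not invoke H\'ajek's theorem as a black box: it represents $R$ as Poisson sampling conditioned on the sample size and verifies, one by one, the four conditions of the CLT of Mirakhmedov et al.\ for conditioned sums of independent indicators, which is precisely where (B1), (B2), (B3) and (C1) are consumed (e.g.\ (B1) and (B3) control the Lindeberg-type quantity $L_{2,N}(\epsilon)$, and (B2) is needed for the characteristic-function condition on $M_N$). Your version asserts that the Lindeberg condition ``reduces to $NS_N\to\infty$'' without computing it; under (C1) the summands $v_i/\pi_i$ are only bounded by $O(N/n)$, so the exceedance set $\{i: |v_i/\pi_i - R|>\tau\sigma\}$ with $\sigma\asymp NS_N$ is empty only when $nS_N\to\infty$, not when $NS_N\to\infty$. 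Since (B3) does give $nS_N\to\infty$ your conclusion is recoverable, but the sufficient condition you state is the wrong (weaker) one, and the verification is not actually carried out.

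The more serious issue is the one you flag yourself: after the total-variation (Pinsker/Berger Lemma~2) transfer, you obtain the $N(0,1)$ limit under $P$ only for the statistic normalized by $S_N^R$, whereas (HT1) for $P$ requires normalization by $S_N$ built from the second-order inclusion probabilities of $P$. Closeness in divergence gives $|\pi_{ij}(P)-\pi_{ij}(R)|\leq\sqrt{2D(P\|R)}=o(1)$ uniformly, but the resulting crude bound $|S_N^2-(S_N^R)^2|=o(1)\,N^2/n^2$ need not be $o(S_N^2)$ under (B3) alone, so $S_N^R/S_N\to1$ does not follow from what you have written. Acknowledging an obstacle is not the same as overcoming it: as it stands your proof establishes (HT1) for $R$ but not for $P$. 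The paper sidesteps this entirely by citing Berger's Theorem~5, which extends the CLT with the correct variance to high entropy designs; if you want a self-contained argument you must actually produce the finer comparison of $S_N^2$ and $(S_N^R)^2$ rather than point to where one might be found.
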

Conditions (B1)-(B2) are immediate, if $d_N/N\to d>0$ and $n/N\to\lambda>0$.
Moreover, $nS_N^2$ typically converges almost surely to some $\sigma^2\geq 0$,
so that~(B3) is immediate as soon as $\sigma^2>0$ and (B1) holds.

The following corollary covers the results from Sections~\ref{sec:HT} and~\ref{sec:Hajek} for
high entropy designs with inclusion probabilities that possibly depend on $\omega$.
Such designs are considered for instance in~\cite{Bertail_2013} and~\cite{conti2015}.
\begin{corollary}
\label{cor:high entropy}
Let $P$ be a high entropy design satisfying (A1) with inclusion probabilities $\pi_1,\ldots,\pi_N$,
and let $d_N$ be defined by~\eqref{def:dN}.
Suppose that conditions (C1) and (HT1) hold.
Furthermore, suppose that the following conditions hold $\omega$-almost surely:
\begin{itemize}
\item[(A2)] $d_N\to\infty$, as $N\to\infty$;
\item[(A3)] $n/d_N=O(1)$, as $N\to\infty$;
\item[(A4)] $N^2/(nd_N)=O(1)$, as $N\to\infty$.
\end{itemize}
Then
\begin{itemize}
\item[(i)] if (HT2) is satisfied, then the conclusion of Theorem~\ref{th:FCLT HT FN} holds;
\item[(ii)] if (HT2)-(HT4) are satisfied, then the conclusion of Theorem~\ref{th:FCLT HT F} holds.
\item[(iii)] if (HT3), (HJ2) are satisfied, and $\omega$-almost surely,
\begin{itemize}
\item[(A5)] $n(N-n)^2/(N^2d_N)\to \alpha$, as $N\to\infty$,
\end{itemize}
then the conclusion of Theorem~\ref{th:FCLT HJ FN} holds;
\item[(iv)]
if (HT3), (HJ2), (HJ4), and (A5) are satisfied, then the conclusion of Theorem~\ref{th:FCLT HJ F} holds.
\end{itemize}
\end{corollary}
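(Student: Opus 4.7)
The plan is to deduce parts~(i)--(iv) from Theorems~\ref{th:FCLT HT FN}, \ref{th:FCLT HT F}, \ref{th:FCLT HJ FN} and~\ref{th:FCLT HJ F}, respectively. Conditions~(C1), (HT1), (HT2)--(HT4), (HJ2) and~(HJ4) are assumed in the corollary wherever they are needed, so the task reduces to verifying (C2)--(C4) in all cases and, for parts~(iii)--(iv), the variance convergence~\eqref{eq:cond CLT Nhat}. The requirement $n\to\infty$ appearing in Theorems~\ref{th:FCLT HJ FN} and~\ref{th:FCLT HJ F} is immediate, since (A4) combined with $d_N\le N$ yields $n\ge N^{2}/(Cd_N)\ge N/C$, which tends to infinity with~$N$.

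The first step is to verify~(C2)--(C4). Let $R$ be the rejective sampling design with the same first order inclusion probabilities $\pi_1,\ldots,\pi_N$ as~$P$, appearing in~(A1). The analysis in~\cite{Boistard_2012} (in particular the bounds on higher order inclusion probabilities underlying Lemma~2 there) yields, uniformly in the indices and $\omega$-a.s.,
\[
\bigl|\mathbb{E}_{R}(\xi_{i_1}-\pi_{i_1})(\xi_{i_2}-\pi_{i_2})\bigr|=O(1/d_N),\qquad \Bigl|\mathbb{E}_{R}\prod_{r=1}^{\nu}(\xi_{i_r}-\pi_{i_r})\Bigr|=O(1/d_N^{2}),\ \nu\in\{3,4\}.
\]
I would then transfer these bounds from~$R$ to~$P$ by applying the Csisz\'ar--Kullback--Pinsker inequality
\[
\bigl|\mathbb{E}_{P}\Phi-\mathbb{E}_{R}\Phi\bigr|\le \|\Phi\|_{\infty}\sqrt{2D(P\|R)}
\]
to the (uniformly bounded) products of centered indicators: since $D(P\|R)\to 0$ by~(A1), the rates above are preserved under~$P$ up to an additive $o(1)$ correction. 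Finally, conditions~(A3)--(A4) translate these $1/d_N^{k}$ rates into the thresholds $n/N^{2}$, $n^{2}/N^{3}$ and $n^{2}/N^{4}$ required in (C2)--(C4): (C2) is precisely~(A4), (C4) is~(A4) squared, and for (C3) one computes $N^{3}/(n^{2}d_N^{2})=(N^{2}/(nd_N))\cdot(N/(nd_N))=O(1/N)$, which is much stronger than the $O(1)$ required by~(C3).

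For parts~(iii)--(iv) I next verify~\eqref{eq:cond CLT Nhat} by splitting the double sum into diagonal and off-diagonal contributions. The diagonal term $(n/N^{2})\sum_{i=1}^{N}(1-\pi_i)/\pi_i$ is $O(1)$ by~(C1) and converges $\omega$-a.s.\ under~(HT3). For the off-diagonal term I would use the H\'ajek approximation $\pi_{ij}^{R}-\pi_i\pi_j\approx -\pi_i\pi_j(1-\pi_i)(1-\pi_j)/d_N$ from~\cite{Hajek_1964}; transferring to~$P$ via~(A1) as in the previous step and invoking~(A5) yields
\[
\frac{n}{N^{2}}\ssum_{i\ne j}\frac{\pi_{ij}-\pi_i\pi_j}{\pi_i\pi_j}=-\frac{n(N-n)^{2}}{N^{2}d_N}+o(1)\longrightarrow -\alpha.
\]
Combining the two contributions yields the finite limit $\sigma_\pi^{2}\ge 0$ required in~\eqref{eq:cond CLT Nhat}, and the conclusions of Theorems~\ref{th:FCLT HJ FN} and~\ref{th:FCLT HJ F} then apply directly.

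The main obstacle is the rigorous transfer of the moment estimates (and of the H\'ajek approximation) from~$R$ to~$P$. Although~(A1) provides the natural entry point through the Csisz\'ar--Kullback--Pinsker inequality, one must verify that the $\sqrt{D(P\|R)}$ correction is negligible relative to the baseline $O(1/d_N^{k})$ rates, i.e.\ that the divergence decays sufficiently fast compared to $d_N^{-k}$ on the probability one set where~(A1)--(A4) hold. Once this transfer has been carried out uniformly over each index set $D_{\nu,N}$, the four parts of the corollary follow by direct application of Theorems~\ref{th:FCLT HT FN}--\ref{th:FCLT HJ F}.
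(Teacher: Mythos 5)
Your reduction of the problem to verifying (C2)--(C4) and \eqref{eq:cond CLT Nhat}, and your treatment of the rejective design itself, match the first half of the paper's argument (one imprecision: for $\nu=3$ the paper derives the bound $O(n^3/(N^3d_N))$ and uses (A3), rather than the $O(d_N^{-2})$ rate you assert, which is what \cite{Boistard_2012} gives only for $\nu=4$; this is harmless, since (A4) forces $N/n=O(1)$ and then even the cruder $O(1/d_N)$ bound yields (C3)). The genuine gap is the transfer from the rejective design to the high entropy design $P$. You propose to carry the moment bounds over via Pinsker's inequality, $|\mathbb{E}_P\Phi-\mathbb{E}_R\Phi|\le\|\Phi\|_\infty\sqrt{2D(P\|R)}$, and you yourself flag that this requires $\sqrt{D(P\|R)}$ to be negligible compared with $n/N^2$, $n^2/N^3$ and $n^2/N^4$ --- quantities that tend to zero. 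Assumption (A1) provides no rate for $D(P\|R)$, only that it tends to $0$, so the additive $o(1)$ error swamps every one of these vanishing thresholds; the same problem defeats your verification of \eqref{eq:cond CLT Nhat} for $P$, where the per-term Pinsker error is multiplied by roughly $N^2/n$ after summing over the off-diagonal pairs. The obstacle you name at the end is real, and the route you sketch does not overcome it without an additional rate assumption on the divergence.

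The paper circumvents this by never verifying (C2)--(C4) for $P$ at all. It first proves the full conclusions (i)--(iv) for the rejective design $\widetilde R$ with the \emph{same} first-order inclusion probabilities as $P$ (such an $\widetilde R$ exists by \cite{dupacova1979}, and $D(P\|\widetilde R)\le D(P\|R)\to0$ by Lemma~3 of \cite{berger1998b}), and then transfers the weak convergence itself at the level of the processes: for $f$ in the unit ball of bounded Lipschitz functions, the Horvitz--Thompson processes under $P$ and under $\widetilde R$ are the same functional of the sample $s$ (both weighted by $\pi_i(P)$), so
$|\mathbb{E}_df(\mathbb{G}_{P}^{\pi(P)})-\mathbb{E}_df(\mathbb{G}_{\widetilde R}^{\pi(P)})|\le\sum_s|P(s)-\widetilde R(s)|\le\sqrt{2D(P\|\widetilde R)}\to0$.
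Because weak convergence is metrized by the bounded Lipschitz metric and the limit law is fixed, an unquantified $o(1)$ is exactly sufficient here --- which is precisely what fails in a moment-level transfer. To prove the corollary as stated you need this process-level argument (or something equivalent); your approach would only work under a strengthened hypothesis quantifying the decay of $D(P\|R)$.
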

As it turns out, for the particular setting of high entropy designs, conditions (A2)-(A4) together with (C1) are sufficient for (C2)-(C4),
whereas~(A5) implies condition~\eqref{eq:cond CLT Nhat}.
The conditions in Corollary~\ref{cor:high entropy} have been formulated as weakly as possible.
They are implied by the usual conditions that one finds in the literature.
For instance, when $N/d_N=O(1)$ (e.g., see~\cite{Boistard_2012}) and $n/N\to\lambda>0$, then (A2)-(A4) are immediate.
Part~(iii) in Corollary~\ref{cor:high entropy} is similar to Proposition~1 in~\cite{conti2015}, where the Hellinger distance between~$P$ and~$R$ is used instead of~\eqref{def:divergence}.
It can be seen that the conditions in~\cite{conti2015} are sufficient for our conditions (B1)-(B2) in Proposition~\ref{prop:HT1}, (C1), (A1)-(A5), (HT3) and the existence of the almost sure limits in (HT2) and~(HJ2).

Things become even easier when the high entropy design has inclusion probabilities that do not depend on~$\omega$.
\begin{corollary}
\label{cor:high entropy deterministic}
Let $P$ be a high entropy design satisfying (A1)-(A5) with deterministic inclusion probabilities $\pi_1,\ldots,\pi_N$.
Suppose that conditions~(C1), (HT1), and
$\lim_{N\to\infty}
(n/N^2)
\sum_{i=1}^N
\left(
\pi_i^{-1}-1
\right)
=
\mu_{\pi1}.
$
hold.
Then
\begin{itemize}
\item[(i)] the conclusion of Proposition~\ref{prop:FCLT HT FN} holds;
\item[(ii)] if (HT3) is satisfied and $\mu_{\pi1}>0$, then the conclusion of Proposition~\ref{prop:FCLT HT F} holds;
\item[(iii)] if (HT3) is satisfied, then the conclusion of Proposition~\ref{prop:FCLT HJ FN} holds;
\item[(iv)] if (HT3) is satisfied and $\mu_{\pi1}>0$, then the conclusion of Proposition~\ref{prop:FCLT HJ F} holds.
\end{itemize}
\end{corollary}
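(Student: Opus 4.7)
The plan is to verify the hypotheses of Propositions~\ref{prop:FCLT HT FN}--\ref{prop:FCLT HJ F} from the assumptions of the corollary, and then invoke each one. Conditions~(C2)--(C4) have already been deduced from~(A1)--(A4) and~(C1) for high entropy designs in the proof of Corollary~\ref{cor:high entropy}, so they are inherited here. The requirement $n\to\infty$ in parts~(iii)--(iv) follows from~(HT3) together with $N\to\infty$ when $\lambda>0$, and from~(A5) when $\alpha>0$. Condition~(i) of Proposition~\ref{prop:FCLT HT FN} is hypothesised directly. Hence the real work is to verify condition~(ii) of Proposition~\ref{prop:FCLT HT FN} (shared by all four propositions) and, for parts~(iii)--(iv), condition~\eqref{eq:cond CLT Nhat}.

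The central step is condition~(ii). I would exploit~(A1) to transfer the computation from $P$ to the rejective design $R$ with the same first-order inclusion probabilities, whose second-order inclusion probabilities $\pi_{ij}^R$ admit H\'ajek's classical expansion
\[
\pi_{ij}^R-\pi_i\pi_j
=
-\frac{\pi_i(1-\pi_i)\pi_j(1-\pi_j)}{d_N}\bigl(1+o(1)\bigr),\qquad i\ne j.
\]
Under~(C1), this yields
\[
\frac{n}{N^2}\ssum_{i\ne j}\frac{\pi_{ij}^R-\pi_i\pi_j}{\pi_i\pi_j}
=
-\frac{n}{N^2 d_N}\Bigl[(N-n)^2-\sum_{i=1}^N(1-\pi_i)^2\Bigr]+o(1).
\]
The first bracketed term, divided by $N^2 d_N$, converges to $\alpha$ by~(A5); the second is dominated by $(n/d_N)(1/N)=O(1/N)$ by~(A3) and is negligible. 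A standard comparison of second-order inclusion probabilities for high entropy designs (as in~\cite{berger1998b} or in the proof of Corollary~\ref{cor:high entropy}) then allows replacing $\pi_{ij}^R$ by $\pi_{ij}$ in the sum, the lower bound $\pi_i\geq K_1 n/N$ from~(C1) being used to control the weights $1/(\pi_i\pi_j)$. This yields condition~(ii) with $\mu_{\pi2}=-\alpha$.

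For parts~(iii)--(iv), the additional ingredient~\eqref{eq:cond CLT Nhat} is obtained by adding the diagonal terms to the sum above: $(n/N^2)\sum_{i=1}^N(1-\pi_i)/\pi_i\to\mu_{\pi1}$ by hypothesis, so the full double sum converges to $\sigma_\pi^2:=\mu_{\pi1}-\alpha$, which is non-negative because it equals $n\,\mathrm{Var}_d(\widehat N/N)$. With condition~(ii) verified and~\eqref{eq:cond CLT Nhat} in hand, parts~(i)--(iv) of the corollary follow directly from Propositions~\ref{prop:FCLT HT FN},~\ref{prop:FCLT HT F},~\ref{prop:FCLT HJ FN} and~\ref{prop:FCLT HJ F}, respectively. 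The main technical obstacle is the transfer step from $\pi_{ij}^R$ to $\pi_{ij}$: the divergence bound~(A1) controls differences between $P$ and $R$ in $L^1$-type norms, whereas the quadratic sum in question is weighted by $1/(\pi_i\pi_j)$, of order $(N/n)^2$, so one must show that~(A1) combined with~(A3)--(A4) provides enough summability to keep the remainder negligible relative to the main contribution of order~$\alpha$.
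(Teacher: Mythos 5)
Your skeleton matches the paper's for the rejective-design part: conditions (C2)--(C4) come from (A2)--(A4) and (C1) exactly as in Corollary~\ref{cor:high entropy}, and your computation of condition~(ii) of Proposition~\ref{prop:FCLT HT FN} via H\'ajek's expansion, giving $\mu_{\pi2}=-\alpha$ from (A2)--(A3) and (A5), is the paper's computation (and your observation that (i)+(ii) then yield~\eqref{eq:cond CLT Nhat} with $\sigma_\pi^2=\mu_{\pi1}-\alpha\geq 0$ is consistent with how the deterministic propositions absorb that condition). Minor quibble: $n\to\infty$ does not need (HT3) with $\lambda>0$ or $\alpha>0$; it already follows from (A2), since $d_N=\sum_i\pi_i(1-\pi_i)\leq n$.

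The genuine gap is the step you yourself flag as "the main technical obstacle": passing from the rejective design $\widetilde R$ to the high entropy design $P$ by replacing $\pi_{ij}^R$ with $\pi_{ij}$ inside the weighted sum. That route does not close under (A1) alone. The best available pointwise bound is $|\pi_{ij}(P)-\pi_{ij}(\widetilde R)|\leq\sum_s|P(s)-\widetilde R(s)|\leq\sqrt{2D(P\|\widetilde R)}$, and feeding this into $\frac{n}{N^2}\ssum_{i\ne j}|\,\cdot\,|/(\pi_i\pi_j)$ with $\pi_i\geq K_1 n/N$ produces a remainder of order $(N^2/n)\sqrt{2D(P\|\widetilde R)}$, which requires $D(P\|\widetilde R)=o(n^2/N^4)$ --- far stronger than the bare convergence $D(P\|\widetilde R)\to 0$ assumed in (A1). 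The paper avoids this entirely: it never verifies condition~(ii), nor (C2)--(C4), for $P$ itself. Instead it establishes the weak convergence for the rejective design $\widetilde R$ matched to $P$'s first-order inclusion probabilities, and then transfers the \emph{conclusion} to $P$ in the bounded-Lipschitz metric, using that for $f\in\mathrm{BL}_1$ the argument of $f$ is the same functional of the sample under both designs, so $|\mathbb{E}_d f(\mathbb{G}_P^{\pi(P)})-\mathbb{E}_d f(\mathbb{G}_{\widetilde R}^{\pi(P)})|\leq\sum_s|P(s)-\widetilde R(s)|\leq\sqrt{2D(P\|\widetilde R)}\to 0$. To repair your argument you should replace the inclusion-probability comparison by this process-level transfer (as in the proof of Corollary~\ref{cor:high entropy}); as written, the remainder in your last step cannot be shown negligible from (A1), (A3)--(A4).
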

As before, conditions (A2)-(A4) together with (C1) are sufficient for (C2)-(C4),
whereas (A5) implies condition~(ii) of Propositions~\ref{prop:FCLT HT FN} and~\ref{prop:FCLT HT F}.
Part~(i) in Corollary~\ref{cor:high entropy deterministic} is similar to Proposition 1 in~\cite{conti2014},
where the Hellinger distance between $P$ and $R$ is used instead of~\eqref{def:divergence}.
It can be seen that the conditions in~\cite{conti2014} are sufficient for (B1)-(B2) in Proposition~\ref{prop:HT1}, (A1)-(A5), (HT3) and (i).

\subsection{Fixed size sampling designs with deterministic inclusion probabilities}
\label{subsec:fixed designs}
Conditions (C2)-(C4) put bounds on maximum correlations.
This is somewhat restrictive, and bounds on the average correlation may be more suitable for applications.
This can indeed be accomplished to some extent for fixed size sampling designs~$P$,
with inclusion probabilities $\pi_i$ that do not depend on~$\omega$.

Suppose there exists a $K>0$, such that for all $N=1,2,\ldots$,
\begin{itemize}
\item[(C2$^*$)]
for all $j=1,2,\ldots,N$:
$\displaystyle
\frac{n}{N}
\sum_{i\ne j}
\left|
\frac{\pi_{ij}-\pi_i\pi_j}{\pi_i\pi_j}
\right|
\leq K$,
\item[(C3$^*$)]
$\displaystyle\frac{n}{N^3}
\sssum_{(i,j,k)\in D_{3,N}}
\left|
\frac{\pi_{ijk}-\pi_i\pi_j\pi_k}{\pi_i\pi_j\pi_k}
\right|\leq K$.
\item[(C4$^*$)]
$
\displaystyle\frac{n^2}{N^4}
\left|
\sum_{(i,j,k,l)\in D_{4,N}}
\frac{\mathbb{E}_{d}\left[(\xi_i-\pi_i)(\xi_j-\pi_j)(\xi_k-\pi_k)(\xi_l-\pi_l)\right]}{\pi_i\pi_j\pi_k\pi_l}
\right|
\leq K$.
\end{itemize}
The summation in (C2$^*$) has a number of terms of the order $N$.
This means that typically the summands must decrease at rate $1/N$.
This is comparable to condition~(ii) in Proposition~\ref{prop:FCLT HT FN}.
Similarly for summands in the summation in~(C3$^*$).
The summands in (C4$^*$) have to overcome a factor of the order~$N^2$,
which will typically not be the case for general sampling designs.
However, according to Lemma~2 in~\cite{Boistard_2012}, the fourth order correlation
can be decomposed in terms of the type
\[
(-1)^{4-m}\frac{\pi_{i_1\cdots i_m}-\pi_{i_1}\cdots \pi_{i_m}}{\pi_{i_1}\cdots \pi_{i_m}},
\quad
m=2,3,4.
\]
Because these terms can be both negative and positive,
they may cancel each other in such a way that~(C4$^*$) does hold.
This is for instance the case for simple random sampling, e.g., see the discussion in Remarks~(iii) and~(iv) in~\cite{BreidtOpsomer_2000},
or for rejective sampling, see Proposition 1 in~\cite{Boistard_2012}.

By using Lemma 2 in~\cite{Boistard_2012} it follows that conditions (C2$^*$)-(C4$^*$)
are implied by (C2)-(C4).
The following corollary covers the results from Sections~\ref{sec:HT} and~\ref{sec:Hajek} under the weaker conditions (C2$^*$)-(C4$^*$),
for fixed size sampling designs with deterministic inclusion probabilities.

\begin{corollary}
\label{cor:fixed size}
Let $P$ be a fixed size sampling design with deterministic inclusion probabilities.
Suppose that (C1), (C2$^*$)-(C4$^*$), (HT1), hold, as well as conditions~(i) and~(ii) from Proposition~\ref{prop:FCLT HT FN}.
Then
\begin{itemize}
\item[(i)]
the conclusion of Proposition~\ref{prop:FCLT HT FN} holds;
\item[(ii)]
if (HT3) is satisfied and $\mu_{\pi1}>0$, then the conclusion of Proposition~\ref{prop:FCLT HT F} holds;
\item[(iii)]
if (HT3) is satisfied, then the conclusions of Propositions~\ref{prop:FCLT HJ FN} and~\ref{prop:FCLT HJ F} hold.
\end{itemize}
\end{corollary}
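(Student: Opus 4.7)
The plan is to mirror the existing arguments used to prove Propositions~\ref{prop:FCLT HT FN}, \ref{prop:FCLT HT F}, \ref{prop:FCLT HJ FN} and~\ref{prop:FCLT HJ F}, showing that under the fixed‑size deterministic setup, all places where (C2)--(C4) are actually invoked can be replaced by (C2$^*$)--(C4$^*$). As in those proofs, there are two ingredients: convergence of finite‑dimensional distributions and tightness of the relevant process in $D(\mathbb{R})$.

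For finite‑dimensional convergence I would proceed exactly as before. The limit of $nS_N^2$ in the variance Lemma~\ref{lem:variance HT} of~\cite{Boistard_2015} is obtained from conditions (C1), (i), (ii); no pointwise bound on the second order correlations is used, only the averaged expression $\frac{n}{N^2}\sum_{i\ne j}(\pi_{ij}-\pi_i\pi_j)/(\pi_i\pi_j)$, which is precisely controlled by (C2$^*$). Combined with (HT1) (and (HT3), (CLT) for the centered‑at‑$F$ case, respectively), the Cram\'er--Wold device then yields the finite‑dimensional limits with covariance functions $\mu_{\pi 1}F(s\wedge t)+\mu_{\pi 2}F(s)F(t)$ or with the extra $\lambda\{F(s\wedge t)-F(s)F(t)\}$ term, exactly as in Propositions~\ref{prop:FCLT HT FN} and~\ref{prop:FCLT HT F}.

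The main obstacle, and the reason this corollary is nontrivial, is tightness. In the proofs of the propositions, (C2)--(C4) are used to bound $\mathbb{E}_d\bigl[(\FHT(t)-\FHT(s))^2(\FHT(u)-\FHT(t))^2\bigr]$ by first pulling out the maximum of each higher order correlation and then summing over indices. Here I would instead expand the fourth moment directly as a quadruple sum over indices, splitting into the diagonal pieces (two or three distinct indices) and the genuine fourth‑order piece over $D_{4,N}$. The diagonal pieces reduce, after using (C1), to sums that are bounded by $(F(t)-F(s))(F(u)-F(t))$ times absolute second‑ and third‑order correlation sums, which are controlled in the averaged sense by (C2$^*$) and (C3$^*$). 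For the genuine $D_{4,N}$ part I would use the fixed‑size property and Lemma~2 of~\cite{Boistard_2012} to re‑express the fourth correlation in terms of $(\pi_{i_1\cdots i_m}-\pi_{i_1}\cdots\pi_{i_m})/(\pi_{i_1}\cdots\pi_{i_m})$, and then apply (C4$^*$) directly to the resulting signed sum; the point is that (C4$^*$) is precisely an averaged bound on this quantity, which allows the required $O\bigl((F(u)-F(s))^{\alpha}\bigr)$ bound for some $\alpha>1$ via an appropriate interpolation between two points where $|F(u)-F(s)|$ is small and the trivial pointwise bound. This gives Billingsley‑type tightness for $\sqrt{n}(\FHT-\FN)$ and $\sqrt{n}(\FHT-F)$, establishing parts~(i) and~(ii).

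For part~(iii) I would follow the routes used in the proofs of Propositions~\ref{prop:FCLT HJ FN} and~\ref{prop:FCLT HJ F}. Writing, as in~\eqref{eq:decompose HJ EP} and~\eqref{eq:relation GL and GHJ}, the H\'ajek process in terms of $\GL$ and the factor $N/\widehat N$, it suffices to establish weak convergence of $\GL$ and to show $\widehat N/N\to1$ in probability. The consistency of $\widehat N/N$ only requires a bound on its $\mathbb{P}_d$‑variance, which is exactly the averaged second‑order sum controlled by~(C2$^*$) together with~(C1). Tightness and finite‑dimensional convergence of $\GL$ use the same expansions as above applied to the centered indicators $\mathds{1}_{\{Y_i\le t\}}-F(t)$, and are handled by (C2$^*$)--(C4$^*$) in the same manner. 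The Slutsky argument embodied in~\eqref{eq:decompose HJ EP} and~\eqref{eq:relation GL and GHJ} then transfers weak convergence to $\sqrt n(\FHJ-\FN)$ and $\sqrt n(\FHJ-F)$, yielding the covariance structures stated in Propositions~\ref{prop:FCLT HJ FN} and~\ref{prop:FCLT HJ F}. The technically delicate step throughout is the bound on the fourth‑order term in~(C4$^*$), where the cancellation available for fixed‑size designs—exploited via Lemma~2 of~\cite{Boistard_2012}—is essential; without the fixed‑size assumption one cannot hope to replace pointwise maxima by averaged bounds.
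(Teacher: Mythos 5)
Your proposal is correct and follows essentially the same route as the paper: since $n$ and the $\pi_i$ are deterministic, the fourth-moment expectation in the tightness bound factorizes as $\mathbb{E}_d\times\mathbb{E}_m$, the model factor ($p_1p_2$, $p_1^2p_2$ or $p_1^2p_2^2$) is constant over each index set $D_{\nu,N}$ and can be pulled out, and the remaining signed design sums are controlled by the averaged conditions (C2$^*$)--(C4$^*$) together with (C1) and conditions (i)--(ii); after also re-verifying the variance lemma under (C2$^*$), the proofs of the four propositions go through unchanged. Two minor points: (C4$^*$) applies to the $D_{4,N}$ sum directly, without re-expressing the fourth-order correlation via Lemma~2 of~\cite{Boistard_2012}, and no interpolation argument is needed since $p_1p_2\leq(F(t_2)-F(t_1))^2$ already yields Billingsley's moment condition.
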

}

\section{Hadamard-differentiable functionals}
\label{sec:hadamard}
Theorem~\ref{th:FCLT HJ F} provides an elegant means to study
the limit behavior of estimators that can be described as $\phi(\FHJ)$,
where $\phi$ is a Hadamard-differentiable functional.
Given such a $\phi$, the functional delta-method, e.g., see Theorems~3.9.4 and~3.9.5 in~\cite{van_1996}
or Theorem~20.8 in~\cite{vandervaart1998},
enables one to establish the limit distribution of~$\phi(\FHJ)$.
Similarly, this holds for Theorems~\ref{th:FCLT HT FN}, \ref{th:FCLT HT F}, and~\ref{th:FCLT HJ FN},
or Propositions~\ref{prop:FCLT HT FN}, \ref{prop:FCLT HT F}, \ref{prop:FCLT HJ FN}, and~\ref{prop:FCLT HJ F}
in the special case of deterministic $n$  and inclusion probabilities.

We illustrate this by discussing the poverty rate.
This indicator has recently been revisited by~\cite{graftille2014} and~\cite{oguzalperberger2015}.
This example has also been discussed by~\cite{dell2008},
but under the assumption of weak convergence of $\sqrt{n}(\FHJ-\FN)$ to some centered continuous Gaussian process.
Note that this assumption is now covered by our Theorem~\ref{th:FCLT HJ FN} and Proposition~\ref{prop:FCLT HJ FN}.
Let $\mathbb{D}_{\phi}\subset D(\mathbb{R})$ consist of $F\in D(\mathbb{R})$ that are non-decreasing.
Then for $F\in \mathbb{D}_{\phi}$, the poverty rate is defined as
\begin{equation}
\label{def:poverty rate}
\phi(F)
=
F\left(\beta F^{-1}(\alpha)\right)
\end{equation}
for fixed $0<\alpha,\beta<1$, where
$F^{-1}(\alpha)=\inf\left\{t:F(t)\geq \alpha\right\}$.
Typical choices are $\alpha=0.5$ and $\beta=0.5$ (INSEE) or $\beta=0.6$ (EUROSTAT).
Its Hadamard derivative is given by
\begin{equation}
\label{eq:derivative phi}
\phi_F'(h)
=
-\beta\frac{f(\beta F^{-1}(\alpha))}{f(F^{-1}(\alpha))}
h(F^{-1}(\alpha))
+
h(\beta F^{-1}(\alpha)).
\end{equation}
See the supplement B in~\cite{Boistard_2015} for details.
We then have the following corollaries for the Horvitz-Thompson estimator~$\phi(\FHT)$
and the H\'ajek estimator~$\phi(\FHJ)$
for the poverty rate $\phi(F)$.
\begin{corollary}
\label{cor:poverty rate HT}
Let $\phi$ be defined by~\eqref{def:poverty rate}
and suppose that the conditions of Proposition~\ref{prop:FCLT HT F} hold.
Then, if $F$ is differentiable at $F^{-1}(\alpha)$,
the random variable $\sqrt{n}(\phi(\FHT)-\phi(F))$ converges in distribution to
a mean zero normal random variable with variance
\begin{equation}
\label{def:lim var poverty rate HT}
\begin{split}
\sigma_{\mathrm{HT},\alpha,\beta}^2
&=
\beta^2\frac{f(\beta F^{-1}(\alpha))^2}{f(F^{-1}(\alpha))^2}
\left(\gamma_{\pi1}\alpha+\gamma_{\pi2}\alpha^2\right)\\
&\quad+
\gamma_{\pi1}\phi(F)
+
\gamma_{\pi2}\phi(F)^2
-2\beta\frac{f(\beta F^{-1}(\alpha))}{f(F^{-1}(\alpha))}
\phi(F)
\big(
\gamma_{\pi1}+\gamma_{\pi2}\alpha
\big),
\end{split}
\end{equation}
where $\gamma_{\pi1}=\mu_{\pi1}+\lambda$ and $\gamma_{\pi2}=\mu_{\pi2}-\lambda$.
If in addition $n/N\to0$,
then $\sqrt{n}(\phi(\FHT)-\phi(\FN))$ converges in distribution to a mean zero normal random variable with variance $\sigma^2_{\mathrm{HT},\alpha,\beta}$.
\end{corollary}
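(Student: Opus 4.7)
The plan is to apply the functional delta-method to the weak convergence statement of Proposition~\ref{prop:FCLT HT F}, combined with a Slutsky-type argument for the second assertion.

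First I would verify that the poverty rate functional $\phi$ defined in~\eqref{def:poverty rate} is Hadamard-differentiable at $F$ tangentially to $C(\mathbb{R})$, with derivative given by~\eqref{eq:derivative phi}. The map $\phi$ is the composition $F\mapsto(F,F^{-1})\mapsto F(\beta F^{-1}(\alpha))$. Hadamard differentiability of the inverse map at $F$ requires that $F$ be differentiable at $F^{-1}(\alpha)$ with $f(F^{-1}(\alpha))>0$, which is assumed; the chain rule for Hadamard differentiability then yields the formula in~\eqref{eq:derivative phi}. Details are deferred to the supplement B in~\cite{Boistard_2015}. Proposition~\ref{prop:FCLT HT F} states that $\sqrt{n}(\FHT-F)$ converges weakly in $D(\mathbb{R})$ to the Gaussian process $\mathbb{G}^{\mathrm{HT}}$, whose sample paths lie in the tangent space required for the delta-method. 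Hence Theorem~3.9.4 in~\cite{van_1996} yields
\[
\sqrt{n}\bigl(\phi(\FHT)-\phi(F)\bigr)\;\xrightarrow{d}\;\phi_F'(\mathbb{G}^{\mathrm{HT}}).
\]

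Next I would identify the distribution of the limit. Since $\phi_F'$ is a continuous linear functional and $\mathbb{G}^{\mathrm{HT}}$ is a centered Gaussian process, $\phi_F'(\mathbb{G}^{\mathrm{HT}})$ is a centered normal random variable. To compute its variance, abbreviate $s=F^{-1}(\alpha)$, $t=\beta F^{-1}(\alpha)$, and $a=-\beta f(t)/f(s)$, so that $\phi_F'(\mathbb{G}^{\mathrm{HT}})=a\mathbb{G}^{\mathrm{HT}}(s)+\mathbb{G}^{\mathrm{HT}}(t)$. Using $F(s)=\alpha$, $F(t)=\phi(F)$, and (assuming $F^{-1}(\alpha)>0$ so that $t\leq s$) $F(s\wedge t)=\phi(F)$, together with the covariance function $(\mu_{\pi1}+\lambda)F(\cdot\wedge\cdot)+(\mu_{\pi2}-\lambda)F(\cdot)F(\cdot)$ from Proposition~\ref{prop:FCLT HT F}, a direct expansion of
\[
a^2\,\mathrm{Var}(\mathbb{G}^{\mathrm{HT}}(s))+\mathrm{Var}(\mathbb{G}^{\mathrm{HT}}(t))+2a\,\mathrm{Cov}(\mathbb{G}^{\mathrm{HT}}(s),\mathbb{G}^{\mathrm{HT}}(t))
\]
with $\gamma_{\pi1}=\mu_{\pi1}+\lambda$ and $\gamma_{\pi2}=\mu_{\pi2}-\lambda$ gives exactly $\sigma_{\mathrm{HT},\alpha,\beta}^2$ in~\eqref{def:lim var poverty rate HT}.

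For the second assertion, when $n/N\to 0$, by the classical Donsker theorem $\sqrt{N}(\FN-F)$ converges weakly in $D(\mathbb{R})$ to an $F$-Brownian bridge, so
\[
\sqrt{n}(\FN-F)=\sqrt{n/N}\cdot\sqrt{N}(\FN-F)\;\xrightarrow{P}\;0
\]
in $D(\mathbb{R})$. Hadamard differentiability of $\phi$ then yields $\sqrt{n}(\phi(\FN)-\phi(F))\to 0$ in probability, and writing
\[
\sqrt{n}(\phi(\FHT)-\phi(\FN))=\sqrt{n}(\phi(\FHT)-\phi(F))-\sqrt{n}(\phi(\FN)-\phi(F)),
\]
Slutsky's theorem gives the same normal limit.

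The only delicate point is the Hadamard differentiability of $\phi$, in particular the composition with the quantile map, but this is a standard argument (essentially Lemma~21.3 in~\cite{vandervaart1998}) and is spelled out in the supplement; the remaining steps are routine once the delta-method is in place.
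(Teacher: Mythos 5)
Your proposal is correct and follows essentially the same route as the paper: Hadamard differentiability of $\phi$ (deferred to the supplement), the functional delta-method via Theorem~3.9.4 of~\cite{van_1996} applied to the weak limit from Proposition~\ref{prop:FCLT HT F}, the same quadratic-form expansion of the covariance function to obtain $\sigma^2_{\mathrm{HT},\alpha,\beta}$, and the same decomposition $\sqrt{n}(\phi(\FHT)-\phi(\FN))=\sqrt{n}(\phi(\FHT)-\phi(F))-\sqrt{n}(\phi(\FN)-\phi(F))$ with $n/N\to0$ killing the second term. The only cosmetic difference is that the paper first establishes $\sqrt{N}(\phi(\FN)-\phi(F))\rightsquigarrow\phi_F'(\mathbb{G}_F)$ and then multiplies by $\sqrt{n/N}\to0$, whereas you apply the delta-method directly at rate $\sqrt{n}$ with a degenerate limit; both are valid.
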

\begin{corollary}
\label{cor:poverty rate HJ}
Let $\phi$ be defined by~\eqref{def:poverty rate}.
and suppose that the conditions of Proposition~\ref{prop:FCLT HJ F} hold.
Then, if $F$ is differentiable at $F^{-1}(\alpha)$,
the random variable $\sqrt{n}(\phi(\FHJ)-\phi(F))$ converges in distribution to
a mean zero normal random variable with variance
\begin{equation}
\label{def:lim var poverty rate HJ}
\begin{split}
\sigma_{\mathrm{HJ},\alpha,\beta}^2
&=
\beta^2\frac{f(\beta F^{-1}(\alpha))^2}{f(F^{-1}(\alpha))^2}\gamma_{\pi1}\alpha(1-\alpha)\\
&\quad+
\gamma_{\pi1}
\phi(F)
\big(
1-\phi(F))
\big)
-2\beta\frac{f(\beta F^{-1}(\alpha))}{f(F^{-1}(\alpha))}\phi(F)
\gamma_{\pi1}(1-\alpha),
\end{split}
\end{equation}
where $\gamma_{\pi1}=\mu_{\pi1}+\lambda$.
If in addition $n/N\to0$,
then $\sqrt{n}(\phi(\FHJ)-\phi(\FN))$ converges in distribution to a mean zero normal random variable with variance $\sigma^2_{\mathrm{HJ},\alpha,\beta}$.
\end{corollary}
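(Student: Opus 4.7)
The plan is to apply the functional delta-method for Hadamard-differentiable functionals to the weak convergence provided by Proposition~\ref{prop:FCLT HJ F}. First I would invoke Proposition~\ref{prop:FCLT HJ F} to obtain that $\sqrt{n}(\FHJ-F)$ converges weakly in $D(\mathbb{R})$ to a mean zero Gaussian process $\mathbb{G}^{\mathrm{HJ}}$ with covariance function $\gamma_{\pi1}\big(F(s\wedge t)-F(s)F(t)\big)$, where $\gamma_{\pi1}=\mu_{\pi1}+\lambda$. Since $\xi_i/\pi_i\ge 0$, the realizations of $\FHJ$ are non-decreasing c\`adl\`ag functions, hence lie in~$\mathbb{D}_\phi$, so that $\phi$ is applicable and Hadamard-differentiable at~$F$ tangentially to $C(\mathbb{R})$ with derivative $\phi'_F$ given in~\eqref{eq:derivative phi} (a fact referenced in supplement~B of~\cite{Boistard_2015}). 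Applying Theorem~3.9.4 in~\cite{van_1996} then yields that $\sqrt{n}(\phi(\FHJ)-\phi(F))$ converges in distribution to $\phi'_F(\mathbb{G}^{\mathrm{HJ}})$.

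Next I would compute the variance of this limit. Writing $q=F^{-1}(\alpha)$ and $r=\beta F^{-1}(\alpha)$, we have $\phi'_F(\mathbb{G}^{\mathrm{HJ}})=-\beta\,(f(r)/f(q))\,\mathbb{G}^{\mathrm{HJ}}(q)+\mathbb{G}^{\mathrm{HJ}}(r)$. Since $0<\beta<1$ gives $r<q$, so $F(q\wedge r)=F(r)=\phi(F)$, and using $F(q)=\alpha$ I would expand
\[
\mathrm{Var}\big(\phi'_F(\mathbb{G}^{\mathrm{HJ}})\big)
=
c^2\,\gamma_{\pi1}\alpha(1-\alpha)
+2c\,\gamma_{\pi1}\phi(F)(1-\alpha)
+\gamma_{\pi1}\phi(F)(1-\phi(F)),
\]
with $c=-\beta f(r)/f(q)$. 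Substituting~$c$ reproduces exactly formula~\eqref{def:lim var poverty rate HJ}, which completes the first assertion.

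For the second assertion I would write
\[
\sqrt{n}\big(\phi(\FHJ)-\phi(\FN)\big)
=
\sqrt{n}\big(\phi(\FHJ)-\phi(F)\big)
-
\sqrt{n/N}\cdot\sqrt{N}\big(\phi(\FN)-\phi(F)\big).
\]
By Donsker's theorem, $\sqrt{N}(\FN-F)$ converges weakly to an $F$-Brownian bridge, so the classical functional delta-method gives $\sqrt{N}(\phi(\FN)-\phi(F))=O_{\mathbb{P}_m}(1)$. Under $n/N\to 0$, the factor $\sqrt{n/N}\to 0$ and Slutsky yields that the second term is $o_{\mathbb{P}_{d,m}}(1)$, so $\sqrt{n}(\phi(\FHJ)-\phi(\FN))$ has the same limit as $\sqrt{n}(\phi(\FHJ)-\phi(F))$.

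The main obstacle is to justify rigorously the application of the functional delta-method in the joint model/design setting and to confirm the Hadamard differentiability of~$\phi$ with the explicit derivative in~\eqref{eq:derivative phi}; the latter is essentially a composition of the quantile map at~$\alpha$ (which is Hadamard-differentiable at~$F$ when~$F$ is differentiable at $F^{-1}(\alpha)$ with $f(F^{-1}(\alpha))>0$) with the evaluation map at~$r$, and is handled in the supplement. Once these ingredients are in place, the remainder is a straightforward Gaussian variance computation.
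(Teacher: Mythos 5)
Your proposal is correct and follows essentially the same route as the paper: the paper proves Corollary~\ref{cor:poverty rate HT} by combining Hadamard differentiability of $\phi$ (established in the supplement) with Theorem~3.9.4 of~\cite{van_1996} and the relevant weak convergence result, then handles the $\phi(\FN)$-centred version via the same decomposition under $n/N\to0$, and states that the H\'ajek case is identical up to substituting the covariance $\gamma_{\pi1}\big(F(s\wedge t)-F(s)F(t)\big)$ from Proposition~\ref{prop:FCLT HJ F}. Your variance expansion and the Slutsky argument for the second assertion match the paper's computation exactly.
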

\section{Simulation study}
\label{sec:simulation}
The objective of this simulation study is to investigate the performance of the Horvitz-Thompson (HT)
and the H\'ajek (HJ) estimators for the poverty rate, as defined in~\eqref{def:poverty rate},
at the finite population level and at the super-population level.
The asymptotic results from Corollary~\ref{cor:poverty rate HT} and~\ref{cor:poverty rate HJ}
are used to obtain variance estimators whose performance is also assessed in this small study.

Six simulation schemes are implemented with different population sizes and (design-based) expected sample sizes,
namely $N=10\,000$ and $1000$ and $n=500$, $100$, and $50$.
The samples
are drawn according to three different sampling designs.
The first one is simple random sampling without replacement~(SI) with size~$n$.
The second design is Bernoulli sampling~(BE) with parameter~$n/N$.
The third one is Poisson sampling~(PO) with first order inclusion probabilities equal to $0.4n/N$
for the first half of the population and equal to $1.6n/N$ for the other half of the population,
where the population is randomly ordered.
The first order inclusion probabilities are deterministic for the three designs and the sample size $n_s$
is fixed for the SI design, while it is random with respect to the design for the BE and PO designs.
Moreover, the SI and BE designs are equal probability designs, while PO is an unequal probability design.
The results are obtained by replicating $N_R=1000$ populations. For each population, $n_R=1000$ samples are drawn according to the different designs.
The variable of interest $Y$ is generated for each population according to an exponential distribution with rate parameter equal to one.
For this distribution and given $\alpha$ and $\beta$, the poverty rate has an explicit expression $\phi(F)= 1-\exp(\beta\ln(1-\alpha))$.
In what follows, $\alpha=0.5$ and $\beta=0.6$ and $\phi(F)\simeq 0.34$.
These are the same values for $\alpha$ and $\beta$ as considered in~\cite{dell2008}.

The Horvitz-Thompson estimator and H\'ajek estimator for $\phi(F)$ or $\phi(\FN)$ are denoted by~$\widehat{\phi}_\mathrm{HT}$ and~$\widehat{\phi}_\mathrm{HJ}$, respectively.
They are obtained by plugging in the empirical c.d.f.'s $\FHT$ and $\FHJ$, respectively, for $F$ in expression~\eqref{def:poverty rate}.
The empirical quantiles are calculated by using the function \texttt{wtd.quantile}
from the R package \texttt{Hmisc} for the H\'ajek estimator and by adapting the function for the Horvitz-Thompson estimator.
For the SI sampling design, the two estimators are the same.
The performance of the estimators for the parameters $\phi(F)$ and $\phi(\FN)$
is evaluated using some Monte-Carlo relative bias (RB).
This is reported in Table~\ref{RB_povr}.
\begin{table}[t]
\caption{RB (in \%) of the HT and the HJ estimators for the finite population $\phi(\FN)$ and the super-population $\phi(F)$ poverty rate parameter}
\label{RB_povr}
\begin{tabular}{cccrrrrrrrrr}
\hline
 \multicolumn{3}{c}{\mbox{}} & \multicolumn{3}{c}{$N=10\,000$} & \multicolumn{3}{c}{$N=1000$}\\
 \multicolumn{3}{c}{\mbox{}} & $n=500$ & $n=100$ & $n=50$ & $n=500$ & $n=100$ & $n=50$\\
\hline
SI  & HT-HJ & $\phi(\FN)$ & $-$0.17 & $-$0.89 & $-$1.82 & $-$0.05 & $-$0.84 & $-$1.62 \\
    &       & $\phi(F)$   & $-$0.20 & $-$0.91 & $-$1.86 & $-$0.18 & $-$0.72 & $-$1.85 \\
\hline
    & HT    & $\phi(\FN)$ & $-$0.12 & $-$0.66 & $-$1.29 &  0.01 & $-$0.65 & $-$1.12\\
BE  &       & $\phi(F)$   & $-$0.15 & $-$0.68 & $-$1.34 & $-$0.12 & $-$0.54 & $-$1.36\\
    & HJ    & $\phi(\FN)$ & $-$0.17 & $-$0.92 & $-$1.87 & $-$0.04 & $-$0.88 & $-$1.68\\
    &       & $\phi(F)$   & $-$0.20 & $-$0.93 & $-$1.92 & $-$0.17 & $-$0.76 & $-$1.91\\
\hline
    & HT    & $\phi(\FN)$ & $-$0.05 & $-$1.05 & $-$2.06 & $-$0.06 & $-$0.30 & $-$0.37\\
PO  &       & $\phi(F)$   & $-$0.08 & $-$1.07 & $-$2.11 & $-$0.19 & $-$0.19 & $-$0.63\\
    & HJ    & $\phi(\FN)$ & $-$0.20 & $-$1.27 & $-$2.95 & $-$0.04 & $-$1.08 & $-$1.99\\
    &       & $\phi(F)$   & $-$0.23 & $-$1.28 & $-$3.00 & $-$0.17 & $-$0.97 & $-$2.23\\
\hline
\end{tabular}
\end{table}
When estimating the super-population parameter $\phi(F)$, if  $\widehat\phi_{ij}$ denotes the estimate
(either $\widehat{\phi}_\mathrm{HT}$ or $\widehat{\phi}_\mathrm{HJ}$) for the $i$th generated population and the $j$th drawn sample,
the Monte Carlo relative bias of $\widehat\phi$ in percentages has the following expression
\[
\text{RB}_F(\widehat\phi)
=
\frac{100}{N_R\, n_R}
\sum_{i=1}^{N_R}
\sum_{j=1}^{n_R}
\frac{\widehat\phi_{ij}-\phi(F)}{\phi(F)}.
\]
When estimating the finite population parameter $\phi(\FN)$,
the parameter depends on the generated population $N_i$, for each $i=1,\ldots,N_R$,
and will be denoted by $\phi(\mathbb{F}_{N_i})$.
The Monte Carlo relative bias of $\widehat\phi$ is then computed by
replacing $F$ by $\mathbb{F}_{N_i}$ in the above expression.
Concerning the relative biases reported in Table \ref{RB_povr}, the values are small and never exceed 3\%.
As expected, these values increase when~$n$ decreases.
When the centering is relative to $\phi(\FN)$, the relative bias is in general somewhat smaller than when centering with $\phi(F)$.
This behavior is most prominent when $N=1000$ and $n=500$, which suggests that the estimates are typically closer to the population poverty rate
$\phi(\FN)$ than to the model parameter $\phi(F)$.
The H\'ajek estimator has a larger relative bias than the Horvitz-Thompson estimator in all situations but in particular for the
Poisson sampling design when the size of the population is 1000.
Note that all values in Table~\ref{RB_povr} are negative, which illustrates the fact that the estimators typically
underestimate the population and model poverty rates.

In Table~\ref{RB_var_povr},
\begin{table}[t]
\caption{RB (in \%) for the variance estimator of the HT and the HJ estimators for the poverty rate parameter}
\label{RB_var_povr}
\begin{tabular}{ccrrrrrrrrr}
\hline
 \multicolumn{2}{c}{\mbox{}} & \multicolumn{3}{c}{$N=10\,000$} & \multicolumn{3}{c}{$N=1000$}\\
 \multicolumn{2}{c}{\mbox{}} & $n=500$ & $n=100$ & $n=50$ & $n=500$ & $n=100$ & $n=50$\\
\hline
SI  & HT-HJ & $-$2.21 & $-$3.08 & $-$2.97 & $-$2.25 & $-$3.26 & $-$3.00 \\
\hline
BE  & HT    & $-$4.15 & $-$5.11 & $-$4.21 & $-$3.31 & $-$5.11 & $-$4.19 \\
    & HJ    & $-$2.22 & $-$3.06 & $-$3.03 & $-$2.26 & $-$3.24 & $-$3.03 \\
\hline
PO  & HT    & $-$4.43 & $-$4.96 & $-$3.45 & $-$3.74 & $-$5.72 & $-$4.59 \\
    & HJ    & $-$2.36 & $-$3.43 & $-$3.36 & $-$2.44 & $-$3.75 & $-$4.13 \\
\hline
\end{tabular}
\end{table}
the estimators of  the variance of $\widehat{\phi}_\mathrm{HT}$ and~$\widehat{\phi}_\mathrm{HJ}$ are obtained by plugging in
the empirical c.d.f.'s $\FHT$ and $\FHJ$, respectively, for $F$ in the expressions~\eqref{def:lim var poverty rate HT} and~\eqref{def:lim var poverty rate HJ}.
To estimate $f$ in the variance of~$\widehat{\phi}_\mathrm{HJ}$, we follow~\cite{bergerskinner2003},
who propose a H\'ajek type kernel estimator with a Gaussian kernel function.
For the variance of~$\widehat{\phi}_\mathrm{HT}$, we use a corresponding Horvitz-Thompson estimator by replacing $\widehat{N}$ by $N$.
Based on~\cite{silverman1986}, pages 45-47, we choose $b=0.79Rn_s^{-1/5}$, where $R$ denotes the interquartile range.
This differs from~\cite{bergerskinner2003}, who propose a similar bandwidth of the order $N^{-1/5}$.
However, this severely underestimates the optimal bandwidth, leading to large variances of the kernel estimator.
Usual bias variance trade-off computations show that the optimal bandwidth is of the order $n_s^{-1/5}$.

For the SI sampling design, \eqref{def:lim var poverty rate HT} and~\eqref{def:lim var poverty rate HJ} are identical
and can be calculated in an explicit way using the fact that $\mu_{\pi1}+\lambda=1$ and $\mu_{\pi2}-\lambda=-1$.
For the BE design, $\mu_{\pi1}+\lambda=1$, whereas for Poisson sampling,
the value $(n/N^{2})\sum_{i=1}^N 1/\pi_i$ is taken for $\mu_{\pi1}+\lambda$.
For these designs, $\mu_{\pi2}-\lambda=-\lambda$, where we take $n/N$ as the value of $\lambda$.

In order to compute the relative bias of the variance estimates, the asymptotic variance is taken as reference.
This asymptotic variance $\mbox{AV}(\widehat\phi)$ of the estimator $\widehat\phi$
(either $\widehat{\phi}_\mathrm{HT}$ or $\widehat{\phi}_\mathrm{HJ}$)
is computed from~\eqref{def:lim var poverty rate HT} and~\eqref{def:lim var poverty rate HJ}.
The expressions $f(\beta F^{-1}(\alpha))$ and $f(F^{-1}(\alpha))$ are explicit in the case of an exponential distribution.
Furthermore, for $\mu_{\pi1}+\lambda$ and $\mu_{\pi2}-\lambda$ we use the same expressions as mentioned above.
The Monte Carlo relative bias of the variance estimator $\widehat{\text{AV}}(\widehat\phi)$ in percentages,
is defined by
\[
\text{RB}(\widehat{\text{AV}}(\widehat\phi))
=
\frac{100}{N_R\, n_R}
\sum_{i=1}^{N_R}
\sum_{j=1}^{n_R}
\frac{\widehat{\text{AV}}(\widehat\phi_{ij})-\text{AV}(\widehat\phi)}{\text{AV}(\widehat\phi)},
\]
where $\widehat{\text{AV}}(\widehat\phi_{ij})$ denotes the variance estimate for the $i$th generated population and the $j$th drawn sample.

Table~\ref{CR_povr}
\begin{table}[t]
\caption{Coverage probabilities (in \%) for 95\% confidence intervals of the HT and the HJ estimators for the finite population $\phi(\FN)$ and the super-population $\phi(F)$ poverty rate parameter}
\label{CR_povr}
\begin{tabular}{cccrrrrrrrr}
\hline
 \multicolumn{3}{c}{\mbox{}} & \multicolumn{3}{c}{$N=10\,000$} & \multicolumn{3}{c}{$N=1000$}\\
 \multicolumn{3}{c}{\mbox{}} & $n=500$ & $n=100$ & $n=50$ & $n=500$ & $n=100$ & $n=50$\\
\hline
SI  & HT-HJ & $\phi(\FN)$ & 95.2 & 94.4 & 93.5 & 98.8 & 95.1 & 94.6\\
    &       & $\phi(F)$   & 94.6 & 93.2 & 92.2 & 94.7 & 93.2 & 92.0\\
\hline
    & HT    & $\phi(\FN)$ & 94.9 & 94.3 & 94.6 & 98.4 & 94.8 & 94.6\\
BE  &       & $\phi(F)$   & 94.4 & 93.7 & 94.9 & 94.6 & 93.6 & 94.7\\
    & HJ    & $\phi(\FN)$ & 95.1 & 94.3 & 93.9 & 98.7 & 94.9 & 94.2\\
    &       & $\phi(F)$   & 94.7 & 94.2 & 93.9 & 94.7 & 94.2 & 93.9\\
\hline
    & HT    & $\phi(\FN)$ & 94.5 & 94.2 & 94.3 & 96.8 & 94.0 & 93.6\\
PO  &       & $\phi(F)$   & 94.5 & 94.0 & 94.3 & 94.6 & 93.6 & 93.5\\
    & HJ    & $\phi(\FN)$ & 94.8 & 93.9 & 93.6 & 97.2 & 94.2 & 93.3\\
    &       & $\phi(F)$   & 94.6 & 93.9 & 93.6 & 94.6 & 93.9 & 93.2\\
\hline
\end{tabular}
\end{table}
gives the Monte-Carlo coverage probabilities for a nominal coverage probability of 95\% for the two parameters $\phi(\FN)$ and $\phi(F)$, the Horvitz-Thompson and the H\'ajek estimators and the different simulation schemes.
In general the coverage probabilities are somewhat smaller than 95\%, which is due to the underestimation of the asymptotic variance,
as can be seen from Table~\ref{RB_var_povr}.
The case $N=1000$ and $n=500$ for $\widehat{\phi}_\mathrm{HJ}$ forms an exception,
which is probably due to the fact that in this case $\lambda=n/N$ is far from zero, so that
the limit distribution of $\sqrt{n}(\phi(\FHT)-\phi(\FN))$ and $\sqrt{n}(\phi(\FHJ)-\phi(\FN))$ has a larger variance than the ones reported in Corollaries~\ref{cor:poverty rate HT}
and~\ref{cor:poverty rate HJ}.
When looking at Table~\ref{RB_var_povr}, the relative biases are smaller than 5\% when $n$ is 500.
The biases are larger for the Horvitz-Thompson estimator than for the H\'ajek estimator.
Again all relative biases are negative, which illustrates the fact that the asymptotic variance is typically underestimated.

\section{Discussion}
\label{sec:related}
{
In the appendix of~\cite{lin2000} the author remarks ``To our knowledge there does not exist a general theory on conditions required for the tightness and weak convergence
of Horvitz-Thompson processes."
One purpose of this paper has been to obtain these type of results in such a way that they are potentially applicable to a large class of {single-stage} unequal probability sampling designs.
Conditions (C2)-(C4) play a crucial role in this, as they establish the tightness of the processes involved.
The main motivation for the way they are formulated is to incorporate {single-stage} sampling designs which allow the sample size and/or the inclusion probabilities to depend on $\omega$,
which will be the case if they depend on the auxiliary variables $Z_i$.
These conditions trivially hold for simple sampling designs, but also for rejective sampling, which enables us to obtain weak convergence of the H\'ajek and Horvitz-Thompson processes under
high entropy designs.
Further extensions to more complex designs are beyond the scope of the present investigation,
but we believe that results similar to those described in Sections~\ref{sec:HT}, \ref{sec:Hajek}, and~\ref{sec:examples}, would continue to hold under reasonable assumptions.

%multistage
For instance multistage sampling designs deserve attention.
The recent paper~\cite{Chauvet_2015} gives some asymptotic results in the case of simple random sampling without replacement at the first stage and
with arbitrary designs at further stages.
\cite{escobarberger2013} gives also some consistency results for a particular two-stage fixed sample size design.
The clusters are drawn using {sampling without replacement with a probability proportional to the size design and the secondary units are drawn using a
simple random sampling without replacement} within each sampled cluster.
This leads to a self-weighted design.
Similar designs would be worth considering in order to generalize our functional limit theorems to multistage sampling.

Stratified sampling is also of importance.
Asymptotics in the case of stratified simple random sampling without replacement is studied in~\cite{Bickel_1984},
when the number of strata is bounded and~in \cite{Krewski_1981} when the number of strata tends to infinity.
More recently, consistency results are obtained in~\cite{Berger_2011} for large entropy designs when the number of strata is bounded.
It would be of particular interest to generalize our functional asymptotic results to such stratified designs.

Our results rely on the assumption that the sample selection process and the super-population model characteristic are independent given the design variables.
It means that the sampling is non-informative~\cite{Pfeffermann_2009}.
Our results do not directly generalize to informative sampling and further research is needed for such sampling designs.
Also functional CLT's for processes corresponding to other estimators,
such as regression and calibration estimators~(\cite{Deville_1992}) {deserve} attention.
}

\section{Proofs}
\label{sec:proofs}
We will use Theorem 13.5 from~\cite{Billingsley_1999}, which requires convergence of finite dimensional
distributions and a tightness condition (see (13.14) in~\cite{Billingsley_1999}.
To obtain weak convergence of the finite dimensional distributions, we use condition (HT1) in combination with the
Cr\'amer-Wold device, see Lemmas~\ref{lem:fidis HT FN}, \ref{lem:fidis HT F}, and~\ref{lem:fidis HT F deterministic}.
Details of their proofs can be found in the supplement A in~\cite{Boistard_2015}.

We will now establish the tightness condition, as stated in the following lemma.
\begin{lemma}
\label{lem:tightness HT FN}
Let $Y_1,\ldots,Y_N$ be i.i.d.~random variables with c.d.f.~$F$ and empirical c.d.f.~$\FN$ and let $\FHT$ be defined according to~\eqref{eq:HT cdf}.
Let $\mathbb{X}_N=\sqrt{n}(\FHT-\FN)$ and suppose that (C1)-(C4) hold.
Then there exists a constant $K>0$ independent of~$N$, such that for any $t_1$, $t_2$ and $-\infty<t_1\leq t\leq t_2<\infty$,
\[
\mathbb{E}_{d,m}
\left[
\left(
\mathbb{X}_N(t)-\mathbb{X}_N(t_1)
\right)^2
\left(
\mathbb{X}_N(t_2)-\mathbb{X}_N(t)
\right)^2
\right]
\leq
K\Big(F(t_2)-F(t_1)\Big)^2.
\]
%for $-\infty<t_1\leq t\leq t_2<\infty$.
\end{lemma}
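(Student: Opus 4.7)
The plan is to bound the fourth moment directly by expanding and then controlling each contribution according to the coincidence pattern of the four indices. Writing $W_i = \xi_i/\pi_i - 1$, $A_i = \mathds{1}_{\{t_1 < Y_i \le t\}}$, and $B_i = \mathds{1}_{\{t < Y_i \le t_2\}}$, I would represent
\[
\mathbb{X}_N(t)-\mathbb{X}_N(t_1)=\frac{\sqrt{n}}{N}\sum_{i=1}^N W_iA_i,
\qquad
\mathbb{X}_N(t_2)-\mathbb{X}_N(t)=\frac{\sqrt{n}}{N}\sum_{i=1}^N W_iB_i,
\]
and expand the target expectation into a quadruple sum over $(i,j,k,l)$. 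The first key observation is that $A_iB_i\equiv 0$ because the two intervals are disjoint, so any term in which an index in $\{i,j\}$ coincides with an index in $\{k,l\}$ vanishes after conditioning on $\omega$. What remains falls into four cases: (I) $i,j,k,l$ all distinct; (II) $i=j$, $k\ne l$, $i\notin\{k,l\}$; (III) the symmetric case $i\ne j$, $k=l$; and (IV) $i=j$, $k=l$ with $i\ne k$.

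In case~(I), conditions (C4) and (C1) yield $|\mathbb{E}_d(W_iW_jW_kW_l)| = O(n^{-2})$, which combined with the independence factorization $\mathbb{E}_m[A_iA_jB_kB_l]=(F(t)-F(t_1))^2(F(t_2)-F(t))^2$ and the $n^2/N^4$ prefactor gives a contribution of constant order in $(F(t)-F(t_1))^2(F(t_2)-F(t))^2$. For the cases in which $W_i^2$ appears, I would exploit the fact that $\xi_i$ is Bernoulli to obtain the affine identity
\[
W_i^{2}=\frac{1-\pi_i}{\pi_i}+\frac{1-2\pi_i}{\pi_i}\,W_i,
\]
which reexpresses $\mathbb{E}_d(W_i^2W_kW_l)$ as a linear combination of $\mathbb{E}_d(W_kW_l)$ and $\mathbb{E}_d(W_iW_kW_l)$, and $\mathbb{E}_d(W_i^2W_k^2)$ as a linear combination of $1$ and $\mathbb{E}_d(W_iW_k)$. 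Together with (C2), (C3) and the lower bound $\pi_i \ge K_1 n/N$ coming from~(C1), this yields $|\mathbb{E}_d(W_i^2W_kW_l)|=O(N/n^2)$ and $|\mathbb{E}_d(W_i^2W_k^2)|=O(N^2/n^2)$.

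Taking $\mathbb{E}_m$ of the indicator products and using independence of the $Y_i$'s, the surviving super-population factors in cases (II)--(IV) are $(F(t)-F(t_1))(F(t_2)-F(t))^2$, $(F(t)-F(t_1))^2(F(t_2)-F(t))$ and $(F(t)-F(t_1))(F(t_2)-F(t))$ respectively. Multiplying the $n^2/N^4$ prefactor by the number of index tuples in each case (of order $N^4$, $N^3$, $N^3$, $N^2$) and by the correlation bounds just derived, the powers of $n$ and $N$ cancel exactly, leaving in each case a constant multiple of a product of $F$-increments of total degree at least two. The elementary inequality $(F(t)-F(t_1))(F(t_2)-F(t))\le\tfrac14(F(t_2)-F(t_1))^2$ then reduces every such product to $K(F(t_2)-F(t_1))^2$, for a constant $K$ depending only on $K_1$ and $K_3$.

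The main obstacle is the bookkeeping in the cases involving $W_i^2$: the crude bound $|W_i| \le 1/\pi_i \le N/(K_1 n)$ introduces an extra factor $N/n$ that destroys tightness, so the affine reduction of $W_i^2$ is indispensable. It recasts every surviving fourth-order moment purely in terms of the correlations controlled by (C2)--(C4), and only then do the counts of index tuples in each case match exactly the orders of $n$ and $N$ available in the bounds.
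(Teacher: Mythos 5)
Your proposal is correct and follows essentially the same route as the paper: expand the fourth moment into a quadruple sum, use $A_iB_i\equiv 0$ to discard tuples where the two index pairs meet, classify the survivors by coincidence pattern, reduce the squared factors $W_i^2$ to lower-order correlations (your affine identity is exactly the explicit formulas for $\mathbb{E}_d(\xi_i-\pi_i)^2(\xi_k-\pi_k)^2$ and $\mathbb{E}_d(\xi_i-\pi_i)(\xi_j-\pi_j)(\xi_k-\pi_k)^2$ that the paper writes out), apply (C1)--(C4) together with the tuple counts, and finish with $p_1p_2\leq(F(t_2)-F(t_1))^2$. The orders you record in each case match the paper's, so no gap.
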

\begin{proof}
First note that
\[
\mathbb{X}_N(t)
=
\frac{\sqrt{n}}{N}
\sum_{i=1}^N
\left(
\frac{\xi_i}{\pi_i}
-1
\right)
\mathds{1}_{\{Y_i\leq t\}}.
\]
For the sake of brevity, for $-\infty<t_1\leq t\leq t_2<\infty$,
and $i=1,2,\ldots,N$, we define
$p_1= F(t)-F(t_1)$, $p_2= F(t_2)-F(t)$,
$A_i=\mathds{1}_{\{t_1< Y_i\leq t\}}$,
and $B_i=\mathds{1}_{\{t< Y_i\leq t_2\}}$.
Furthermore, let $\alpha_i=(\xi_i-\pi_i)A_i/\pi_i$ and $\beta_i=(\xi_i-\pi_i)B_i/\pi_i$.
Then, according to the fact that
$p_1p_2\leq (F(t_2)-F(t_1))^2$, due to the monotonicity of $F$, it suffices to show
\begin{equation}
\label{eq:toprove2}
\frac{1}{N^4}
\mathbb{E}_{d,m}
\left[
n^2
\left(\sum_{i=1}^N\alpha_i\right)^2
\left(\sum_{j=1}^N\beta_j\right)^2
\right]
\leq
K
p_1p_2.
\end{equation}
The expectation on the left hand side can be decomposed as follows
\begin{equation}
\label{eq:decomposition}
\begin{split}
&\sum_{i=1}^N\sum_{k=1}^N
\mathbb{E}_{d,m}\left[n^2\alpha_i^2\beta_k^2\right]
+
\sum_{i=1}^N\sum_{j\ne i}\sum_{k=1}^N
\mathbb{E}_{d,m}\left[n^2\alpha_i\alpha_j\beta_k^2\right]\\
&+
\sum_{k=1}^N\sum_{l\ne k}\sum_{i=1}^N
\mathbb{E}_{d,m}\left[n^2\alpha_i^2\beta_k\beta_l\right]
+
\sum_{i=1}^N\sum_{j\neq i}\sum_{k=1}^N\sum_{l\neq k}
\mathbb{E}_{d,m}\left[n^2\alpha_i\alpha_j\beta_k\beta_l\right].
\end{split}
\end{equation}
Note that by symmetry, sums two and three on the right hand side can be handled similarly,
so that essentially we have to deal with three summations.
We consider them one by one.

First note that,
since $\mathds{1}_{\{t_1<Y_i\leq t\}}\mathds{1}_{\{t<Y_i\leq t_2\}}=0$, we will only have non-zero expectations when
$\{i,j\}$ and $\{k,l\}$ are disjoint.
With (C1), we find
\begin{equation}
\label{eq:bound sum1}
\begin{split}
&
\frac{1}{N^4}
\sum_{i=1}^N\sum_{k=1}^N
\mathbb{E}_{d,m}\left[n^2\alpha_i^2\beta_k^2\right]
=
\frac{1}{N^4}
\ssum_{(i,k)\in D_{2,N}}
\mathbb{E}_{d,m}\left[n^2\alpha_i^2\beta_k^2\right]\\
&=
\frac{1}{N^4}
\ssum_{(i,k)\in D_{2,N}}
\mathbb{E}_{m}
\left[
n^2
\frac{A_iB_k}{\pi_i^2\pi_k^2}
\mathbb{E}_{d}
(\xi_i-\pi_i)^2(\xi_k-\pi_k)^2
\right]\\
&\leq
\frac{1}{K_1^4}
\ssum_{(i,k)\in D_{2,N}}
\mathbb{E}_{m}
\left[
\frac{A_iB_k}{n^2}
\mathbb{E}_{d}
(\xi_i-\pi_i)^2(\xi_k-\pi_k)^2
\right]
\end{split}
\end{equation}
Straightforward computation shows that $\mathbb{E}_{d}(\xi_i-\pi_i)^2(\xi_k-\pi_k)^2$ equals
\[
(\pi_{ik}-\pi_i\pi_k)(1-2\pi_i)(1-2\pi_k)
+
\pi_i\pi_k(1-\pi_i)(1-\pi_k).
\]
Hence, with (C1)-(C2) we find that
\[
\mathbb{E}_{d}
(\xi_i-\pi_i)^2(\xi_k-\pi_k)^2
\leq
\left|
\mathbb{E}_{d}
(\xi_i-\pi_i)(\xi_k-\pi_k)
\right|
+
K_2^2
\frac{n^2}{N^2}
=
O\left(\frac{n^2}{N^2}\right),
\]
$\omega$-almost surely.
It follows that
\[
\frac{1}{N^4}
\sum_{i=1}^N\sum_{k=1}^N
\mathbb{E}_{d,m}\left[n^2\alpha_i^2\beta_k^2\right]
\leq
O\left(\frac{1}{N^2}\right)
\ssum_{(i,k)\in D_{2,N}}
\mathbb{E}_{m}
\left[
A_iB_k
\right].
\]
Since $D_{2,N}$ has $N(N-1)$ elements and $\mathbb{E}_{m}[A_iB_j]=p_1p_2$ for $(i,j)\in D_{2,N}$, it follows that
\begin{equation}
\label{eq:term1total FN}
\frac{1}{N^4}
\sum_{i=1}^N\sum_{j=1}^N
\mathbb{E}_{d,m}\left[n^2\alpha_i^2\beta_j^2\right]\leq Kp_1p_2.
\end{equation}
Consider the second (and third) summation on the right hand side of~\eqref{eq:decomposition}.
Similarly to~\eqref{eq:bound sum1}, we can then write
\[\begin{split}
&
\frac{1}{N^4}
\left|
\sum_{i=1}^N\sum_{j\ne i}\sum_{k=1}^N
\mathbb{E}_{d,m}\left[n^2\alpha_i\alpha_j\beta_k^2\right]
\right|
=
\frac{1}{N^4}
\left|
\sssum_{(i,j,k)\in D_{3,N}}
\mathbb{E}_{d,m}\left[n^2\alpha_i\alpha_j\beta_k^2\right]
\right|\\
&\leq
\frac{1}{N^4}
\sssum_{(i,j,k)\in D_{3,N}}
\left|
\mathbb{E}_{d,m}
\left[
n^2
\frac{A_iA_jB_k}{\pi_i\pi_j\pi_k^2}
(\xi_i-\pi_i)(\xi_j-\pi_j)(\xi_k-\pi_k)^2
\right]
\right|\\
&\leq
\frac{1}{N^4}
\sssum_{(i,j,k)\in D_{3,N}}
\mathbb{E}_{m}
\left[
n^2
\frac{A_iA_jB_k}{\pi_i\pi_j\pi_k^2}
\Big|
\mathbb{E}_{d}
(\xi_i-\pi_i)(\xi_j-\pi_j)(\xi_k-\pi_k)^2
\Big|
\right]\\
&\leq
\frac{1}{K_1^4}
\sssum_{(i,j,k)\in D_{3,N}}
\mathbb{E}_{m}
\left[
\frac{A_iA_jB_k}{n^2}
\Big|
\mathbb{E}_{d}
(\xi_i-\pi_i)(\xi_j-\pi_j)(\xi_k-\pi_k)^2
\Big|
\right].
\end{split}
\]
We find that $\mathbb{E}_{d}(\xi_i-\pi_i)(\xi_j-\pi_j)(\xi_k-\pi_k)^2$ equals
\[
(1-2\pi_k)
\mathbb{E}_{d}
(\xi_i-\pi_i)(\xi_j-\pi_j)(\xi_k-\pi_k)
+
\pi_k(1-\pi_k)
\mathbb{E}_{d}
(\xi_i-\pi_i)(\xi_j-\pi_j)
\]
With (C1)-(C3), this means
$|\mathbb{E}_{d}(\xi_i-\pi_i)(\xi_j-\pi_j)(\xi_k-\pi_k)^2|=O(n^2/N^3)$,
$\omega$-almost surely.
It follows that
\[
\frac{1}{N^4}
\left|
\sum_{i=1}^N\sum_{j\ne i}\sum_{k=1}^N
\mathbb{E}_{d,m}\left[n^2\alpha_i\alpha_j\beta_k^2\right]
\right|
=
O\left(\frac{1}{N^3}\right)
\sssum_{(i,j,k)\in D_{3,N}}
\mathbb{E}_{m}
\left[
A_iA_jB_k
\right].
\]
Since $D_{3,N}$ has $N(N-1)(N-2)$ elements and
$\mathbb{E}_{d,m}[A_iA_jB_k]=p_1^2p_2$,
for $(i,j,k)\in D_{3,N}$, we find
\begin{equation}
\label{eq:toprove sum2 FN}
\frac{1}{N^4}
\left|
\sum_{i=1}^N\sum_{j\ne i}\sum_{k=1}^N
\mathbb{E}_{d,m}\left[n^2\alpha_i\alpha_j\beta_k^2\right]
\right|
\leq
Kp_1p_2.
\end{equation}
The computations for the third summation in~\eqref{eq:decomposition} are completely similar.
Finally, consider the last summation in~\eqref{eq:decomposition}.
As before, this summation can be bounded by
\[
\frac{1}{K_1^4}
\sum_{(i,j,k,l)\in D_{4,N}}
\mathbb{E}_{m}
\left[
\frac{A_iA_jB_kB_l}{n^2}
\Big|
\mathbb{E}_{d}
(\xi_i-\pi_i)(\xi_j-\pi_j)(\xi_k-\pi_k)(\xi_l-\pi_l)
\Big|
\right].
\]
Since $D_{4,N}$ has $N(N-1)(N-2)(N-3)$ elements and
$\mathbb{E}_{m}[A_iA_jB_kB_l]=p_1^2p_2^2$, for $(i,j,k,l)\in D_{4,N}$,
with~(C4) we conclude that
\begin{equation}
\label{eq:toprove sum3 FN}
\frac{1}{N^4}
\left|
\sum_{i=1}^N\sum_{j\neq i}\sum_{k=1}^N\sum_{l\neq k}
\mathbb{E}_{d,m}\left[n^2\alpha_i\alpha_j\beta_k\beta_l\right]
\right|
\leq
Kp_1p_2.
\end{equation}
Together with~\eqref{eq:term1total FN}, \eqref{eq:toprove sum2 FN} and decomposition~\eqref{eq:decomposition},
this proves~\eqref{eq:toprove2}.
\end{proof}

\begin{lemma}
\label{lem:fidis HT FN}
Let $\mathbb{X}_N=\sqrt{n}(\FHT-\FN)$ and suppose that (C1)-(C2),(HT1)-(HT2) hold.
For any $k\in \{1,2,\ldots\}$, and $t_1,\ldots,t_k\in \mathbb{R}$,
$\big(\mathbb{X}_N(t_1),\ldots,\mathbb{X}_N(t_k)\big)$ converges
in distribution under~$\mathbb{P}_{d,m}$
to a $k$-variate mean zero normal random vector with covariance matrix
$\mathbf{\Sigma}_k^{\mathrm{HT}}$ given in~\eqref{eq:HT2 alternative moment}.
\end{lemma}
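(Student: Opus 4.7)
The plan is to establish convergence of finite-dimensional distributions via the Cramér-Wold device applied to a well-chosen family of bounded i.i.d.~summands, so that condition (HT1) can be invoked directly. Fix $k\geq 1$, $t_1,\ldots,t_k\in\mathbb{R}$ and $a=(a_1,\ldots,a_k)\in\mathbb{R}^k\setminus\{0\}$, and set $V_i=\sum_{\ell=1}^k a_\ell \mathds{1}_{\{Y_i\leq t_\ell\}}$. These $V_i$ are i.i.d., bounded by $\sum_\ell |a_\ell|$, and not identically zero, and a direct rewriting gives
\[
\sum_{\ell=1}^k a_\ell \mathbb{X}_N(t_\ell)
=
\sqrt{n}\left(\frac{1}{N}\sum_{i=1}^N\frac{\xi_i V_i}{\pi_i}-\frac{1}{N}\sum_{i=1}^N V_i\right).
\]
Expanding the product $V_iV_j$ in~\eqref{def:variance HT} and invoking (HT2) for each of the $k^2$ resulting index pairs, the design-based variance satisfies $nS_N^2\to\sigma^2(a):=a^\top \mathbf{\Sigma}_k^{\mathrm{HT}} a$, $\omega$-almost surely.

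Next I apply (HT1) to this particular sequence $V_i$: for $\mathbb{P}_m$-almost every $\omega$, the ratio of the displayed linear combination to $\sqrt{n}\,S_N$ converges in distribution under $\mathbb{P}_d$ to the standard normal law. Combined with the deterministic limit of $nS_N^2$ on the same full-measure set of $\omega$, Slutsky's theorem yields, $\omega$-almost surely,
\[
\sum_{\ell=1}^k a_\ell \mathbb{X}_N(t_\ell)\xrightarrow{\,d\,} N\bigl(0,\sigma^2(a)\bigr)
\qquad\text{under }\mathbb{P}_d.
\]
The case $a=0$ is trivial. The final step is to upgrade this $\omega$-almost-sure design-based convergence to convergence under the product measure $\mathbb{P}_{d,m}$. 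For any bounded continuous test function $f$, Fubini gives $\mathbb{E}_{d,m}[f(\cdot)]=\mathbb{E}_m[\mathbb{E}_d[f(\cdot)\mid\omega]]$; the inner conditional expectations are uniformly bounded by $\|f\|_\infty$ and converge pointwise on a set of full $\mathbb{P}_m$-measure to $\mathbb{E}[f(N(0,\sigma^2(a)))]$, so bounded convergence delivers the required limit. Cramér-Wold then yields the joint multivariate normal limit, and identity~\eqref{eq:HT2 alternative moment} (itself obtained from (C1)-(C2) via dominated convergence) rewrites the covariance in the form stated in the lemma.

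The main delicate point is the passage from $\omega$-almost-sure design-based convergence in distribution to joint convergence under $\mathbb{P}_{d,m}$. This step goes through cleanly only because (C1) uniformly bounds $1/\pi_i$, so that the summands in~\eqref{eq:HT2 alternative} are bounded; this makes both the Slutsky step (for the random multiplicative factor $\sqrt{nS_N^2}$) and the bounded-convergence transfer on characteristic functions trivial to verify, and also legitimises the moment identification~\eqref{eq:HT2 alternative moment} of $\mathbf{\Sigma}_k^{\mathrm{HT}}$.
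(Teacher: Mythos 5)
Your proof is correct and follows essentially the same route as the paper's own argument in the supplement: the Cram\'er--Wold device applied to $V_i=\sum_{\ell}a_\ell\mathds{1}_{\{Y_i\leq t_\ell\}}$, with (HT2) giving the $\omega$-a.s.\ limit of $nS_N^2$ and (HT1) supplying the design-based CLT, then combining via Slutsky. The only difference is that you spell out the transfer from $\omega$-almost-sure design-based convergence to convergence under $\mathbb{P}_{d,m}$ by bounded convergence on test functions, a step the paper leaves implicit (it is the content of Theorem~5.1 in~\cite{Rubin-Bleuer_Kratina_2005}, which the paper invokes explicitly only in the proof of Lemma~\ref{lem:HT CLT}).
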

\begin{proof}
The proof can be found in the supplement Ai in~\cite{Boistard_2015}.
\end{proof}

\paragraph{Proof of Theorem~\ref{th:FCLT HT FN}}
We first consider $\mathbb{X}_N=\sqrt{n}(\FHT-\FN)$
for the case that the $Y_i$'s follow a uniform distribution on $[0,1]$.
We apply Theorem~13.5 from~\cite{Billingsley_1999}.
Lemma~\ref{lem:fidis HT FN} provides the limiting distribution of the finite dimensional projections
$(\mathbb{X}_N(t_1),\ldots,\mathbb{X}_N(t_k))$, which is the same as that of the vector
$(\mathbb{G}^{\mathrm{HT}}(t_1),\ldots,\mathbb{G}^{\mathrm{HT}}(t_k))$,
where $\mathbb{G}^{\mathrm{HT}}$ is a mean zero Gaussian process with covariance function
\[
\mathbb{E}_{m}\mathbb{G}^{\mathrm{HT}}(s)\mathbb{G}^{\mathrm{HT}}(t)
=
\lim_{N\to\infty}
\frac{1}{N^2}
\sum_{i=1}^N\sum_{j=1}^N
\mathbb{E}_{m}
\left[
n
\frac{\pi_{ij}-\pi_i\pi_j}{\pi_i\pi_j}
\mathds{1}_{\{Y_i\leq s\}}\mathds{1}_{\{Y_j\leq t\}}
\right],
\]
for all $s,t\in \mathbb{R}$.
Tightness condition (13.14) in~\cite{Billingsley_1999} is provided by Lemma~\ref{lem:tightness HT FN}.
Since $\mathbb{G}^{\mathrm{HT}}$ is continuous at 1,
the theorem now follows from Theorem~13.5 in~\cite{Billingsley_1999} for the case that the $Y_i$'s are uniformly distributed on~$[0,1]$.

To extend this to a functional CLT with i.i.d.~random variables $Y_1,Y_2,\ldots$ with a general c.d.f.~$F$,
we can follow the argument in the proof of Theorem~14.3 from~\cite{Billingsley_1999}.
First define the generalized inverse of~$F$:
\[
\varphi(s)
=
\inf\{t: s\leq F(t)\},
\]
that satisfies $s\leq F(t)$ if and only if $\varphi(s)\leq t$.
This means that if $U_1,U_2,\ldots$ are i.i.d.~uniformly distributed on $[0,1]$, $\varphi(U_i)$ has the same distribution as~$Y_i$, so that
$\mathds{1}_{\{Y_i\leq t\}}
\stackrel{d}{=}
\mathds{1}_{\{\varphi(U_i) \leq t\}}
=
\mathds{1}_{\{U_i\leq F(t)\}}$.
It follows that
\[
\mathbb{X}_N(t)
=
\sqrt{n}
\left\{
\frac{1}{N}
\sum_{i=1}^N
\frac{\xi_i\mathds{1}_{\{Y_i\leq t\}}}{\pi_i}
-
\frac{1}{N}
\sum_{i=1}^N
\mathds{1}_{\{Y_i\leq t\}}
\right\}
\stackrel{d}{=}
Z_N(F(t)),
\quad
t\in \mathbb{R},
\]
where
\begin{equation}
\label{eq:HT EP uniform}
Z_N(t)
=
\frac{\sqrt{n}}{N}
\sum_{i=1}^N
\left(\frac{\xi_i}{\pi_i}-1\right)\mathds{1}_{\{U_i\leq t\}},
\quad
t\in[0,1],
\end{equation}
Hence, the general HT empirical process $\mathbb{X}_N$ is the image of the HT uniform empirical process $Z_N$ under the mapping
$\psi:D[0,1]\mapsto D(\mathbb{R})$ given by
$\left[\psi x\right](t)=x(F(t))$.
Note that, if $x_N\to x$ in $D[0,1]$ in the Skorohod topology and $x$ has continuous sample paths,
then the convergence is uniform.
But then also $\psi x_N$ converges to $\psi x$ uniformly in $D(\mathbb{R})$.
This implies that $\psi x_N$ converges to $\psi x$ in the Skorohod topology.
We have established that $Z_N\Rightarrow Z$ weakly in $D[0,1]$ in the Skorohod topology, where~$Z$ has continuous sample paths.
Therefore, according to the continuous mapping theorem, e.g., Theorem~2.7 in~\cite{Billingsley_1999}, it follows that
$\psi(Z_N)\Rightarrow \psi(Z)$ weakly.
This proves the theorem for $Y_i$'s with a general c.d.f.~$F$.
\hfill\tqed

\bigskip

The proof of Proposition~\ref{prop:FCLT HT FN} is similar to that of Theorem~\ref{th:FCLT HT FN} and can be found
in the supplement A in~\cite{Boistard_2015}.

\bigskip

To establish tightness for the process $\sqrt{n}(\FHT-F)$ we use the following decomposition
\begin{equation}
\label{eq:tightness decomposition}
\sqrt{n}(\FHT-F)
=
\sqrt{n}(\FHT-\FN)
+
\frac{\sqrt{n}}{\sqrt{N}}\cdot\sqrt{N}(\FN-F).
\end{equation}
The first process on the right hand side converges weakly to Gaussian process, according to Theorem~\ref{th:FCLT HT FN}.
The process $\sqrt{N}(\FN-F)$ also converges weakly to a Gaussian process, due to the classical Donsker theorem.
In particular both processes on the right hand side are tight in $D(\mathbb{R})$ with the Skorohod metric.
In general the sum of two tight processes in $D(\mathbb{R})$ is not necessarily tight.
However, this will be the case if both processes converge weakly to continuous processes
(see Lemma~\ref{lem:tightness sum} in~\cite{Boistard_2015}).

\begin{lemma}
\label{lem:HT CLT}
Let $V_1,V_2,\ldots$ be a sequence of bounded i.i.d.~random variables on $(\Omega,\mathfrak{F},\mathbb{P}_{m})$
with mean $\mu_V$ and variance $\sigma_V^2$, and let $S_N^2$ be defined by~\eqref{def:variance HT}.
Suppose (HT1) and (HT3) hold and $nS_N^2\to \sigma_{\mathrm{HT}}^2>0$ in $\mathbb{P}_{m}$-probability.
Then,
\begin{equation}
\label{eq:HT estimator}
\sqrt{n}
\left(
\frac{1}{N}
\sum_{i=1}^N
\frac{\xi_iV_i}{\pi_i}
-
\mu_V
\right),
\end{equation}
converges in distribution under $\mathbb{P}_{d,m}$ to a mean zero normal random variable with variance
$\sigma_{\mathrm{HT}}^2+\lambda\sigma^2_V$.
\end{lemma}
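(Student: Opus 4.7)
The plan is to follow the decomposition~\eqref{eq:decomposition CLT} exhibited in the text, treating the two resulting terms separately on the product space and then assembling them via characteristic functions on $(\mathcal{S}_N\times\Omega,\mathfrak{A}_N\times\mathfrak{F},\mathbb{P}_{d,m})$. Write $T_N=A_N+B_N$, where
\[
A_N=\sqrt{n}\left(\frac{1}{N}\sum_{i=1}^N\frac{\xi_iV_i}{\pi_i}-\frac{1}{N}\sum_{i=1}^N V_i\right),
\qquad
B_N=\sqrt{\frac{n}{N}}\cdot\sqrt{N}\left(\frac{1}{N}\sum_{i=1}^N V_i-\mu_V\right).
\]
The term $B_N$ is $\mathfrak{F}$-measurable. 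Since the $V_i$ are i.i.d.~and bounded, the classical Lindeberg--L\'evy CLT gives $\sqrt{N}(\bar V_N-\mu_V)\Rightarrow N(0,\sigma_V^2)$ under $\mathbb{P}_m$, and assumption~(HT3) together with Slutsky's lemma then yields $B_N\Rightarrow N(0,\lambda\sigma_V^2)$ under $\mathbb{P}_m$.

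For $A_N$, I would factor $A_N=\sqrt{nS_N^2}\cdot Z_N$, where $Z_N=S_N^{-1}(N^{-1}\sum_i\xi_iV_i/\pi_i-N^{-1}\sum_iV_i)$. By~(HT1), for $\mathbb{P}_m$-a.e.~$\omega$, $Z_N\mid\omega\Rightarrow N(0,1)$ in distribution under $\mathbb{P}_d$. Combined with $nS_N^2\to\sigma_{\mathrm{HT}}^2$ in $\mathbb{P}_m$-probability, a standard subsequence-extraction argument produces, from every subsequence, a further subsequence along which $nS_N^2(\omega)\to\sigma_{\mathrm{HT}}^2$ for $\mathbb{P}_m$-a.e.~$\omega$. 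Along such a subsequence, Slutsky's lemma applied conditionally on $\omega$ (with $\sqrt{nS_N^2(\omega)}$ deterministic under $\mathbb{P}_d$) gives $A_N\mid\omega\Rightarrow N(0,\sigma_{\mathrm{HT}}^2)$ under $\mathbb{P}_d$ for $\mathbb{P}_m$-a.e.~$\omega$. Consequently, setting $\varphi_N(\omega):=\mathbb{E}_d[e^{itA_N}\mid\omega]$ for a fixed $t\in\mathbb{R}$, we have $\varphi_N\to\varphi_\infty:=e^{-t^2\sigma_{\mathrm{HT}}^2/2}$ in $\mathbb{P}_m$-probability (the limit being deterministic, the subsequence principle upgrades pointwise convergence along subsequences to convergence in probability of the full sequence).

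To combine the two limits, exploit the $\mathfrak{F}$-measurability of $B_N$:
\[
\mathbb{E}_{d,m}\bigl[e^{it T_N}\bigr]
=\mathbb{E}_m\bigl[e^{itB_N}\varphi_N\bigr]
=\varphi_\infty\,\mathbb{E}_m\bigl[e^{itB_N}\bigr]+\mathbb{E}_m\bigl[e^{itB_N}(\varphi_N-\varphi_\infty)\bigr].
\]
The second summand is bounded in modulus by $\mathbb{E}_m|\varphi_N-\varphi_\infty|$, which tends to $0$ by dominated convergence since $|\varphi_N|\leq 1$ and $\varphi_N\to\varphi_\infty$ in probability to a constant. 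The first summand tends to $\varphi_\infty\cdot e^{-t^2\lambda\sigma_V^2/2}$ by the L\'evy continuity theorem applied to the weak convergence of $B_N$. Hence $\mathbb{E}_{d,m}[e^{itT_N}]\to e^{-t^2(\sigma_{\mathrm{HT}}^2+\lambda\sigma_V^2)/2}$ for every $t\in\mathbb{R}$, and L\'evy's theorem gives the announced convergence under $\mathbb{P}_{d,m}$.

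The main obstacle is assembling the conditional design-based limit for $A_N$ (whose quadratic scale $nS_N^2$ is itself random through its dependence on $\omega$ via the $V_i$ and the $\pi_i$) with the super-population limit for $B_N$ on the product space. The key device is that the conditional limit of $A_N$ is deterministic---this is exactly what the assumption $nS_N^2\to\sigma_{\mathrm{HT}}^2$ to a constant buys us---so $\varphi_N$ converges in $\mathbb{P}_m$-probability to a constant and decouples cleanly from $e^{itB_N}$ in the expectation. This is essentially the use of Theorem~5.1(iii) of~\cite{Rubin-Bleuer_Kratina_2005} advertised in the text.
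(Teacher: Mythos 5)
Your proposal is correct, and it rests on the same decomposition~\eqref{eq:decomposition CLT} and the same division of labour between (HT1) for the design-based term and the classical CLT plus (HT3) for the model-based term as the paper's proof. The difference lies in how the two limits are assembled and how the random normalization is handled. The paper first divides through by $S_N$, so that the design term converges $\omega$-a.s.\ under $\mathbb{P}_d$ to the fixed law $N(0,1)$ exactly as delivered by (HT1); it then invokes Theorem~5.1(iii) of~\cite{Rubin-Bleuer_Kratina_2005} as a black box to conclude joint convergence on the product space, and finally multiplies back by $\sqrt{n}S_N\to\sigma_{\mathrm{HT}}$ via Slutsky. You instead keep the $\sqrt{n}$ scale throughout, absorb the random factor $\sqrt{nS_N^2}$ into the design term by a conditional Slutsky argument (correctly using the subsequence principle to pass from convergence in $\mathbb{P}_m$-probability of $nS_N^2$ to a.s.\ convergence along subsequences), and then prove the combination step directly by splitting $\mathbb{E}_{d,m}[e^{itT_N}]=\mathbb{E}_m[e^{itB_N}\varphi_N]$ and exploiting that $\varphi_N$ converges in $\mathbb{P}_m$-probability to a constant. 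In effect you re-derive the special case of Theorem~5.1(iii) of~\cite{Rubin-Bleuer_Kratina_2005} that is needed, which makes your argument self-contained at the cost of the extra subsequence bookkeeping; the paper's route is shorter because the deterministic-limit requirement of the cited theorem is met immediately after the $S_N$-normalization. Both arguments are valid and yield the same limiting variance $\sigma_{\mathrm{HT}}^2+\lambda\sigma_V^2$.
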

\begin{proof}
The proof can be found in the supplement A in~\cite{Boistard_2015}.
\end{proof}

%Note that, in view of the expression for $\sigma_{\mathrm{HT}}^2$ obtained in Lemma~\ref{lem:variance HT},
%for simple random sampling without replacement, the condition~$\sigma^2_{\mathrm{HT}}>0$ implies that~$\lambda$ must differ from 1.

\begin{lemma}
\label{lem:fidis HT F}
Let $\mathbb{X}_N^F=\sqrt{n}(\FHT-F)$ and suppose that (C1)-(C2),(HT1)-(HT4) hold.
Then for any $k\in \{1,2,\ldots\}$, and $t_1,t_2,\ldots,t_k\in \mathbb{R}$,
the sequence
$\big(\mathbb{X}_N^F(t_1),\ldots,\mathbb{X}_N^F(t_k)\big)$ converges
in distribution under~$\mathbb{P}_{d,m}$
to a $k$-variate mean zero normal random vector with covariance matrix
$\SigmaHTF
=
\mathbf{\Sigma}_k^{\mathrm{HT}}+\lambda \mathbf{\Sigma}_F$,
where $\mathbf{\Sigma}_k^{\mathrm{HT}}$ is given in~\eqref{eq:HT2 alternative moment}
and
$\mathbf{\Sigma}_F$ is the $k\times k$ matrix with $(q,r)$-entry
$F(t_q\wedge t_r)-F(t_q)F(t_r)$, for $q,r=1,2,\ldots,k$.
\end{lemma}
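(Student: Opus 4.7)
The plan is to reduce the multidimensional convergence to a one-dimensional statement via the Cram\'er--Wold device and then apply Lemma~\ref{lem:HT CLT} to the resulting univariate sum. Fix $\mathbf{a}=(a_1,\ldots,a_k)\in \mathbb{R}^k\setminus\{0\}$ and consider the linear combination $\sum_{q=1}^k a_q\mathbb{X}_N^F(t_q)$. Defining $V_i=\sum_{q=1}^k a_q \mathds{1}_{\{Y_i\leq t_q\}}$, the $V_i$'s are i.i.d.\ bounded by $\sum_q|a_q|$, with mean $\mu_V=\sum_q a_q F(t_q)$, and
\[
\sum_{q=1}^k a_q\mathbb{X}_N^F(t_q)
=
\sqrt{n}\left(\frac{1}{N}\sum_{i=1}^N\frac{\xi_iV_i}{\pi_i}-\mu_V\right).
\]
So the problem reduces exactly to the setting of Lemma~\ref{lem:HT CLT}.

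Next I would verify that the hypotheses of Lemma~\ref{lem:HT CLT} are satisfied for this $V_i$. Condition (HT1) gives the conditional CLT required there, for the specific bounded sequence $V_1,V_2,\ldots$. Condition (HT3) is assumed. What remains is to show that $nS_N^2\to \sigma_{\mathrm{HT}}^2>0$ in $\mathbb{P}_m$-probability (in fact $\omega$-almost surely). By bilinearity,
\[
nS_N^2
=
\mathbf{a}^t\Biggl(\frac{n}{N^2}\sum_{i=1}^N\sum_{j=1}^N\frac{\pi_{ij}-\pi_i\pi_j}{\pi_i\pi_j}\mathbf{Y}_{ik}\mathbf{Y}_{jk}^t\Biggr)\mathbf{a},
\]
with $\mathbf{Y}_{ik}=(\mathds{1}_{\{Y_i\leq t_1\}},\ldots,\mathds{1}_{\{Y_i\leq t_k\}})$. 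By (HT2), the matrix in the middle converges $\omega$-a.s.\ to $\mathbf{\Sigma}_k^{\mathrm{HT}}$, so $nS_N^2\to\mathbf{a}^t\mathbf{\Sigma}_k^{\mathrm{HT}}\mathbf{a}$; by (HT4) this limit is strictly positive, so the hypothesis $\sigma_{\mathrm{HT}}^2>0$ of Lemma~\ref{lem:HT CLT} holds.

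Lemma~\ref{lem:HT CLT} then yields that $\sum_q a_q \mathbb{X}_N^F(t_q)$ converges in distribution under $\mathbb{P}_{d,m}$ to a mean-zero normal with variance $\sigma_{\mathrm{HT}}^2+\lambda\sigma_V^2$. A direct calculation with $V_i=\sum_q a_q\mathds{1}_{\{Y_i\leq t_q\}}$ gives
\[
\sigma_V^2
=
\mathbb{E}_m[V_i^2]-\mu_V^2
=
\sum_{q=1}^k\sum_{r=1}^k a_q a_r\bigl(F(t_q\wedge t_r)-F(t_q)F(t_r)\bigr)
=
\mathbf{a}^t\mathbf{\Sigma}_F\mathbf{a},
\]
so the limit variance equals $\mathbf{a}^t(\mathbf{\Sigma}_k^{\mathrm{HT}}+\lambda\mathbf{\Sigma}_F)\mathbf{a}=\mathbf{a}^t\SigmaHTF\mathbf{a}$. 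Since this holds for every $\mathbf{a}$, the Cram\'er--Wold device yields the claimed multivariate normal limit.

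I do not expect a substantial technical obstacle beyond keeping track of the two sources of randomness; the key points are (i) recognizing that (HT2) combined with (HT4) delivers both the convergence and the strict positivity needed to invoke Lemma~\ref{lem:HT CLT}, and (ii) the algebraic identification of the covariance of the bounded i.i.d.\ random variables $V_i$ with $\mathbf{a}^t\mathbf{\Sigma}_F\mathbf{a}$, which produces the $\lambda\mathbf{\Sigma}_F$ contribution from the super-population fluctuation term in decomposition~\eqref{eq:decomposition CLT}.
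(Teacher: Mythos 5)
Your proposal is correct and follows essentially the same route as the paper's own proof: Cram\'er--Wold reduction with $V_{ik}=\sum_q a_q\mathds{1}_{\{Y_i\leq t_q\}}$, identification of $nS_N^2$ with the quadratic form $\mathbf{a}^t\mathbf{\Sigma}_k^{\mathrm{HT}}\mathbf{a}$ via (HT2), strict positivity from (HT4) to license Lemma~\ref{lem:HT CLT}, and the computation $\sigma_V^2=\mathbf{a}^t\mathbf{\Sigma}_F\mathbf{a}$ producing the $\lambda\mathbf{\Sigma}_F$ term. No substantive differences.
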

\begin{proof}
The proof can be found in the supplement A in~\cite{Boistard_2015}.
\end{proof}

%\begin{proof}
%The proof is similar to the proof of Lemma \ref{lem:fidis HT FN}.
%The details can be found in the supplement A in~\cite{Boistard_2015}.
%\end{proof}

\paragraph{Proof of Theorem~\ref{th:FCLT HT F}}
The proof is completely similar to that of Theorem~\ref{th:FCLT HT FN}.
We first consider the process $\mathbb{X}_N^F=\sqrt{n}(\FHT-F)$
for the case that the $Y_i$'s follow a uniform distribution with $F(t)=t$.
Decompose~$\mathbb{X}_N^F$ as in~\eqref{eq:tightness decomposition}.
By Theorem~\ref{th:FCLT HT FN}, the first process on the right hand side of~\eqref{eq:tightness decomposition}
converges weakly to a process in $C[0,1]$.
Due to the classical Donsker theorem and~(HT3), the second process on the right hand side of~\eqref{eq:tightness decomposition}
also converges weakly to a process in $C[0,1]$.
Tightness of~$\mathbb{X}_N^F$ then follows from Lemma~\ref{lem:tightness sum} in~\cite{Boistard_2015}.
Convergence of the finite dimensional distributions is provided by Lemma~\ref{lem:fidis HT F}.
The theorem now follows from Theorem~13.5 in~\cite{Billingsley_1999} for the case that the~$Y_i$'s are uniformly distributed on~$[0,1]$.
Next, this is extended to~$Y_i$'s with a general c.d.f.~$F$ in the same way as in the proof of Theorem~\ref{th:FCLT HT FN}.
\hfill\tqed

\bigskip

To establish convergence in distribution of the finite dimensional distributions of $\sqrt{n}(\FHT-F)$
under the conditions of Proposition~\ref{prop:FCLT HT F},
as in the proof of Lemma~\ref{lem:fidis HT F}, we will use the Cram\'er-Wold device.
To ensure that~$nS_N^2$ still has a strictly positive limit without imposing (HT4),
we will need the following lemma.
Its proof can be found in the supplement A in~\cite{Boistard_2015}.
\begin{lemma}
\label{lem:positive definite}
Let $F$ be the c.d.f.~of the i.i.d.~$Y_1,\ldots,Y_N$.
For any $k$-tuple $(t_1,\ldots, t_k)\in\mathbb{R}^k$, suppose that the values $F(t_1),\ldots, F(t_k)$ are all distinct and such that $0<F(t_i)<1$.
Let $a,b\in \mathbb{R}$, such that $a\geq b$.
If $a>0$, then the $k\times k$ matrix $\mathbf{M}$ with $(i,j)$-th element $M_{ij}=aF(t_i\wedge t_j)-bF(t_i)F(t_j)$ is positive definite.
\end{lemma}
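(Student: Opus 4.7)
The approach is to reduce positivity of $\mathbf{x}^T \mathbf{M} \mathbf{x}$ to a Cauchy--Schwarz inequality on $[0,1]$ via an integral representation of the min-kernel. First I would reduce to the ordered case: since permuting the indices conjugates $\mathbf{M}$ by a permutation matrix, which preserves positive definiteness, I may assume $0 < p_1 < p_2 < \cdots < p_k < 1$, where $p_i := F(t_i)$. Using $F(t_i \wedge t_j) = \min(p_i, p_j)$ (by monotonicity of $F$), for arbitrary $\mathbf{x} = (x_1, \ldots, x_k) \in \mathbb{R}^k$ I would write
\[
\mathbf{x}^T \mathbf{M} \mathbf{x} = a \sum_{i,j=1}^k \min(p_i, p_j) x_i x_j - b \Big(\sum_{i=1}^k p_i x_i\Big)^2.
\]

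Next, using the identities $\min(p,q) = \int_0^1 \mathds{1}_{\{s \le p\}} \mathds{1}_{\{s \le q\}}\,ds$ and $p = \int_0^1 \mathds{1}_{\{s \le p\}}\,ds$ for $p,q \in [0,1]$, and introducing the step function $f(s) = \sum_{i=1}^k x_i \mathds{1}_{\{s \le p_i\}}$, the quadratic form rewrites as
\[
\mathbf{x}^T \mathbf{M} \mathbf{x} = a \int_0^1 f(s)^2\,ds - b \Big(\int_0^1 f(s)\,ds\Big)^2.
\]
By Cauchy--Schwarz on $[0,1]$, which has total mass $1$, we have $(\int_0^1 f)^2 \le \int_0^1 f^2$. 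If $b \le 0$, the form equals $a \int f^2 + (-b)(\int f)^2 \ge 0$; if $0 < b \le a$, I would rewrite it as $(a-b)\int f^2 + b\bigl[\int f^2 - (\int f)^2\bigr] \ge 0$. Either way $\mathbf{M}$ is positive semidefinite.

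It remains to upgrade to strict positivity, which is where I expect the main (small) obstacle to lie. Because $0 < p_1 < \cdots < p_k < 1$, the intervals $(0,p_1], (p_1, p_2], \ldots, (p_{k-1},p_k], (p_k, 1]$ all have positive Lebesgue measure, and $f$ takes the constant values $\sum_{i=1}^k x_i,\ \sum_{i=2}^k x_i,\ \ldots,\ x_k,\ 0$ on them, respectively. If $b \le 0$ or $0 < b < a$, the coefficient of $\int f^2$ in the representations above is strictly positive, so it suffices to show that $\int f^2 = 0$ forces $\mathbf{x} = 0$; vanishing of all the values above yields $x_k = 0$, then $x_{k-1} = 0$, and so on by successive subtraction. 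The delicate boundary case is $a = b > 0$: here the form reduces to $b\cdot\mathrm{Var}_{[0,1]}(f)$, which vanishes only if $f$ is a.e.\ constant on $[0,1]$; but $f \equiv 0$ on $(p_k,1]$, so this constant must be $0$, and the same successive-subtraction argument again forces $\mathbf{x} = 0$. In all cases $\mathbf{x} \ne \mathbf{0}$ implies $\mathbf{x}^T \mathbf{M} \mathbf{x} > 0$, establishing positive definiteness.
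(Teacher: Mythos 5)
Your proof is correct, but it takes a genuinely different route from the paper's. The paper works with the determinant: after the same reduction to $0<F(t_1)<\cdots<F(t_k)<1$, it performs explicit row operations (scaling row $i$ by $F(t_1)/F(t_i)$, then subtracting consecutive rows) to triangularize $\mathbf{M}$ and obtain the closed form $\det(\mathbf{M})=a^{k-1}F(t_1)\prod_{i=2}^{k}\bigl(F(t_i)-F(t_{i-1})\bigr)\bigl(a-bF(t_k)\bigr)>0$; positive definiteness then follows because every leading principal submatrix has the same structure, so all leading principal minors are positive (Sylvester's criterion — a step the paper leaves implicit). You instead attack the quadratic form directly: the identity $\min(p,q)=\int_0^1\mathds{1}_{\{s\le p\}}\mathds{1}_{\{s\le q\}}\,ds$ turns $\mathbf{x}^T\mathbf{M}\mathbf{x}$ into $a\int_0^1 f^2-b\bigl(\int_0^1 f\bigr)^2$ for the step function $f=\sum_i x_i\mathds{1}_{\{s\le F(t_i)\}}$, and Cauchy--Schwarz plus a careful treatment of the equality cases (including the boundary case $a=b$, where the hypothesis $F(t_k)<1$ is exactly what forces the a.e.\ constant to be zero) yields strict positivity for $\mathbf{x}\ne 0$. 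Your argument is more conceptual, bypasses Sylvester's criterion entirely, and would generalize beyond finitely many points; the paper's computation, on the other hand, delivers the explicit determinant formula, which is reused later in the proof of Lemma~\ref{lem:fidis HT F deterministic} to handle the case $F(t_k)=1$ with $\mu_{\pi1}+\mu_{\pi2}>0$, so your approach would leave that downstream step to be rederived.
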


\begin{lemma}
\label{lem:fidis HT F deterministic}
Let $\mathbb{X}_N^F=\sqrt{n}(\FHT-F)$ and suppose that
$n$ and $\pi_i,\pi_{ij}$, for $i,j=1,2,\ldots,N$, are deterministic.
Suppose that (C1)-(C2), (HT1) and~(HT3) hold, as well as conditions (i)-(ii) of
Proposition~\ref{prop:FCLT HT F}.
Then, for any $k\in \{1,2,\ldots\}$, and $t_1,\ldots,t_k\in \mathbb{R}$,
$\big(\mathbb{X}_N^F(t_1),\ldots,\mathbb{X}_N^F(t_k)\big)$ converges
in distribution under~$\mathbb{P}_{d,m}$
to a $k$-variate mean zero normal random vector with covariance matrix
$\SigmaHTF$, with $(q,r)$-entry
$(\mu_{\pi1}+\lambda)F(t_q\wedge t_r)+(\mu_{\pi2}-\lambda)F(t_q)F(t_r)$,
for $q,r,=1,2,\ldots,k$.
\end{lemma}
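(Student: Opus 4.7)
The plan is to follow the same template as for Lemma~\ref{lem:fidis HT F} and reduce multivariate convergence to one-dimensional convergence via the Cram\'er--Wold device, with Lemma~\ref{lem:HT CLT} playing the role of Lemma~\ref{lem:fidis HT F}'s workhorse.

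Fix $\mathbf{a}=(a_1,\ldots,a_k)\in\mathbb{R}^k$ and set $V_i=\sum_{q=1}^k a_q \mathds{1}_{\{Y_i\leq t_q\}}$. The $V_i$ are i.i.d., bounded by $\sum_q|a_q|$, and have mean $\mu_V=\sum_q a_q F(t_q)$ and variance $\sigma_V^2=\sum_{q,r}a_qa_r\bigl[F(t_q\wedge t_r)-F(t_q)F(t_r)\bigr]$. A direct computation shows
\[
\sum_{q=1}^k a_q \mathbb{X}_N^F(t_q)=\sqrt{n}\left(\frac{1}{N}\sum_{i=1}^N\frac{\xi_iV_i}{\pi_i}-\mu_V\right).
\]
The goal is therefore to apply Lemma~\ref{lem:HT CLT} to this sum, identify the limiting variance, and read off the claimed covariance matrix $\SigmaHTF$ in the limit.

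The three hypotheses of Lemma~\ref{lem:HT CLT} need to be checked. Conditions (HT1) and (HT3) are part of the assumptions. The remaining hypothesis is convergence in $\mathbb{P}_m$-probability of $nS_N^2$ to a strictly positive limit. Under (C1)--(C2) together with (i)--(ii), Lemma~\ref{lem:variance HT} from~\cite{Boistard_2015} delivers
\[
nS_N^2\longrightarrow \sigma_{\mathrm{HT}}^2:=\mu_{\pi1}\,\mathbb{E}_m[V_1^2]+\mu_{\pi2}\,\mu_V^2
=\sum_{q,r}a_qa_r\bigl[\mu_{\pi1}F(t_q\wedge t_r)+\mu_{\pi2}F(t_q)F(t_r)\bigr].
\]
Positivity of $\sigma_{\mathrm{HT}}^2$ for $\mathbf{a}\neq 0$ is the step where Lemma~\ref{lem:positive definite} enters: after a harmless reduction that discards indices $q$ with $F(t_q)\in\{0,1\}$ (these contribute trivially) and merges indices with equal $F(t_q)$, we may assume the $F(t_q)$ are distinct and in $(0,1)$. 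Applying Lemma~\ref{lem:positive definite} with $a=\mu_{\pi1}$ and $b=-\mu_{\pi2}$ requires $\mu_{\pi1}>0$ (given by~(i)) and $\mu_{\pi1}+\mu_{\pi2}\geq 0$; the latter follows because $\mu_{\pi1}+\mu_{\pi2}$ is precisely $\lim nS_N^2$ in the special case $V_i\equiv 1$, a limit of non-negative design-based variances.

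Once Lemma~\ref{lem:HT CLT} applies, it yields a mean zero normal limit with variance
\[
\sigma_{\mathrm{HT}}^2+\lambda\sigma_V^2
=\sum_{q,r}a_qa_r\bigl[(\mu_{\pi1}+\lambda)F(t_q\wedge t_r)+(\mu_{\pi2}-\lambda)F(t_q)F(t_r)\bigr]
=\mathbf{a}^T\SigmaHTF\mathbf{a},
\]
which is exactly the claimed quadratic form, and Cram\'er--Wold completes the proof. The main obstacle I anticipate is the verification of $\sigma_{\mathrm{HT}}^2>0$: all the other pieces are bookkeeping or immediate from the cited lemmas, whereas the positivity hinges on combining the algebraic inequality $\mu_{\pi1}+\mu_{\pi2}\geq 0$ (obtained from nonnegativity of a design variance in the constant case) with the structural result of Lemma~\ref{lem:positive definite}, after a preliminary reduction to the non-degenerate case $F(t_q)\in(0,1)$ with distinct values.
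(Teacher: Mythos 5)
Your overall architecture matches the paper's: Cram\'er--Wold, the representation of the linear combination as a centered Horvitz--Thompson estimator of $V_i=\sum_q a_q\mathds{1}_{\{Y_i\le t_q\}}$, Lemma~\ref{lem:variance HT} for the limit of $nS_N^2$, Lemma~\ref{lem:positive definite} with $a=\mu_{\pi1}$ and $b=-\mu_{\pi2}$ (together with the observation that $\mu_{\pi1}+\mu_{\pi2}\ge 0$ as a limit of design-based variances), and Lemma~\ref{lem:HT CLT} to conclude. The gap is in your ``harmless reduction'': discarding an index $q$ with $F(t_q)=1$ is not harmless. For such an index $\mathds{1}_{\{Y_i\le t_q\}}=1$ almost surely, so
$a_q\mathbb{X}_N^F(t_q)=a_q\sqrt{n}\bigl(N^{-1}\sum_{i}\xi_i/\pi_i-1\bigr)=a_q\sqrt{n}(\widehat N/N-1)$,
which under (HT1) and conditions (i)--(ii) converges in distribution to $N\bigl(0,a_q^2(\mu_{\pi1}+\mu_{\pi2})\bigr)$ --- a nondegenerate limit whenever $\mu_{\pi1}+\mu_{\pi2}>0$ (e.g.\ Poisson sampling, where $\mu_{\pi2}=0<\mu_{\pi1}$). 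The target covariance matrix indeed assigns this index the diagonal entry $(\mu_{\pi1}+\lambda)+(\mu_{\pi2}-\lambda)=\mu_{\pi1}+\mu_{\pi2}$, which is nonzero in general. Dropping the index therefore changes the limiting variance and already gives the wrong answer for $k=1$ with $F(t_1)=1$. (Discarding indices with $F(t_q)=0$ and merging ties are fine, and are handled the same way in the paper.)

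The paper spends most of its proof on precisely this boundary case. When $F(t_k)=1$ one must keep the index and verify $\mathbf{a}^t\mathbf{\Sigma}_k\mathbf{a}>0$ directly; Lemma~\ref{lem:positive definite} as stated does not apply (it assumes $0<F(t_i)<1$), but the determinant formula from its proof gives
$\det\mathbf{\Sigma}_k=\mu_{\pi1}^{k-1}F(t_1)\prod_{i=2}^{k}\bigl(F(t_i)-F(t_{i-1})\bigr)(\mu_{\pi1}+\mu_{\pi2})$,
so positive definiteness holds if and only if $\mu_{\pi1}+\mu_{\pi2}>0$. In the remaining subcase $\mu_{\pi1}+\mu_{\pi2}=0$ the matrix is singular in the direction $(0,\ldots,0,1)$, Lemma~\ref{lem:HT CLT} is not applicable (it requires $\sigma_{\mathrm{HT}}^2>0$), and one must argue separately that $\sqrt{n}(\widehat N/N-1)\to0$ in probability to match the degenerate Gaussian limit; this is how the paper closes the argument. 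Your proof needs this case analysis; the rest of the proposal (the case of distinct values in $(0,1)$, the positivity argument there, and the identification of the limiting variance $\sigma_{\mathrm{HT}}^2+\lambda\sigma_V^2$) is correct and essentially identical to the paper's.
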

\begin{proof}
The proof can be found in the supplement A in~\cite{Boistard_2015}.
\end{proof}

The proof of Proposition~\ref{prop:FCLT HT F} is similar to that of Theorem~\ref{th:FCLT HT F} and
can be found in the supplement A in~\cite{Boistard_2015}.

%\subsection{Proofs for Section~\ref{sec:Hajek}}
\paragraph{Proof of Theorem~\ref{th:FCLT Lumley}}
For part (i), note that with $S_N^2$ defined in~\eqref{def:variance HT} with $V_i=1$,
from (HT1) together with condition~\eqref{eq:cond CLT Nhat}, it follows that
\[
\sqrt{n}S_N\times
\frac{1}{S_N}
\left(
\frac{1}{N}
\sum_{i=1}^N
\frac{\xi_i}{\pi_i}
-
1
\right)
\to
N(0,\sigma_{\pi}^2),
\quad
\omega-\text{a.s.},
\]
in distribution under $\mathbb{P}_d$.
This implies
\begin{equation}
\label{eq:CLT Nhat}
\sqrt{n}
\left(
\frac{\widehat{N}}{N}-1
\right)
=
\sqrt{n}
\left(
\frac{1}{N}
\sum_{i=1}^N
\frac{\xi_i}{\pi_i}
-
1
\right)
\to
N(0,\sigma_{\pi}^2),
\end{equation}
in distribution under $\mathbb{P}_{d,m}$.
In particular, since $n\to\infty$, this proves part~(i).

The proof of part(ii) is along the same lines as the proof of Theorems~\ref{th:FCLT HT FN} and~\ref{th:FCLT HT F}.
First consider the case, where the $Y_i$'s are uniform, with $F(t)=t$ on $[0,1]$.
Then, with $\FHT$ defined in~\eqref{eq:HT cdf} and $\mathbb{X}_N^F=\sqrt{n}(\FHT-F)$, we can write
$\GL(t)=\mathbb{X}_N^F(t)-(\mathbb{X}_N^F(t)-\GL(t))$.
According to Theorem~\ref{th:FCLT HT F}, the process~$\mathbb{X}_N^F$ converges weakly to a continuous process.
As a consequence of~\eqref{eq:CLT Nhat},
the process
\[
\mathbb{X}_N^F(t)-\GL(t)
=
t
\sqrt{n}
\left(
\frac{1}{N}
\sum_{i=1}^N
\frac{\xi_i}{\pi_i}
-
1
\right),
\]
also converges weakly to a continuous process.
Hence, similar to the argument in the proof of Theorem~\ref{th:FCLT HT F},
we conclude that the process $\GL$ is tight.
Next, we establish weak convergence of the finite dimensional projections.
{Details can be found in the supplement A in~\cite{Boistard_2015}.}
\hfill\tqed

\paragraph{Proof of Theorem~\ref{th:FCLT HJ FN}}
We use~\eqref{eq:decompose HJ EP}.
From the proof of Theorem~\ref{th:FCLT Lumley}, we know that $\GL$ is tight.
Together with Theorem~\ref{th:FCLT Lumley}(i), it then follows that the limit behavior of $\sqrt{n}(\FHJ-\FN)$
is the same as that of the process $\mathbb{Y}_N$ defined in~\eqref{def:Y_N}.
This process can be written as
\[
\mathbb{Y}_N(t)
=
\frac{\sqrt{n}}{N}
\sum_{i=1}^N
\left(
\frac{\xi_i}{\pi_i}-1
\right)
\mathds{1}_{\{Y_i\leq t\}}
-
F(t)
\frac{\sqrt{n}}{N}
\sum_{i=1}^N
\left(
\frac{\xi_i}{\pi_i}-1
\right).
\]
As in the proofs of
Theorems~\ref{th:FCLT HT FN}, \ref{th:FCLT HT F}, and~\ref{th:FCLT Lumley},
we first consider the case of uniform $Y_i$'s.
The first process on the right hand side is $\sqrt{n}(\FHT-\FN)$, which converges weakly
to a continuous process, according to Theorem~\ref{th:FCLT HT FN},
whereas the second process also converges to a continuous process due to~\eqref{eq:CLT Nhat}.
As in the proof of Theorem~\ref{th:FCLT HT F}, one can then argue that~$\mathbb{Y}_N$, being the difference
of these processes, is tight.
Next, we prove weak convergence of the finite dimensional projections.
{Details can be found in the supplement A in~\cite{Boistard_2015}.}
\hfill\tqed

\bigskip

The proofs of Propositions~\ref{prop:FCLT HJ FN} and~\ref{prop:FCLT HJ F} are similar to those of
Theorems~\ref{th:FCLT HJ FN} and~\ref{th:FCLT Lumley}, respectively, and can be found in the supplement A in~\cite{Boistard_2015}.

{
\paragraph{Proof of Corollary~\ref{cor:high entropy}}
Similar to the approach followed in~\cite{berger1998b},
we first prove the results for a rejective sampling $R$ and then extend them to high entropy designs.

First note that $\mathbb{E}_d(\xi_i-\pi_i)(\xi_j-\pi_j)=\pi_{ij}-\pi_i\pi_j$.
According to Theorem~1 in~\cite{Boistard_2012}, which is an extension of Theorem~5.2 in~\cite{Hajek_1964}, together with (C1) and~(A2),
for sampling design $R$,
\begin{equation}
\label{eq:order2 expansion}
\begin{split}
\pi_{ij}-\pi_i\pi_j
&=
\pi_i\pi_j
\left\{
-\frac{1}{d_N}
(1-\pi_i)(1-\pi_j)
+
O(d_N^{-2})
\right\}\\
&=
O(n^2/(N^2d_N)),
\end{split}
\end{equation}
$\omega$-almost surely.
Therefore, together with (A3), condition (C2) follows, $\omega$-almost surely.
For condition (C3), according to Lemma 2 in~\cite{Boistard_2012}, the third order correlation
$\mathbb{E}_d(\xi_i-\pi_i)(\xi_j-\pi_j)(\xi_k-\pi_k)$
splits into terms of the form $(\pi_{ij}-\pi_i\pi_j)\pi_k$
and the term $\pi_{ijk}-\pi_i\pi_j\pi_k$.
Similar to~\eqref{eq:order2 expansion}, together with Theorem~1 in~\cite{Boistard_2012},
the latter term can be shown to be of the order $O(n^3/(N^3d_N))$,
whereas other terms are of the same order according to (C1)-(C2) and~(A2).
Again, together with (A3), condition (C3) follows, $\omega$-almost surely.
According to Proposition~1 in~\cite{Boistard_2012},
\[
|\mathbb{E}_d(\xi_i-\pi_i)(\xi_j-\pi_j)(\xi_k-\pi_k)(\xi_l-\pi_l)|
=
O(d_N^{-2}),
\quad
\text{a.s.}-\mathbb{P}_m.
\]
Hence, together with (A4), condition (C4) follows, $\omega$-almost surely.
Theorems~\ref{th:FCLT HT FN} and~\ref{th:FCLT HT F} are now immediate,
when either (HT2) holds or (HT2)-(HT4), respectively, which establishes parts~(i) and~(ii)
for the rejective sampling design $R$.
For parts~(iii) and~(iv), it can be seen that under design~$R$,
\[
\begin{split}
\frac{n}{N^2}
\ssum_{i\ne j}
\frac{\pi_{ij}-\pi_i\pi_j}{\pi_i\pi_j}
&=
-
\frac{n}{N^2}
\ssum_{i\ne j}
\frac{(1-\pi_i)(1-\pi_j)}{d_N}
+
O\left(n/d_N^2\right)\\
&=
-\frac{n}{N^2d_N}(N-n)^2
+
O(1/d_N)+O\left(n/d_N^2\right)\\
&\to\alpha,
\end{split}
\]
with (A2)-(A3) and~(A5).
Hence, Theorems~\ref{th:FCLT HJ FN} and~\ref{th:FCLT HJ F} are now immediate with $\mu_{\pi2}=-\alpha$,
when either (HT3) and (HJ2) hold or (HT3), (HJ2), and~(HJ4), respectively, which establishes parts~(iii) and~(iv)
for rejective sampling design $R$.

To extend these results to high entropy designs, we use the same approach as in~\cite{Bertail_2013}.
They use the bounded Lipschitz metric for random elements $X$ and~$Y$ on a metric space $\mathbb{D}$:
\[
d_{\mathrm{BL}}(X,Y)
=
\sup_{f\in \mathrm{BL}_1}
|\mathbb{\mathbb{E}}f(Y)-\mathbb{E}f(X)|,
\]
where $\mathrm{BL}_1$ is the class of Lipshitz functions with Lipshitz norm bounded by one.
See~\cite{van_1996}, page 73, who define the metric $d_{\mathrm{BL}}$ on the space of separable Borel measures.
Weak convergence is metrizable by this metric, i.e.,
\[
X_\alpha\rightsquigarrow X
\quad
\Leftrightarrow
\quad
\sup_{f\in \mathrm{BL}_1}
|\mathbb{\mathbb{E}}^*f(X_\alpha)-\mathbb{E}f(X)|
\to
0.
\]
{Now, consider part~(i) and let $P$ be a high entropy design.
Let $R$ be some rejective sampling design such that $D(P\|R)\to 0$.
Given the inclusion probabilities $\pi_1(P),\ldots,\pi_N(P)$, there exists a rejective sampling design $\widetilde{R}$ such that $\pi_i(\widetilde{R})=\pi_i(P)$. Note that $D(P\|\widetilde{R})\leq D(P\|R)\to 0$, according to Lemma~3 in~\cite{berger1998b}.

Consider the Horvitz-Thompson process for the design $P$
\[
\mathbb{G}_{P}^{\pi(P)}(t)
=
\sqrt{n}
\left(
\frac{1}{N}
\sum_{i=1}^N
\frac{\xi_i(P)\mathds{1}_{\{Y_i\leq t\}}}{\pi_i(P)}
-
\frac{1}{N}
\sum_{i=1}^N
\mathds{1}_{\{Y_i\leq t\}}
\right),
\]
and compare this with the same process for design $\tilde{R}$,
\[
\mathbb{G}_{\widetilde{R}}^{\pi(P)}(t)
=
\sqrt{n}
\left(
\frac{1}{N}
\sum_{i=1}^N
\frac{\xi_i(\widetilde{R})\mathds{1}_{\{Y_i\leq t\}}}{\pi_i(P)}
-
\frac{1}{N}
\sum_{i=1}^N
\mathds{1}_{\{Y_i\leq t\}}
\right).
\]
Then, because
$\mathbb{E}_d[\xi_i(P)]=\sum_{s\in \mathcal{S}_N} P(s)\delta_i(s)$,
where $\delta_i(s)=1$ when $i\in s$ and zero otherwise,
it follows that for $\mathbb{E}_df(\mathbb{G}_{P}^{\pi(P)})$, the argument inside $f$ is independent of the design $P$.
Hence, for any $f\in \mathrm{BL}_{1}$, one finds
\[
\left|
\mathbb{E}_df\left(\mathbb{G}_{P}^{\pi(P)}\right)
-
\mathbb{E}_df\left(\mathbb{G}_{\widetilde{R}}^{\pi(P)}\right)
\right|
\leq
\sum_{s\in \mathcal{P}(\mathcal{U}_N)}
|P(s)-\widetilde{R}(s)|
\leq
\sqrt{2D(P\|\widetilde{R})},
\]
using Lemma 2 in~\cite{berger1998b}.
As $|\mathbb{E}_{d,m}f(Y)-\mathbb{E}_{d,m}f(X)|
\leq
\mathbb{E}_{m}
\left|
\mathbb{E}_{d}f(Y)-\mathbb{E}_{d}f(X)
\right|$,
it follows that $d_{\mathrm{BL}_1}(\mathbb{G}_{P}^{\pi(P)},\mathbb{G}_{\widetilde{R}}^{\pi(P)})\to0$.
Because part~(i) has already been established for rejective sampling design $\widetilde{R}$, we obtain that $\mathbb{G}_{\widetilde{R}}^{\pi(P)}\to \mathbb{G}$ weakly.
Hence, $d_{\mathrm{BL}_1}(\mathbb{G}_{\widetilde{R}}^{\pi(P)},\mathbb{G})\to0$ and therefore
\[
d_{\mathrm{BL}_1}(\mathbb{G}_{P}^{\pi(P)},\mathbb{G})
\leq
d_{\mathrm{BL}_1}(\mathbb{G}_{P}^{\pi(P)},\mathbb{G}_{\widetilde{R}}^{\pi(P)})
+
d_{\mathrm{BL}_1}(\mathbb{G}_{\widetilde{R}}^{\pi(P)},\mathbb{G})
\to0
\]
which means that $\mathbb{G}_{P}^{\pi(P)}\to \mathbb{G}$ weakly.
This establishes part(i) for high entropy design~$P$.}
Parts (ii)-(iv) are obtained in the same way.
\hfill\tqed

\paragraph{Proof of Corollary~\ref{cor:fixed size}}
We first re-prove Lemma~\ref{lem:tightness HT FN} under conditions (C2$^*$)-(C4$^*$).
Because $n$ is deterministic, it can be taken out of the expectation~$\mathbb{E}_{d,m}$.
When also $\pi_1,\ldots,\pi_N$ are deterministic,
this means that the expectation $\mathbb{E}_d$ over the~$\xi_i$'s can be separated from the expectation~$\mathbb{E}_m$ over the $A_i$'s and $B_j$'s
in~\eqref{eq:decomposition}.
It follows that
\begin{equation}
\label{eq:bound sum1}
\begin{split}
&\frac{1}{N^4}
\sum_{i=1}^N\sum_{k=1}^N
\mathbb{E}_{d,m}\left[n^2\alpha_i^2\beta_k^2\right]\\
&=
\frac{n^2}{N^4}
\ssum_{(i,k)\in D_{2,N}}
\frac{\mathbb{E}_d\left[(\xi_i-\pi_i)^2(\xi_k-\pi_k)^2\right]}{\pi_i^2\pi_k^2}
p_1p_2.
\end{split}
\end{equation}
Straightforward computation shows that $\mathbb{E}_{d}(\xi_i-\pi_i)^2(\xi_k-\pi_k)^2$ equals
\[
(\pi_{ik}-\pi_i\pi_k)(1-2\pi_i)(1-2\pi_k)
+
\pi_i\pi_k(1-\pi_i)(1-\pi_k).
\]
The contribution of the last term is
\[
\frac{n^2}{N^4}
\ssum_{(i,k)\in D_{2,N}}
\frac{\pi_i\pi_k(1-\pi_i)(1-\pi_k)}{\pi_i^2\pi_k^2}
\leq
\left(\frac{n}{N^2}\sum_{i=1}^N\left(\frac{1}{\pi_i}-1\right)\right)^2
=
O(1),
\]
according to condition~(i) of Proposition~\ref{prop:FCLT HT FN}.
With (C1) and (C2$^*$), the contribution of the first term is
\begin{equation}
\label{eq:application C1*}
\begin{split}
&\frac{n^2}{N^4}
\ssum_{(i,k)\in D_{2,N}}
\frac{(\pi_{ik}-\pi_i\pi_k)(1-2\pi_i)(1-2\pi_k)}{\pi_i^2\pi_k^2}\\
&\leq
O\left(\frac{N^2}{n^2}\right)
\frac{n^2}{N^4}\cdot N
\sum_{i\ne k}
\left|\frac{\pi_{ik}-\pi_i\pi_k}{\pi_i\pi_k}\right|
=
O\left(\frac{1}{n}\right).
\end{split}
\end{equation}
This establishes~\eqref{eq:term1total FN}.

For the second (and third) summation on the right hand side of~\eqref{eq:decomposition}, we have
\[
\begin{split}
&
\frac{1}{N^4}
\left|
\sum_{i=1}^N\sum_{j\ne i}\sum_{k=1}^N
\mathbb{E}_{d,m}\left[n^2\alpha_i\alpha_j\beta_k^2\right]
\right|\\
&\leq
\frac{n^2}{N^4}
\left|
\sssum_{(i,j,k)\in D_{3,N}}
\mathbb{E}_{d}
\left[
\frac{(\xi_i-\pi_i)(\xi_j-\pi_j)(\xi_k-\pi_k)^2}{\pi_i\pi_j\pi_k^2}
\right]
\right|
p_1p_2
\end{split}
\]
We still have that $\mathbb{E}_{d}(\xi_i-\pi_i)(\xi_j-\pi_j)(\xi_k-\pi_k)^2$ equals
\[
(1-2\pi_k)
\mathbb{E}_{d}
(\xi_i-\pi_i)(\xi_j-\pi_j)(\xi_k-\pi_k)
+
\pi_k(1-\pi_k)
\mathbb{E}_{d}
(\xi_i-\pi_i)(\xi_j-\pi_j).
\]
The contribution of the last term is
\[
\begin{split}
&
\left|
\frac{n^2}{N^4}
\sssum_{(i,j,k)\in D_{3,N}}
\left(\frac{1}{\pi_k}-1\right)
\frac{\pi_{ij}-\pi_i\pi_j}{\pi_i\pi_j}
\right|\\
&\leq
\left|\frac{n}{N^2}
\ssum_{(i,j)\in D_{2,N}}
\frac{\pi_{ij}-\pi_i\pi_j}{\pi_i\pi_j}
\right|
\cdot
\frac{n}{N^2}
\sum_{k=1}^N
\left(\frac{1}{\pi_k}-1\right)=O(1),
\end{split}
\]
according to conditions (i)-(ii) of Proposition~\ref{prop:FCLT HT FN}.
From Lemma 2 in~\cite{Boistard_2012}, we have that $\mathbb{E}_{d}\xi_i-\pi_i)(\xi_j-\pi_j)(\xi_k-\pi_k)$ splits into
\begin{enumerate}
  \item
  $-(\pi_{ij}-\pi_i\pi_j)\pi_k-(\pi_{ik}-\pi_i\pi_k)\pi_j-(\pi_{jk}-\pi_j\pi_k)\pi_i$.
  \item
  $\pi_{ijk}-\pi_i\pi_j\pi_k$.
\end{enumerate}
According to (C1) and (C2$^*$), the contribution of the terms in the first case is of the order~$O(1)$
similarly to~\eqref{eq:application C1*},
whereas (C1) and (C3$^*$) yield that the contribution of the second case is also of the order~$O(1)$.
This establishes~\eqref{eq:toprove sum2 FN}.

Finally,
\[
\begin{split}
&
\frac1{N^4}\sum_{i=1}^N\sum_{j\neq i}\sum_{k=1}^N\sum_{l\neq k}
\mathbb{E}_{d,m}\left[n^2\alpha_i\alpha_j\beta_k\beta_l\right]\\
&=
\frac{n^2}{N^4}
\sum_{(i,j,k,l)\in D_{4,N}}
\frac{\mathbb{E}_{d}\left[(\xi_i-\pi_i)(\xi_j-\pi_j)(\xi_k-\pi_k)(\xi_l-\pi_l)\right]}{\pi_i\pi_j\pi_k\pi_l}p_1^2p_2^2.
\end{split}
\]
Because $0\leq p_1,p_2\leq 1$, together with (C4$^*$), we obtain~\eqref{eq:toprove sum3 FN}.
Together with~\eqref{eq:term1total FN}, \eqref{eq:toprove sum2 FN} and decomposition~\eqref{eq:decomposition},
this proves Lemma~\ref{lem:tightness HT FN}.

Furthermore, at the cost of some extra technicalities,
it can be seen that Lemma~\ref{lem:variance HT} in~\cite{Boistard_2015} holds with (C2$^*$) and conditions (i)-(ii)
from Proposition~\ref{prop:FCLT HT FN} instead of (C2).
Details can be found in the supplement B in~\cite{Boistard_2015}.
From here on, the proofs of Propositions~\ref{prop:FCLT HT FN}, \ref{prop:FCLT HT F}, \ref{prop:FCLT HJ FN},
and~\ref{prop:FCLT HJ F} remain the same.
\hfill\tqed

\bigskip

The proofs for Corollaries~\ref{cor:poverty rate HT} and~\ref{cor:poverty rate HJ}
are fairly straightforward and can be found in the supplement Ai in~\cite{Boistard_2015}.

%
%\begin{supplement}[id=suppA]
%  \sname{Supplement A}
%  \stitle{Proofs for results in the main text}
%  \slink[doi]{COMPLETED BY THE TYPESETTER}
%  \sdatatype{.pdf}
%  \sdescription{This supplement contains detailed proofs of Lemmas, Propositions and Corollaries for results in the main text, that are not present
%  in Section~\ref{sec:proofs}.}
%\end{supplement}
%
%\begin{supplement}[id=suppB]
%  \sname{Supplement B}
%  \stitle{Additional technicalities}
%  \slink[doi]{COMPLETED BY THE TYPESETTER}
%  \sdatatype{.pdf}
%  \sdescription{This supplement contains detailed proofs of some remarks and additional lemmas.}
%\end{supplement}

}

%\bibliographystyle{imsart-nameyear}
%\bibliographystyle{imsart-number}
%\bibliography{FTCL_bib}
\bibliography{FTCL_bib}
\noindent
H\'el\`ene Boistard\\
Toulouse School of Economics\\
21 all\'ee de Brienne\\
31000 Toulouse, France\\
e-mail: helene@boistard.fr\\

\noindent
Hendrik P. Lopuha\"a\\
Delft Institute of Applied Mathematics\\
Delft University of Technology\\
Delft, The Netherlands\\
e-mail: h.p.lopuhaa@tudelft.nl\\

\noindent
Anne Ruiz-Gazen\\
Toulouse School of Economics\\
21 all\'ee de Brienne\\
31000 Toulouse, France\\
e-mail: anne.ruiz-gazen@tse-fr.eu\\

\newpage

\newpage

\section*{Supplement}

\appendix
\section{Proofs for results in the main text}
\label{sec:supplemental material}

{\paragraph{Proof of Lemma~\ref{lem:fidis HT FN}}
We will use the Cram\'er-Wold device.
Note that any linear combination
\begin{equation}
\label{eq:lincomb HT FN}
a_1\sqrt{n}
\left\{
\FHT(t_1)-\FN(t_1)
\right\}
+
\cdots
+
a_k
\sqrt{n}
\left\{
\FHT(t_k)-\FN(t_k)
\right\}
\end{equation}
can be written as
\begin{equation}
\label{eq:fidis HT FN}
\sqrt{n}
\left\{
\frac1N\sum_{i=1}^N
\frac{\xi_i}{\pi_i}V_{ik}
-
\frac1N\sum_{i=1}^N
V_{ik}
\right\},
\end{equation}
where
\begin{equation}
\label{eq:def Vik}
V_{ik}
=
a_1\mathds{1}_{\{Y_i\leq t_1\}}
+\cdots+
a_k\mathds{1}_{\{Y_i\leq t_k\}}
=
\mathbf{a}_k^t\mathbf{Y}_{ik}
\end{equation}
with $\mathbf{Y}_{ik}^t=(\mathds{1}_{\{Y_i\leq t_1\}},\ldots,\mathds{1}_{\{Y_i\leq t_k\}})$ and $\mathbf{a}_k^t=(a_1, \dots, a_k)$.
For the corresponding design-based variance, we have
\begin{equation}
\label{eq:conv SN}
\begin{split}
nS_N^2
&=
\frac{n}{N^2}
\sum_{i=1}^N\sum_{j=1}^N
\frac{\pi_{ij}-\pi_i\pi_j}{\pi_i\pi_j}
V_{ik}V_{jk}\\
&=
\mathbf{a}_k^t
\left(
\frac{n}{N^2}
\sum_{i=1}^N\sum_{j=1}^N
\frac{\pi_{ij}-\pi_i\pi_j}{\pi_i\pi_j}
\mathbf{Y}_{ik}\mathbf{Y}_{jk}^t
\right)
\mathbf{a}_k
\to
\mathbf{a}_k^t\mathbf{\Sigma}_k^{\mathrm{HT}}\mathbf{a}_k,
\end{split}
\end{equation}
$\omega$-almost surely, according to (HT2),
where $\mathbf{\Sigma}_k^{\mathrm{HT}}$ can obtained from~\eqref{eq:HT2 alternative moment}.
Together with (HT1), it follows that~\eqref{eq:lincomb HT FN} converges in distribution to a mean zero normal random variable
with variance $\mathbf{a}_k^t\mathbf{\Sigma}_k^{\mathrm{HT}}\mathbf{a}_k$.
We conclude that~\eqref{eq:lincomb HT FN} converges in distribution to
$a_1N_1+\cdots+a_kN_k$, where $(N_1,\ldots,N_k)$ has a $k$-variate mean zero normal distribution
with covariance matrix $\mathbf{\Sigma}_k^{\mathrm{HT}}$.
According to the Cram\'er-Wold device this proves the lemma.
\hfill\tqed}

\paragraph{Proof of Proposition~\ref{prop:FCLT HT FN}}
The proof is similar to that of Theorem~\ref{th:FCLT HT FN}.
First consider the case of uniform $Y_i$'s with $F(t)=t$.
We only have to verify the weak convergence of the finite dimensional projections
of the process $\mathbb{X}_N=\sqrt{n}(\FHT-\FN)$.
Consider~\eqref{eq:lincomb HT FN} represented as in~\eqref{eq:fidis HT FN}.
From~(HT1) and Lemma~\ref{lem:variance HT}(ii) in~\cite{Boistard_2015} we conclude that~\eqref{eq:lincomb HT FN} converges in distribution to a mean zero normal random variable
with variance
\[
\begin{split}
\sigma_{\mathrm{HT}}^2
&=
\mu_{\pi1}
\mathbb{E}_m\left[V_{1k}^2\right]
+
\mu_{\pi2}
\left(\mathbb{E}_m\left[V_{1k}\right]\right)^2\\
&=
\mu_{\pi1}
\mathbf{a}_k^t\mathbb{E}_m\left[\mathbf{Y}_{1k}\mathbf{Y}_{1k}^t\right]\mathbf{a}_k
+
\mu_{\pi2}
\mathbf{a}_k^t\left(\mathbb{E}_m\mathbf{Y}_{1k}\right)\left(\mathbb{E}_m\mathbf{Y}_{1k}\right)^t\mathbf{a}_k
=
\mathbf{a}_k^t\mathbf{\Sigma}_k\mathbf{a}_k,
\end{split}
\]
where $\mathbf{\Sigma}_k$ is the $k\times k$-matrix
with $(q,r)$-element equal to $\mu_{\pi_1}(t_q\wedge t_r)+\mu_{\pi2}t_qt_r$.
%\[
%\mu_{\pi_1}(t_q\wedge t_r)+\mu_{\pi2}t_qt_r.
%\]
We conclude that~\eqref{eq:lincomb HT FN} converges in distribution to
$a_1N_1+\cdots+a_kN_k$, where $(N_1,\ldots,N_k)$ has a $k$-variate mean zero normal distribution
with covariance matrix $\mathbf{\Sigma}_k$.
As in the proof of Lemma~\ref{lem:fidis HT FN}, by means of the Cram\'er-Wold device this establishes the limit distribution
of $(\mathbb{X}_N(t_1),\ldots,\mathbb{X}_N(t_k))$, which is the same that of the vector
$(\mathbb{G}^{\mathrm{HT}}(t_1),\ldots,\mathbb{G}^{\mathrm{HT}}(t_k))$,
where $\mathbb{G}^{\mathrm{HT}}$ is a mean zero Gaussian process with covariance function
$
\mathbb{E}_{d,m}\mathbb{G}^{\mathrm{HT}}(s)\mathbb{G}^{\mathrm{HT}}(t)
=
\mu_{\pi1}(s\wedge t)+\mu_{\pi2}st.
$
%\[
%\mathbb{E}_{d,m}\mathbb{G}^{\mathrm{HT}}(s)\mathbb{G}^{\mathrm{HT}}(t)
%=
%\mu_{\pi1}(s\wedge t)+\mu_{\pi2}st.
%\]
From here on, the proof is completely the same as that of Theorem~\ref{th:FCLT HT FN}.
\hfill
\tqed

{\paragraph{Proof of Lemma~\ref{lem:HT CLT}}
We decompose as follows
\[
\begin{split}
\frac{1}{S_N}
\left(
\frac{1}{N}
\sum_{i=1}^N
\frac{\xi_iV_i}{\pi_i}
-
\mu_{V}
\right)
=
&\frac{1}{S_N}
\left(
\frac{1}{N}
\sum_{i=1}^N
\frac{\xi_iV_i}{\pi_i}
-
\frac{1}{N}
\sum_{i=1}^N
V_i
\right)\\
&
+
\frac{1}{\sqrt{n}S_N}
\times
\frac{\sqrt{n}}{\sqrt{N}}\times \sqrt{N}
\left(
\frac{1}{N}
\sum_{i=1}^N
V_i
-
\mu_{V}
\right).
\end{split}
\]
According to (HT3), the central limit theorem, Slutsky's theorem, and the fact that $nS_N^2\to\sigma_{\mathrm{HT}}^2>0$ in probability,
\begin{equation}
\label{eq:RBKterm2}
\frac{1}{\sqrt{n}S_N}
\times
\frac{\sqrt{n}}{\sqrt{N}}\times \sqrt{N}
\left(
\frac{1}{N}
\sum_{i=1}^N
V_i
-
\mu_{V}
\right)
\to
N(0,\lambda\sigma^2_V/\sigma^2_{\mathrm{HT}}),
\end{equation}
in distribution under $\mathbb{P}_m$,
whereas, thanks to (HT1),
\begin{equation}
\label{eq:RBKterm1}
\frac{1}{S_N}
\left(
\frac{1}{N}
\sum_{i=1}^N
\frac{\xi_iV_i}{\pi_i}
-
\frac{1}{N}
\sum_{i=1}^N
V_i
\right)
\to
N(0,1),
\quad
\omega-\text{a.s.,}
\end{equation}
in distribution under $\mathbb{P}_d$.
Since the latter limit distribution does not depend on $\omega$, we can apply Theorem~5.1(iii)
from~\cite{Rubin-Bleuer_Kratina_2005}.
It follows that
\[
\frac{1}{S_N}
\left(
\frac{1}{N}
\sum_{i=1}^N
\frac{\xi_iV_i}{\pi_i}
-
\mu_{V}
\right)
\to
N\left(0,1+\lambda\sigma^2_V/\sigma^2_{\mathrm{HT}}\right),
\]
in distribution under $\mathbb{P}_{d,m}$.
Together with $nS_N^2\to\sigma_{HT}^2$ in probability, this implies that the random variable in~\eqref{eq:HT estimator}
converges to a mean zero normal random variable with variance
$\sigma_{\mathrm{HT}}^2+ \lambda \sigma_V^2$.
\hfill\tqed}

%%%%
\paragraph{Proof of Lemma~\ref{lem:fidis HT F}}
We will use the Cram\'er-Wold device.
To this end, we determine the limit distribution of
$a_1\mathbb{X}_N^F(t_1)+\cdots+a_k\mathbb{X}_N^F(t_k)$, for $a_1,\ldots,a_k\in \mathbb{R}$ fixed
and $\mathbf{a}_k^t=(a_1,\ldots,a_k)\ne (0,\ldots,0)$.
As in the proof of Lemma~\ref{lem:fidis HT FN}, we consider
\begin{equation}
\label{eq:lincomb HT F}
a_1\mathbb{X}_N^F(t_1)+\cdots+a_k\mathbb{X}_N^F(t_k)
=
\sqrt{n}
\left(
\frac{1}{N}
\sum_{i=1}^N
\frac{\xi_i}{\pi_i}
V_{ik}-\mu_k
\right),
\end{equation}
where $V_{ik}$ is defined in~\eqref{eq:def Vik}.
We want to apply Lemma~\ref{lem:HT CLT}.
As in~\eqref{eq:conv SN},
\begin{equation}
\label{eq:sigmatilde}
nS_N^2
\to
\mathbf{a}_k^t\mathbf{\Sigma}_k^{\mathrm{HT}}\mathbf{a}_k,
\quad
\omega-\text{a.s.},
\end{equation}
where $\mathbf{a}_k^t\mathbf{\Sigma}_k^{\mathrm{HT}}\mathbf{a}_k>0$, thanks to (HT4).
This means that, according to Lemma~\ref{lem:HT CLT}, the right hand side of~\eqref{eq:lincomb HT F}
converges in distribution under $\mathbb{P}_{d,m}$ to a mean zero normal random variable with variance
\[
\mathbf{a}_k^t\mathbf{\Sigma}_k^{\mathrm{HT}}\mathbf{a}_k+\lambda\left\{\mathbb{E}_{m}[V_{1k}^2]-\left(\mathbb{E}_{m}[V_{1k}]\right)^2\right\}
=
\mathbf{a}_k^t\SigmaHTF\mathbf{a}_k,
\]
where
\begin{equation}
\label{eq:SigmaHTF}
\SigmaHTF
=
\mathbf{\Sigma}_k^{\mathrm{HT}}
+
\lambda
\mathbf{\Sigma}_F.
\end{equation}
We conclude that~\eqref{eq:lincomb HT F} converges in distribution to $a_1N_1+\cdots+a_kN_k$, where $(N_1,\ldots,N_k)$ has a
mean zero $k$-variate
normal distribution with covariance matrix $\SigmaHTF$.
By the Cram\'er-Wold device, this proves the lemma.
\hfill
\tqed

\paragraph{Proof of Lemma~\ref{lem:positive definite}}
Without loss of generality we may assume $0<F(t_1)<\cdots<F(t_k)< 1$,
since we can permute the rows and columns of $\mathbf{M}$ without changing the determinant.
For the entries of $\mathbf{M}$ we can distinguish three situations:
\begin{enumerate}
\item
if $1\leq j<i\leq k$, then $M_{ij}=aF(t_j)-bF(t_i)F(t_j)$
\item
if $1\leq i=j\leq k$, then $M_{ij}=aF(t_i)-bF(t_i)^2$
\item
if $1\leq i<j\leq k$, then $M_{ij}=aF(t_i)-bF(t_i)F(t_j)$.
\end{enumerate}
Now, for $2\leq i\leq k$, multiply the $i$-th row by $F(t_1)/F(t_i)$.
This changes the determinant with a factor $F(t_1)^{k-1}/F(t_2)\cdots F(t_k)>0$,
and as a result, all entries in column $j$, at positions $1\leq i\leq j\leq k$, are the same: $aF(t_1)-bF(t_1)F(t_j)$.
Hence, if we subtract row-2 from row-1, then row-3 from row-2, \ldots, and then row-$k$ from row-$(k-1)$,
we get a new matrix $\mathbf{M}'$ with a right-upper triangle consisting of zero's and a main diagonal
with elements $M'_{ii}=aF(t_1)-aF(t_1)F(t_i)/F(t_{i+1})$, if $1\leq i\leq k-1$, and $M'_{kk}=aF(t_1)-bF(t_1)F(t_k)$.
It follows that
\[
\begin{split}
&\text{det}(\mathbf{M})
=
\frac{F(t_2)\cdots F(t_k)}{F(t_1)^{k-1}}
\text{det}(\mathbf{M}')\\
&=
a^{k-1}F(t_1)
(F(t_2)-F(t_1))\cdots(F(t_k)-F(t_{k-1}))
(a-bF(t_k))>0,
\end{split}
\]
since $a>0$, $0<F(t_1)<\cdots<F(t_k)< 1$, and $a-bF(t_k)>a-b\ge 0$.
\hfill
\tqed

\paragraph{Proof of Lemma~\ref{lem:fidis HT F deterministic}}
The proof is similar to that of Lemma~\ref{lem:fidis HT F}.
We determine the limit distribution of~\eqref{eq:lincomb HT F}.
Note that without loss of generality we can assume that
$0\leq F(t_1)\leq \cdots\leq F(t_k)\leq 1$.
In contrast with the proof of Lemma~\ref{lem:fidis HT F},
we now have to distinguish between several cases.

We first consider the situation where all $F(t_i)$'s are distinct and such that $0<F(t_i)<1$.
From (HT1) and Lemma~\ref{lem:variance HT}(ii) we conclude that
\[
nS_N^2
\to
\sigma_{\mathrm{HT}}^2
=
\mu_{\pi1}\mathbb{E}_{m}[V_{1k}^2]+\mu_{\pi2}\left(\mathbb{E}_{m}[V_{1k}]\right)^2
=
\mathbf{a}_k^t\mathbf{\Sigma}_k\mathbf{a}_k,
\]
where
\begin{equation}
\label{eq:SigmaHTFN}
\mathbf{\Sigma}_k
=
\Big(
\mu_{\pi1}F(t_q\wedge t_r)+\mu_{\pi2}F(t_q)F(t_r)
\Big)_{q,r=1}^k.
\end{equation}
First note that
\[
\begin{split}
\mu_{\pi1}+\mu_{\pi2}
&=
\lim_{N\to\infty}
\frac{n}{N^2}\sum_{i=1}^N\sum_{j=1}^N\frac{\pi_{ij}-\pi_i\pi_j}{\pi_i\pi_j}
=
\lim_{N\to\infty}
\frac{n}{N^2}
\text{Var}
\left(
\sum_{i=1}^N
\frac{\xi_i}{\pi_i}
\right)
\geq0.
\end{split}
\]
Therefore, together with condition~(i) we can apply Lemma~\ref{lem:positive definite} with $a=\mu_{\pi1}$ and $b=-\mu_{\pi2}$.
It follows that~$\mathbf{\Sigma}_k$ is positive definite,
so that $\sigma_{\mathrm{HT}}^2>0$.
This means that, according to Lemma~\ref{lem:HT CLT}, the right hand side of~\eqref{eq:lincomb HT F}
converges in distribution under $\mathbb{P}_{d,m}$ to a mean zero normal random variable with variance
$
(\mu_{\pi1}+\lambda)\mathbb{E}_{m}[V_{1k}^2]+(\mu_{\pi2}-\lambda)\left(\mathbb{E}_{m}[V_{1k}]\right)^2
=
\mathbf{a}_k^t\SigmaHTF\mathbf{a}_k$,
where
\begin{equation}
\label{eq:SigmaHTFprop}
\begin{split}
\SigmaHTF
=
\Big(
(\mu_{\pi1}+\lambda)F(t_q\wedge t_r)+(\mu_{\pi2}-\lambda)F(t_q)F(t_r)
\Big)_{q,r=1}^k.
\end{split}
\end{equation}
We conclude that~\eqref{eq:lincomb HT F} converges in distribution to $a_1N_1+\cdots+a_kN_k$, where $(N_1,\ldots,N_k)$ has a
mean zero $k$-variate
normal distribution with covariance matrix $\SigmaHTF$.
By means of the Cram\'er-Wold device, this proves the lemma for the case that $0<F(t_1)<\cdots<F(t_k)<1$.

The case that the $F(t_i)$'s are not all distinct, but still satisfy $0<F(t_i)<1$,
can be reduced to the case where all $F(t_i)$'s are distinct.
This can be seen as follows.
For simplicity, suppose $F(t_1)=\cdots=F(t_m)=F(t_0)$, with $0<F(t_0)<F(t_{m+1})<\cdots<F(t_k)<1$.
Then we can write~\eqref{eq:lincomb HT F} as
\begin{equation}
\label{eq:lin comb a0}
a_0\mathbb{X}_N^F(t_0)+a_{m+1}\mathbb{X}_N^F(t_{m+1})+\cdots+a_k\mathbb{X}_N^F(t_k),
\end{equation}
where $a_0=a_1+\cdots+a_m$.
As before, with (HT4) and Lemma~\ref{lem:positive definite},
it follows from Lemma~\ref{lem:HT CLT} that~\eqref{eq:lin comb a0} converges in distribution
to a mean zero normal random variable with variance $\mathbf{a}_0^t\mathbf{\Sigma}_0^F\mathbf{a}_0$, where
$\mathbf{a}_0=(a_0,a_{m+1},\ldots,a_k)^t$ and
\[
\mathbf{\Sigma}_0^F
=
\gamma_{\pi1}\mathbb{E}_{m}[\mathbf{Y}_{0}\mathbf{Y}_{0}^t]
+
(\gamma_{\pi2}-\lambda)\left(\mathbb{E}_{m}[\mathbf{Y}_{0}]\right)\left(\mathbb{E}_{m}[\mathbf{Y}_{0}]\right)^t,
\]
with $\mathbf{Y}_0=(\mathds{1}_{\{Y_i\leq t_0\}},\mathds{1}_{\{Y_i\leq t_{m+1}\}},\ldots,\mathds{1}_{\{Y_i\leq t_k\}})^t$.
However, note that
\[
\begin{split}
\mathbf{a}_0^t\mathbf{Y}_{0}
&=
(a_1+\cdots+a_m)\mathds{1}_{\{Y_i\leq t_0\}}
+
a_{m+1}\mathds{1}_{\{Y_i\leq t_{m+1}\}}+\cdots+a_k\mathds{1}_{\{Y_i\leq t_k\}}\\
&=
a_1\mathds{1}_{\{Y_i\leq t_1\}}+\cdots+a_k\mathds{1}_{\{Y_i\leq t_k\}}
=
\mathbf{a}_k^t\mathbf{Y}_{1k},
\end{split},
\]
where $\mathbf{a}_k=(a_1,\ldots,a_k)^t$ and
$\mathbf{Y}_{1k}=(\mathds{1}_{\{Y_i\leq t_1\}},\ldots,\mathds{1}_{\{Y_i\leq t_k\}})^t$, as before.
This means that $\mathbf{a}_0^t\mathbf{\Sigma}_0^F\mathbf{a}_0=\mathbf{a}_k^t\SigmaHTF\mathbf{a}_k$,
with $\SigmaHTF$ from~\eqref{eq:SigmaHTF}.
It follows that~\eqref{eq:lincomb HT F} converges in distribution to $a_1N_1+\cdots+a_kN_k$, where $(N_1,\ldots,N_k)$ has a
mean zero $k$-variate
normal distribution with covariance matrix $\SigmaHTF$.
By means of the Cram\'er-Wold device, this proves the lemma for the case
$F(t_1)=\cdots=F(t_m)=F(t_0)<F(t_{m+1})<\cdots<F(t_k)<1$.
The argument is the same for other cases with multiple $F(t_i)\in(0,1)$ being equal to each other.

Next, consider the case $F(t_1)=0$.
In this case, $\mathds{1}_{\{Y_i\leq t_1\}}=0$ with probability one.
This means that the summation on the left hand side of~\eqref{eq:lincomb HT F} reduces to
$a_2\mathbb{X}_N^F(t_2)+\cdots+a_k\mathbb{X}_N^F(t_k)$ and
\begin{equation}
\label{eq:SigmaHTFN=0}
\SigmaHTFN
=
\left(
  \begin{array}{cc}
    0 & \begin{array}{ccc}
                      0 & \cdots & 0
                    \end{array}
     \\
    \begin{array}{c}
      0 \\
      \vdots \\
      0
    \end{array}
    & \mathbf{\Sigma}_{\mathrm{HT},k-1}
 \\
  \end{array}
\right),
\end{equation}
where $\mathbf{\Sigma}_{\mathrm{HT},k-1}$ is the matrix in~\eqref{eq:SigmaHTFN}
based on $0<F(t_2)<\cdots<F(t_k)<1$.
When $\mathbf{a}_{k-1}^t=(a_2,\ldots,a_k)\ne (0,\ldots,0)$, then
\[
\sigma_{\mathrm{HT}}^2
=
\mathbf{a}_k^t\SigmaHTF\mathbf{a}_k
=
\mathbf{a}_{k-1}^t\mathbf{\Sigma}_{\mathrm{HT},k-1}\mathbf{a}_{k-1}
>0,
\]
because $\mathbf{\Sigma}_{\mathrm{HT},k-1}$ is positive definite,
due to (HT4) and Lemma~\ref{lem:positive definite}.
This allows application of Lemma~\ref{lem:HT CLT} to~\eqref{eq:lincomb HT F}.
As in the previous cases, we conclude that~\eqref{eq:lincomb HT F}
converges in distribution to $a_1N_1+\cdots+a_kN_k$, where $(N_1,\ldots,N_k)$ has a
mean zero $k$-variate
normal distribution with covariance matrix $\SigmaHTF$ given by~\eqref{eq:SigmaHTF}.
When $\mathbf{a}_{k}^t=(a_1,0,\ldots,0)$, with $a_1\ne0$, then both~\eqref{eq:lincomb HT F}
and $a_1N_1+\cdots+a_kN_k$ are equal to zero.
According to the Cram\'er-Wold device, this proves the lemma for the case $F(t_k)=0$.

It remains to consider the case $F(t_k)=1$.
In this case, the $(k,k)$-th element of the matrix $\SigmaHTFN$
in~\eqref{eq:SigmaHTFN} is equal to $\mu_{\pi1}+\mu_{\pi2}$.
We distinguish between $\mu_{\pi1}+\mu_{\pi2}=0$ and $\mu_{\pi1}+\mu_{\pi2}>0$.
In the latter case, from the proof of Lemma~\ref{lem:positive definite} we find that
$\SigmaHTFN$ has determinant
\[
\mu_{\pi1}^{k-1}F(t_1)
\prod_{i=2}^k
(F(t_i)-F(t_{i-1}))
(\mu_{\pi1}+\mu_{\pi2})>0,
\]
using~(HT4) and $0<F(t_{1})<\cdots<F(t_{k-1})<F(t_k)=1$.
This allows application of Lemma~\ref{lem:HT CLT} to~\eqref{eq:lincomb HT F}.
As before, we conclude that~\eqref{eq:lincomb HT F} converges in distribution to $a_1N_1+\cdots+a_kN_k$, where $(N_1,\ldots,N_k)$ has a
$k$-variate mean zero
normal distribution with covariance matrix $\SigmaHTF$
from~\eqref{eq:SigmaHTF}.
According to the Cram\'er-Wold device, this proves the lemma for the case
$F(t_k)=1$ and $\mu_{\pi1}+\mu_{\pi2}>0$.

Next, consider the case $F(t_k)=1$ and $\mu_{\pi1}+\mu_{\pi2}=0$.
This means
\begin{equation}
\label{eq:SigmaHTF=1}
\SigmaHTFN
=
\left(
  \begin{array}{cc}
    \mathbf{\Sigma}_{\mathrm{HT},k-1} & \begin{array}{c}
      0 \\
      \vdots \\
      0
    \end{array}\\

    \begin{array}{ccc}0 & \cdots & 0\end{array}
    & 0
 \\
  \end{array}
\right),
\end{equation}
where $\mathbf{\Sigma}_{\mathrm{HT},k-1}$
is the matrix in~\eqref{eq:SigmaHTFN} corresponding to $0<F(t_1)<\cdots<F(t_{k-1})<1$.
When $\mathbf{a}_{k-1}^t=(a_1,\ldots,a_{k-1})\ne (0,\ldots,0)$, then
\[
\sigma_{\mathrm{HT}}^2
=
\mathbf{a}_k^t\SigmaHTFN\mathbf{a}_k
=
\mathbf{a}_{k-1}^t \mathbf{\Sigma}_{\mathrm{HT},k-1}\mathbf{a}_{k-1}
>0,
\]
because $ \mathbf{\Sigma}_{\mathrm{HT},k-1}$ is positive definite,
due to (HT4) and Lemma~\ref{lem:positive definite}.
This allows application of Lemma~\ref{lem:HT CLT} to~\eqref{eq:lincomb HT F}.
As in the previous cases, we conclude that~\eqref{eq:lincomb HT F}
converges in distribution to $a_1N_1+\cdots+a_kN_k$, where $(N_1,\ldots,N_k)$ has a $k$-variate
mean zero normal distribution with covariance matrix $\SigmaHTF$ given by~\eqref{eq:SigmaHTF}.
When $\mathbf{a}_{k}^t=(0,\ldots,0,a_k)$, with $a_k\ne0$, then
$a_1N_1+\cdots+a_kN_k=0$ and
\[
a_1\mathbb{X}_N^F(t_1)+\cdots+a_k\mathbb{X}_N^F(t_k)
=
a_k
\sqrt{n}
\left(
\frac{1}{N}
\sum_{i=1}^N
\frac{\xi_i}{\pi_i}
-
1\right).
\]
converges to zero in probability.
The latter follows from the fact that according to (HT1) and Lemma~\ref{lem:variance HT}, we have that
\begin{equation}
\label{eq:CLT Nhat F2}
\sqrt{n}
\left(
\frac{1}{N}
\sum_{i=1}^N
\frac{\xi_i}{\pi_i}
-
1\right)
\to
N(0,\mu_{\pi1}+\mu_{\pi2}),
\end{equation}
in distribution under $\mathbb{P}_{d,m}$.
According to the Cram\'er-Wold device, this proves the lemma for the case $F(t_k)=1$ and $\mu_{\pi1}+\mu_{\pi2}=0$.
Finally, the argument for the case that $F(t_1)=0$ and $F(t_k)=1$ simultaneously,
either with or without repeated among the $F(t_i)$'s,
is completely similar.
This finishes the proof.
\hfill
\tqed

\paragraph{Proof of Proposition~\ref{prop:FCLT HT F}}
The proof is similar to that of Theorem~\ref{th:FCLT HT F}.
Tightness is obtained in the same way and the convergence of finite dimensional projections
is provided by Lemma~\ref{lem:fidis HT F deterministic}.
The theorem now follows from Theorem~13.5 in~\cite{Billingsley_1999} for the case that the $Y_i$'s are uniformly distributed on~$[0,1]$.
Next, this is extended to $Y_i$'s with a general c.d.f.~$F$ in the same way as in the proof of Theorem~\ref{th:FCLT HT FN}.
\hfill\tqed

\paragraph{Proof of Proposition~\ref{prop:FCLT HJ FN}}
The proof is similar to that of Theorem~\ref{th:FCLT HJ FN}.
We find that the limit behavior of $\sqrt{n}(\FHJ-\FN)$ is the same as that of the process $\mathbb{Y}_N$ defined in~\eqref{def:Y_N}.
When we first consider the case of uniform $Y_i$'s with $F(t)=t$, tightness of the process $\mathbb{Y}_N$ follows in the same way as in the proof of
Theorem~\ref{th:FCLT HJ FN}.
It remains to establish weak convergence of the finite dimensional projections~\eqref{eq:fidis Y_N}.
This can be done in the same way as in the proof of Proposition~\ref{prop:FCLT HT FN}, but this time with
\[
V_{ik}=a_1\left(\mathds{1}_{\{Y_i\leq t_1\}}-t_1)+\cdots+a_k(\mathds{1}_{\{Y_i\leq t_k\}})-t_k\right).
\]
From~(HT1) and Lemma~\ref{lem:variance HT}(i) we conclude that~\eqref{eq:lincomb Y_N} converges in distribution to a mean zero normal random variable
with variance
\[
\sigma_{\mathrm{HT}}^2
=
\mu_{\pi1}
\mathbb{E}_m\left[V_{1k}^2\right]
=
\mathbf{a}_k^t\widetilde{\mathbf{\Sigma}}_k\mathbf{a}_k,
\]
where $\widetilde{\mathbf{\Sigma}}_k$ is the $k\times k$-matrix
with $(q,r)$-element equal to $\mu_{\pi_1}(t_q\wedge t_r-t_qt_r)$.
We conclude that~\eqref{eq:lincomb Y_N} converges in distribution to
$a_1N_1+\cdots+a_kN_k$, where $(N_1,\ldots,N_k)$ has a $k$-variate mean zero normal distribution
with covariance matrix $\widetilde{\mathbf{\Sigma}}_k$.
By means of the Cram\'er-Wold device this establishes the limit distribution
of~\eqref{eq:fidis Y_N}, which is the same as that of the vector
$(\mathbb{G}^{\mathrm{HJ}}(t_1),\ldots,\mathbb{G}^{\mathrm{HJ}}(t_k))$,
where $\mathbb{G}^{\mathrm{HJ}}$ is a mean zero Gaussian process with covariance function
\[
\mathbb{E}_{d,m}\mathbb{G}^{\mathrm{HJ}}(s)\mathbb{G}^{\mathrm{HJ}}(t)
=
\mu_{\pi1}
\left(
s\wedge t-st
\right).
\]
From here on, the proof is completely the same as that of Theorem~\ref{th:FCLT HJ FN}.
\hfill
\tqed

{\paragraph{Remainder of the proof of Theorem~\ref{th:FCLT Lumley}}
It remains to prove weak convergence of the finite dimensional projections
\begin{equation}
\label{eq:fidis lumley}
\big(\mathbb{G}^{\pi}_N(t_1),\ldots,\mathbb{G}^{\pi}_N(t_k)\big).
\end{equation}
To this end we apply the Cram\'er-Wold device and consider
linear combinations
\begin{equation}
\label{eq:lincomb Lumley}
a_1\GL(t_1)+\cdots+a_k\GL(t_k)
=
\frac{\sqrt{n}}{N}
\sum_{i=1}^N
\frac{\xi_i}{\pi_i}
V_{ik}.
\end{equation}
Convergence of~\eqref{eq:lincomb Lumley}, is obtained completely similar to that of~\eqref{eq:lincomb HT F} in Lemma~\ref{lem:fidis HT F}, but this time with
\[
V_{ik}=a_1\left(\mathds{1}_{\{Y_i\leq t_1\}}-t_1)+\cdots+a_k(\mathds{1}_{\{Y_i\leq t_k\}})-t_k\right),
\]
and $\mu_k=0$.
Using the fact that (HJ4) allows the use of Lemma~\ref{lem:HT CLT},
one can deduce that~\eqref{eq:lincomb Lumley} converges in distribution under~$\mathbb{P}_{d,m}$
to $a_1N_1+\cdots+a_kN_k$, where $(N_1,\ldots,N_k)$ has a $k$-variate
normal distribution with covariance matrix
$\mathbf{\Sigma}^{\mathrm{\pi}}
=
\mathbf{\Sigma}_k^{\mathrm{HJ}}
+
\lambda \mathbf{\Sigma}_F$,
where $\mathbf{\Sigma}_k^{\mathrm{HJ}}$ and $\mathbf{\Sigma}_F$ are given in~\eqref{eq:HJ2 alternative moment} and Lemma~\ref{lem:fidis HT F}, respectively.
By means of the Cram\'er-Wold device, this proves that~\eqref{eq:fidis lumley}
converges in distribution under $\mathbb{P}_{d,m}$ to a mean zero $k$-variate normal random vector with covariance matrix
$\mathbf{\Sigma}^{\mathrm{\pi}}$.
This distribution is the same as that of $\big(\mathbb{G}^{\pi}(t_1),\ldots,\mathbb{G}^{\pi}(t_k)\big)$,
where $\mathbb{G}^{\pi}$ is a mean zero Gaussian process with covariance function
\[
\begin{split}
\lim_{N\to\infty}
\frac{1}{N^2}
\sum_{i=1}^N\sum_{j=1}^N
\mathbb{E}_m
&\left[n
\frac{\pi_{ij}-\pi_i\pi_j}{\pi_i\pi_j}
\left(\mathds{1}_{\{Y_i\leq s\}}-s\right)
\left(\mathds{1}_{\{Y_i\leq t\}}-t\right)
\right]\\
&+
\lambda
\left(
s\wedge t-st
\right),
\quad
s,t\in \mathbb{R}.
\end{split}
\]
Since $\mathbb{G}^{\pi}$ is continuous at 1, the theorem then follows from
Theorem~13.5 in~\cite{Billingsley_1999} for the case of uniform $Y_i$'s.
Extension to $Y_i$'s with a general c.d.f.~$F$ is completely similar
to the proof of Theorem~\ref{th:FCLT HT FN}.}

{\paragraph{Remainder of the proof of Theorem~\ref{th:FCLT HJ FN}}
It remains to prove weak convergence of the finite dimensional projections
\begin{equation}
\label{eq:fidis Y_N}
\big(\mathbb{Y}_N(t_1),\ldots,\mathbb{Y}_N(t_k)\big).
\end{equation}
As before, we apply the Cram\'er-Wold device and consider
\begin{equation}
\label{eq:lincomb Y_N}
a_1\mathbb{Y}_N(t_1)+\cdots+a_k\mathbb{Y}_N(t_k)
=
\sqrt{n}
\left\{
\frac1N\sum_{i=1}^N
\frac{\xi_i}{\pi_i}V_{ik}
-
\frac1N\sum_{i=1}^N
V_{ik}
\right\},
\end{equation}
with
\[
V_{ik}=a_1\left(\mathds{1}_{\{Y_i\leq t_1\}}-t_1)+\cdots+a_k(\mathds{1}_{\{Y_i\leq t_k\}})-t_k\right).
\]
Convergence of~\eqref{eq:lincomb Y_N} is obtained completely similar to that of~\eqref{eq:fidis HT FN} in the proof of Lemma~\ref{lem:fidis HT FN}.
From (HT1) and (HJ2), it follows that~\eqref{eq:lincomb Y_N}
converges in distribution under~$\mathbb{P}_{d,m}$
to $a_1N_1+\cdots+a_kN_k$, where $(N_1,\ldots,N_k)$ has a $k$-variate
normal distribution with covariance matrix~$\mathbf{\Sigma}_k^{\mathrm{HJ}}$
given in~\eqref{eq:HJ2 alternative moment}.
By means of the Cram\'er-Wold device, this proves that~\eqref{eq:fidis Y_N}
converges in distribution under~$\mathbb{P}_{d,m}$ to a mean zero $k$-variate normal random vector with covariance matrix~$\mathbf{\Sigma}_k^{\mathrm{HJ}}$.
This distribution is the same as that of $\big(\mathbb{G}^{\mathrm{HJ}}(t_1),\ldots,\mathbb{G}^{\mathrm{HJ}}(t_k)\big)$,
where $\mathbb{G}^{\mathrm{HJ}}$ is a mean zero Gaussian process with covariance function
\[
\lim_{N\to\infty}
\frac{1}{N^2}
\sum_{i=1}^N\sum_{j=1}^N
\mathbb{E}_m
\left[n
\frac{\pi_{ij}-\pi_i\pi_j}{\pi_i\pi_j}
\left(\mathds{1}_{\{Y_i\leq s\}}-s\right)
\left(\mathds{1}_{\{Y_i\leq t\}}-t\right)
\right],
\]
for $s,t\in \mathbb{R}$.
As before, the theorem now follows from
Theorem~13.5 in~\cite{Billingsley_1999} for the case of uniform $Y_i$'s,
and is then extended to $Y_i$'s with a general c.d.f.~$F$.
\hfill\tqed}

{\paragraph{Proof of Theorem~\ref{th:FCLT HJ F}}
The theorem follows directly from relation~\eqref{eq:relation GL and GHJ} and Theorem~\ref{th:FCLT Lumley}.
\hfill\tqed}

\paragraph{Proof of Proposition~\ref{prop:FCLT HJ F}}
From relation~\eqref{eq:relation GL and GHJ} and Theorem~\ref{th:FCLT Lumley} we
know that the limit behavior of $\sqrt{n}(\FHJ-F)$ is the same as that of $\GL$.
Tightness of $\GL$ has been obtained in the proof of Theorem~\ref{th:FCLT Lumley}.
It remains to establish weak convergence of~\eqref{eq:fidis lumley}.
This can be done in the same way as in the proof of Lemma~\ref{lem:fidis HT F deterministic},
but this time with
\[
V_{ik}=a_1\left(\mathds{1}_{\{Y_i\leq t_1\}}-F(t_1)\right)+\cdots+a_k\left(\mathds{1}_{\{Y_i\leq t_k\}}-F(t_k)\right)
\]
and $\mu_k=0$.
When $0<F(t_1)<\cdots<F(t_k)<1$, from (HT1) and Lemma~\ref{lem:variance HT} we find that
$nS_N^2
\to
\mu_{\pi1}\mathbb{E}_{m}[V_{1k}^2]
=
\mathbf{a}_k^t\mathbf{\Sigma}_k\mathbf{a}_k$,
where
\begin{equation}
\label{eq:SigmaG}
\mathbf{\Sigma}_k
=
\mu_{\pi1}
\Big(
F(t_q\wedge t_r)-F(t_q)F(t_r)
\Big)_{q,r=1}^k.
\end{equation}
From condition~(i) of Proposition~\ref{prop:FCLT HT F} and Lemma~\ref{lem:positive definite},
it follows that $\mathbf{\Sigma}_k$ is positive definite, so that $\mathbf{a}_k^t\mathbf{\Sigma}_k\mathbf{a}_k>0$.
Hence, according to Lemma~\ref{lem:HT CLT}, the right hand side of~\eqref{eq:lincomb Lumley}
converges in distribution under $\mathbb{P}_{d,m}$ to a mean zero normal random variable with variance
$(\mu_{\pi1}+\lambda)\mathbb{E}_{m}[V_{1k}^2]=\mathbf{a}_k^t\SigmaHJF\mathbf{a}_k$,
where
\begin{equation}
\label{eq:SigmaHJF}
\begin{split}
\SigmaHJF
=
\Big(
(\mu_{\pi1}+\lambda)F(t_q\wedge t_r)
\Big)_{q,r=1}^k.
\end{split}
\end{equation}
We conclude that the right hand side of~\eqref{eq:lincomb Lumley} converges in distribution to $a_1N_1+\cdots+a_kN_k$, where $(N_1,\ldots,N_k)$ has a
mean zero $k$-variate
normal distribution with covariance matrix $\SigmaHJF$.
By means of the Cram\'er-Wold device, this proves weak convergence
of $\big(\mathbb{G}^{\pi}_N(t_1),\ldots,\mathbb{G}^{\pi}_N(t_k)\big)$ for the case that $0<F(t_1)<\cdots<F(t_k)<1$.
As in the proof of Lemma~\ref{lem:fidis HT F deterministic}, the case where the $F(t_i)$'s are not all distinct, but satisfy $0<F(t_i)<1$,
the case $F(t_1)=0$, and the case $F(t_k)=1$, can be reduced to the previous case.
From here on, the proof is completely the same as that of Theorem~\ref{th:FCLT Lumley}.
\hfill
\tqed

{\paragraph{Proof of Proposition~\ref{prop:HT1}}
The proposition only needs to be established for the rejective sampling design, as it can be extended to high entropy designs
by means of Theorem~5 in~\cite{berger1998b}.
Since the rejective sampling design can be represented as a Poisson sampling design conditionally on the sample size being equal to $n$,
the proof is along the lines of the arguments used in the proof of Theorem~3.2 in~\cite{Bertail_2013}.
It applies results from~\cite{mirakhmedov2014} on a central limit theorem for sums of functions of independent random variables
$\xi_1,\ldots,\xi_N$, conditional on $\xi_1+\cdots+\xi_N=n$.
Details are provided in the supplement B in~\cite{Boistard_2015}.
\hfill\tqed}

{\paragraph{Proof of Corollary~\ref{cor:high entropy deterministic}}
As in the proof of Corollary~\ref{cor:high entropy}, we first prove the results for rejective sampling and then extend them to high entropy designs.
Completely similar to the proof of Corollary~\ref{cor:high entropy}, conditions (A2)-(A4) imply (C2)-(C4).
Furthermore, condition~(ii) of Proposition~\ref{prop:FCLT HT FN} is obtained in the same way as in the proof of Corollary~\ref{cor:high entropy}, with $\mu_{\pi2}=-\alpha$,
from conditions (A2)-(A3) and (A5).
This proves parts~(i)-(iv).
\hfill\tqed}

\paragraph{Proof of Corollary~\ref{cor:poverty rate HT}}
The mapping $\phi:\mathbb{D}_{\phi}\subset D(\mathbb{R})\mapsto \mathbb{R}$ is Hadamard-differentiable
at $F$ tangentially to the set $\mathbb{D}_0$ consisting
of functions $h\in D(\mathbb{R})$ that are continuous at $F^{-1}(\alpha)$.
According to Theorem~\ref{th:FCLT HT F}, the sequence
$\sqrt{n}(\FHT-F)$ converges weakly to a mean zero Gaussian process
$\mathbb{G}_F^{\mathrm{HT}}$ with covariance structure
\begin{equation}
\label{eq:cov GHT poverty}
\mathbb{E}_{d,m}\mathbb{G}_F^{\mathrm{HT}}(s)\mathbb{G}_F^{\mathrm{HT}}(t)
=
(\mu_{\pi1}+\lambda)F(s\wedge t)+(\mu_{\pi2}-\lambda)F(s)F(t),
\end{equation}
for $s,t\in \mathbb{R}$.
It then follows from Theorem~3.9.4 in~\cite{van_1996},
that the random variable $\sqrt{n}(\phi(\FHT)-\phi(F))$
converges weakly to
\[
-\beta\frac{f(\beta F^{-1}(\alpha))}{f(F^{-1}(\alpha))}
\mathbb{G}_F^{\mathrm{HT}}(F^{-1}(\alpha))
+
\mathbb{G}_F^{\mathrm{HT}}(\beta F^{-1}(\alpha)),
\]
which has a normal distribution with mean zero and variance
\[
\begin{split}
\sigma_{\mathrm{HT},\alpha,\beta}^2
&=
\beta^2\frac{f(\beta F^{-1}(\alpha))^2}{f(F^{-1}(\alpha))^2}
\mathbb{E}\left[\mathbb{G}^{\mathrm{HT}}_F(F^{-1}(\alpha))^2\right]\\
&\qquad+
\mathbb{E}\left[\mathbb{G}^{\mathrm{HT}}_F(\beta F^{-1}(\alpha))^2\right]\\
&\qquad
-2\beta\frac{f(\beta F^{-1}(\alpha))}{f(F^{-1}(\alpha))}
\mathbb{E}\left[\mathbb{G}^{\mathrm{HT}}_F(F^{-1}(\alpha))\mathbb{G}^{\mathrm{HT}}_F(\beta F^{-1}(\alpha))\right].
\end{split}
\]
The precise expression can then be derived from~\eqref{eq:cov GHT poverty},
which proves part one.
For part two, write
\[
\sqrt{n}\left(\phi(\FHT)-\phi(\FN)\right)
=
\sqrt{n}\left(\phi(\FHT)-\phi(F)\right)
+
\frac{\sqrt{n}}{\sqrt{N}}
\sqrt{N}\left(\phi(\FN)-\phi(F)\right).
\]
The process $\sqrt{N}(\FN-F)$ converges weakly to a mean zero Gaussian process~$\mathbb{G}_F$.
Then, Hadamard-differentiability of $\phi$ together with Theorem~3.9.4 in~\cite{van_1996}
yields that the sequence $\sqrt{N}(\phi(\FN)-\phi(F))$ converges weakly to $\phi_F'(\mathbb{G}_F)$.
As $n/N\to0$, the theorem follows from part one.
\hfill
\tqed

\paragraph{Proof of Corollary~\ref{cor:poverty rate HJ}}
The proof is completely the same as that of Corollary~\ref{cor:poverty rate HT},
with the only difference that the covariance structure of the limiting process
$\sqrt{n}(\phi(\FHJ)-\phi(F))$ is now given in Theorem~\ref{th:FCLT HJ F}.
\hfill
\tqed

\section{Additional technicalities}
\paragraph{Comment about (C1) on page~\pageref{page:C1}}
Condition A3 in~\cite{conti2015} requires that
\begin{equation}
\label{A3}
\lim_{N,n\to\infty}
\mathbb{E}[\pi_i(1-\pi_i)]=d>0,
\end{equation}
where $0<d\leq 1/4$.
The parabola $x\mapsto x(1-x)-d$ is strictly positive for
\[
0<\frac{1-\sqrt{1-4d}}{2}<x<\frac{1+\sqrt{1-4d}}{2}<1.
\]
According to condition A4 in~\cite{conti2015}, it holds that $n/N\to\lambda>0$.
Suppose that the lower bound in (C1) does not hold, so that $N\pi_i/n$ can be arbitrarily small, say
\[
\frac{N\pi_i}n< \frac{1-\sqrt{1-4d}}{4\lambda}.
\]
In that case
\[
\lim_{N\to\infty}\pi_i
=
\lim_{N\to\infty}
\frac{n}{N}\cdot \frac{N\pi_i}n
<
\lambda\cdot\frac{1-\sqrt{1-4d}}{4\lambda}
=
\frac{1-\sqrt{1-4d}}{4},
\]
which lies left of the smallest zero of the parabola $x(1-x)-d$.
As a consequence
\[
\lim_{N,n\to\infty}
\mathbb{E}[\pi_i(1-\pi_i)]<d,
\]
which is in contradiction with~\eqref{A3}.
\hfill\tqed

\begin{lemma}
\label{lem:variance HT}
Let $S_N^2$ be defined by~\eqref{def:variance HT}, where $V_1,V_2,\ldots$ is a sequence of i.i.d.~ random variables on $(\Omega,\mathfrak{F},\mathbb{P}_{m})$ with $\mathbb{E}_{m}[V_1^4]<\infty$.
Suppose that $n$ and $\pi_i,\pi_{ij}$, for $i,j=1,2,\ldots,N$ are deterministic and let $\mathbb{V}_m(S_N^2)$ denote the variance of $S_N^2$.
If (C1)-(C2) hold, then $n^2\mathbb{V}_m[S_N^2]=O(1/N)$.
Then,
\begin{itemize}
\item[(i)]
if $\mathbb{E}_m[V_1]=0$ and condition (i) in Proposition~\ref{prop:FCLT HT FN} holds,
\[
nS_N^2\to\sigma_{\mathrm{HT}}^2=\mu_{\pi1}\mathbb{E}_{m}[V_1^2],
\quad\text{in $\mathbb{P}_{m}$-probability.}
\]
\item[(ii)]
if $\mathbb{E}_m[V_1]\ne0$ and conditions (i)-(ii) in Proposition~\ref{prop:FCLT HT FN} hold,
\[
nS_N^2\to\sigma_{\mathrm{HT}}^2
=
\mu_{\pi1}\mathbb{E}_{m}[V_1^2]+\mu_{\pi2}\left(\mathbb{E}_{m}[V_1]\right)^2,
\quad\text{in $\mathbb{P}_{m}$-probability.}
\]
\end{itemize}
\end{lemma}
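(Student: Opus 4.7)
The plan is to establish $L^2$ convergence of $nS_N^2$ to the stated limit, which is stronger than convergence in probability. For this I need (a) to compute $\mathbb{E}_m[nS_N^2]$ and identify its limit under the given hypotheses, and (b) to control $\mathbb{V}_m[nS_N^2]$ and show it tends to zero at rate $1/N$. I will treat (b) first since it is stated as a separate conclusion of the lemma and since it is the main technical obstacle.

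For the expectation, writing $c_{ij}=n(\pi_{ij}-\pi_i\pi_j)/(\pi_i\pi_j)$ and splitting the double sum in $nS_N^2$ into its diagonal part ($i=j$, with $\pi_{ii}=\pi_i$, so $c_{ii}=n(\pi_i^{-1}-1)$) and its off-diagonal part, the i.i.d.\ assumption gives
\[
\mathbb{E}_m[nS_N^2]
=
\mathbb{E}_m[V_1^2]\cdot\frac{n}{N^2}\sum_{i=1}^N\!\!\left(\frac{1}{\pi_i}-1\right)
+\mu_V^2\cdot\frac{n}{N^2}\!\ssum_{i\ne j}\frac{\pi_{ij}-\pi_i\pi_j}{\pi_i\pi_j}.
\]
In case (i), $\mu_V=0$ so the second term drops and only assumption (i) of Proposition~\ref{prop:FCLT HT FN} is needed; in case (ii), both terms contribute and assumptions (i)-(ii) together yield the stated limit $\mu_{\pi1}\mathbb{E}_m[V_1^2]+\mu_{\pi2}\mu_V^2$.

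The main obstacle is the variance bound $n^2\mathbb{V}_m[S_N^2]=O(1/N)$. Expanding,
\[
\mathbb{V}_m[nS_N^2]
=\frac{1}{N^4}\sum_{i,j,k,l}c_{ij}c_{kl}\,\mathrm{Cov}_m(V_iV_j,V_kV_l),
\]
and by i.i.d., the covariance vanishes whenever $\{i,j\}\cap\{k,l\}=\emptyset$. I will classify the remaining quadruples by their coincidence pattern (all four indices equal; exactly one coincidence between a "left" and a "right" index; two coincidences; a coincidence inside $\{i,j\}$ or inside $\{k,l\}$, etc.), bound each covariance by a constant depending only on $\mathbb{E}_m[V_1^4]$, and count the quadruples in each class. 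The key coefficient estimates are that (C1) implies $|c_{ii}|\le n(K_1n/N)^{-1}=N/K_1=O(N)$, while (C1)-(C2) give $|c_{ij}|=O(1)$ for $i\ne j$. Combining these with the counting gives each class a contribution of order $N^3$ to the unnormalised sum—dominant classes being those with one diagonal index among $\{i,j\}$ or $\{k,l\}$ (size $O(N^3)$, coefficient product $O(N\cdot 1)$) and purely off-diagonal singletons (size $O(N^3)$, coefficient product $O(1)$, but these contribute only via $\mathrm{Cov}$ factors proportional to $\mu_V^2\sigma_V^2$, still $O(1)$). Dividing by $N^4$ yields the claim $O(1/N)$.

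With both ingredients in hand, Chebyshev's inequality (equivalently, $L^2$ convergence) finishes the proof in parts (i) and (ii). The only subtlety to watch is keeping the constants uniform in $N$, which is ensured by (C1)-(C2) and the assumed finiteness of $\mathbb{E}_m[V_1^4]$; the tedious bookkeeping of the coincidence-pattern decomposition is the step I would relegate to the supplement.
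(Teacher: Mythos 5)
Your argument is correct in substance and reaches the same two ingredients as the paper (the diagonal/off-diagonal computation of $\mathbb{E}_m[nS_N^2]$, a variance bound of order $1/N$, and Chebyshev/Markov), but your route to the variance bound is genuinely different and, I would say, cleaner. The paper computes $N^4(\mathbb{E}_m[S_N^2])^2$ and $N^4\mathbb{E}_m[S_N^4]$ separately as explicit polynomials in the moments of $V_1$ with coefficients $a_1,a_2,a_3$ and $b_1,\dots,b_4$, and then verifies term by term that $a_i-b_i=O(N^3/n^2)$; the leading sums over $D_{4,N}$ and $D_{3,N}$ cancel exactly between the two expressions, and only the remainders survive. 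You instead expand $\mathbb{V}_m[nS_N^2]=N^{-4}\sum c_{ij}c_{kl}\,\mathrm{Cov}_m(V_iV_j,V_kV_l)$ directly, so that the cancellation is automatic: independence kills every quadruple with $\{i,j\}\cap\{k,l\}=\emptyset$, and what remains is a counting argument with the coefficient bounds $|c_{ii}|=O(N)$ from (C1) and $|c_{ij}|=O(1)$ from (C1)--(C2). This avoids having to track the exact coefficients of $(\mathbb{E}_m[V_1])^4$, $\mathbb{E}_m[V_1^2](\mathbb{E}_m[V_1])^2$, etc., in two separate fourth-moment expansions. One bookkeeping slip to fix before relegating the details to a supplement: for the class where one pair is diagonal (say $i=j$), a nonzero covariance forces $k=i$ or $l=i$ in addition, so that class has $O(N^2)$ quadruples, not $O(N^3)$ as your parenthetical states; with size $O(N^2)$ and coefficient product $O(N)\cdot O(1)$ the contribution is $O(N^3)$ as you claim, whereas your stated numbers would multiply to $O(N^4)$ and break the bound. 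With that count corrected (and similarly $O(N)$ quadruples with coefficient product $O(N^2)$ for $i=j=k=l$), every surviving class contributes $O(N^3)$ to the unnormalised sum and the conclusion $n^2\mathbb{V}_m[S_N^2]=O(1/N)$ follows.
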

\begin{proof}
For any $\epsilon >0$, by Markov inequality we have
\begin{equation}
\label{eq:markov}
\mathbb{P}_m\left\{ |nS_N^2 -\mathbb{E}_m[nS_N^2]| >\epsilon \right\}
<
\frac{n^2\mathbb{V}_m[S_N^2]}{\epsilon^2},
\end{equation}
where $\mathbb{V}_m$ denotes the variance of $S_N^2$ under the super-population model.
In order to compute $\mathbb{V}_m[S_N^2]$, we first have
\begin{equation}
\label{eq:ES}
\begin{split}
\mathbb{E}_m[S_N^2]
&=
\frac{1}{N^2}
\sum_{i=1}^N\sum_{j=1}^N
\frac{\pi_{ij}-\pi_i\pi_j}{\pi_i\pi_j}
\mathbb{E}_m(V_iV_j)\\
&=
\frac{\mathbb{E}_m[V_1^2]}{N^2}
\sum_{i=1}^N
\frac{1-\pi_i}{\pi_i}
+
\frac{\left(\mathbb{E}_{m}[V_1]\right)^2}{N^2}
\ssum_{i\ne j}
\frac{\pi_{ij}-\pi_i\pi_j}{\pi_i\pi_j}.
\end{split}
\end{equation}
From this, tedious but straightforward calculus leads to the expression for $(\mathbb{E}_m[S_N^2])^2$
and $\mathbb{E}_m[S_N^4]$.
One finds
\[
N^4\left(\mathbb{E}_m\left[S_N^2\right]\right)^2
=
a_1\left(\mathbb{E}_m[V_1]\right)^4
+
a_2\mathbb{E}_m\left[V_1^2\right]\left(\mathbb{E}_m\left[V_1\right]\right)^2
+
a_3\left(\mathbb{E}_m\left[V_1^2\right]\right)^2,
\]
where, according to (C1)-(C2):
\[
\begin{split}
a_1
&=
\ssssum_{(i,j,k,l)\in D_{4,N}}\frac{\pi_{ij}-\pi_i\pi_j}{\pi_i\pi_j}\frac{\pi_{kl}-\pi_k\pi_l}{\pi_k\pi_l}\\
&\qquad+
4\sssum_{(i,j,l)\in D_{3,N}}\frac{\pi_{ij}-\pi_i\pi_j}{\pi_i\pi_j}\frac{\pi_{il}-\pi_i\pi_l}{\pi_i\pi_l}
+
2\ssum_{(i,j)\in D_{2,N}}\left(\frac{\pi_{ij}-\pi_i\pi_j}{\pi_i\pi_j}\right)^2\\
&=
\ssssum_{(i,j,k,l)\in D_{4,N}}\frac{\pi_{ij}-\pi_i\pi_j}{\pi_i\pi_j}\frac{\pi_{kl}-\pi_k\pi_l}{\pi_k\pi_l}
+
O(N^3/n^2)
+
O(N^2/n^2)\\
a_2
&=
2\sssum_{(i,k,l)\in D_{3,N}}\frac{1-\pi_i}{\pi_i}\frac{\pi_{kl}-\pi_k\pi_l}{\pi_k\pi_l}
+
4\ssum_{(i,k)\in D_{2,N}}\frac{1-\pi_i}{\pi_i}\frac{\pi_{ik}-\pi_i\pi_k}{\pi_i\pi_k}\\
&=
2\sssum_{(i,k,l)\in D_{3,N}}\frac{1-\pi_i}{\pi_i}\frac{\pi_{kl}-\pi_k\pi_l}{\pi_k\pi_l}
+
O(N^3/n^2)\\
a_3
&=
\ssum_{(i,j)\in D_{2,N}}\frac{1-\pi_i}{\pi_i}\frac{1-\pi_j}{\pi_j}
+
\sum_{i=1}^N\left(\frac{1-\pi_i}{\pi_i}\right)^2\\
&=
\ssum_{(i,j)\in D_{2,N}}\frac{1-\pi_i}{\pi_i}\frac{1-\pi_j}{\pi_j}
+
O(N^3/n^2).
\end{split}
\]
Furthermore,
\[
\begin{split}
N^4\mathbb{E}_m\left[S_N^4\right]
=
b_1\left(\mathbb{E}_m[V_1]\right)^4
&+
b_2\mathbb{E}_m\left[V_1^2\right]\left(\mathbb{E}_m[V_1]\right)^2\\
&+
b_3\left(\mathbb{E}_m\left[V_1^2\right]\right)^2
+
b_4
\mathbb{E}_m[V_1]\mathbb{E}_m\left[V_1^3\right]
\end{split}
\]
where
\[
\begin{split}
b_1
&=
\ssssum_{(i,j,k,l)\in D_{4,N}}\frac{\pi_{ij}-\pi_i\pi_j}{\pi_i\pi_j}\frac{\pi_{kl}-\pi_k\pi_l}{\pi_k\pi_l}
+
\sum_{i=1}^N\left(\frac{1-\pi_i}{\pi_i}\right)^2\\
&=
\ssssum_{(i,j,k,l)\in D_{4,N}}\frac{\pi_{ij}-\pi_i\pi_j}{\pi_i\pi_j}\frac{\pi_{kl}-\pi_k\pi_l}{\pi_k\pi_l}
+
O(N^3/n^2)\\
b_2
&=
2\sssum_{(i,k,l)\in D_{3,N}}\frac{1-\pi_i}{\pi_i}\frac{\pi_{kl}-\pi_k\pi_l}{\pi_k\pi_l}
+
4\sssum_{(i,j,l)\in D_{3,N}}\frac{\pi_{ij}-\pi_i\pi_j}{\pi_i\pi_j}\frac{\pi_{il}-\pi_i\pi_l}{\pi_i\pi_l}\\
&=
2\sssum_{(i,k,l)\in D_{3,N}}\frac{1-\pi_i}{\pi_i}\frac{\pi_{kl}-\pi_k\pi_l}{\pi_k\pi_l}
+
O(N^3/n^2)
\end{split}
\]
\[
\begin{split}
b_3
&=
\ssum_{(i,k)\in D_{2,N}}\frac{1-\pi_i}{\pi_i}\frac{1-\pi_k}{\pi_k}
+
2\ssum_{(i,j)\in D_{2,N}}\left(\frac{\pi_{ij}-\pi_i\pi_j}{\pi_i\pi_j}\right)^2\\
&=
\ssum_{(i,k)\in D_{2,N}}\frac{1-\pi_i}{\pi_i}\frac{1-\pi_k}{\pi_k}
+
O(N^2/n^2)\\
b_4
&=
4\ssum_{(i,j)\in D_{2,N}}\frac{\pi_{ij}-\pi_i\pi_j}{\pi_i\pi_j}\frac{1-\pi_j}{\pi_j}
=
O(N^3/n^2).
\end{split}
\]
The variance expression for $S_N^2$ is deduced easily from the previous computations.
From the expression derived in~\cite{Boistard_2015}, we find that $a_i-b_i=O(N^3/n^2)$, for $i=1,2,3$,
and $b_4=O(N^3/n^2)$, so that
\begin{equation}
\label{eq:variance HT}
n^2\mathbb{V}_m[S_N^2]
=
n^2\mathbb{E}_m[S_N^4]-n^2\left(\mathbb{E}_m[S_N^2]\right)^2
=
O(1/N).
\end{equation}
From~\eqref{eq:markov} we conclude that
$nS_N^2 -\mathbb{E}_m[nS_N^2]$ tends to zero in $\mathbb{P}_m$-probability.
As a consequence, statements (i) and (ii) follow from~\eqref{eq:ES}.
\end{proof}

{
\paragraph{Proof of Lemma~\ref{lem:variance HT} under (C2$^*$)}
We used (C2) to bound remainder terms in the coefficients $a_i$ and $b_i$,
but this can also be achieved with~(C2$^*$).
For the second term in $a_1$ we get
\[
\begin{split}
\left|
\sssum_{(i,j,l)\in D_{3,N}}\frac{\pi_{ij}-\pi_i\pi_j}{\pi_i\pi_j}\frac{\pi_{il}-\pi_i\pi_l}{\pi_i\pi_l}
\right|
&\leq
\ssum_{(i,j)\in D_{2,N}}
\left|
\frac{\pi_{ij}-\pi_i\pi_j}{\pi_i\pi_j}
\right|
\cdot
\sum_{l\ne i,j}
\left|
\frac{\pi_{il}-\pi_i\pi_l}{\pi_i\pi_l}
\right|\\
&=
N\cdot O\left(\frac{N}{n}\right)\cdot O\left(\frac{N}{n}\right)
=
O\left(\frac{N^3}{n^2}\right),
\end{split}
\]
by means of~(C2$^*$).
For the third term in $a_1$, we have
\[
\begin{split}
\ssum_{(i,j)\in D_{2,N}}\left(\frac{\pi_{ij}-\pi_i\pi_j}{\pi_i\pi_j}\right)^2
&\leq
\ssum_{(i,j)\in D_{2,N}}\frac{\left|\pi_{ij}-\pi_i\pi_j\right|}{\pi_i\pi_j}\cdot\frac{\pi_{ij}}{\pi_i\pi_j}\\
&=
N\cdot O\left(\frac{N}{n}\right)\cdot O\left(\frac{N}{n}\right)
=O\left(\frac{N^3}{n^2}\right),
\end{split}
\]
by means of~(C2$^*$) and (C1) and the fact that $\pi_{ij}\leq\pi_i$.
For the second term in $a_2$ we have
\[
\begin{split}
\left|
\ssum_{(i,k)\in D_{2,N}}\frac{1-\pi_i}{\pi_i}\frac{\pi_{ik}-\pi_i\pi_k}{\pi_i\pi_k}
\right|
&\leq
\sum_{i=1}^N
\left(
\frac{1}{\pi_i}-1
\right)
\sum_{k\ne i}
\frac{|\pi_{ik}-\pi_i\pi_k|}{\pi_i\pi_k}\\
&=
O\left(\frac{N^2}{n}\right)\cdot O\left(\frac{N}{n}\right)=O\left(\frac{N^3}{n^2}\right),
\end{split}
\]
by means of condition~(i) and~(C2$^*$).
For the remainder terms in $b_2,b_3,b_4$ we obtain bounds for the same quantities, as the previous three.
The rest of the proof of Lemma~\ref{lem:variance HT} remains the same.
\hfill\tqed
}
\begin{lemma}
\label{lem:tightness sum}
If $x_N\rightsquigarrow x$ and $y_N\rightsquigarrow y$ in $D[0,1]$ with the Skorohod metric,
and $x,y\in C[0,1]$, then the sequence $\{x_N+y_N\}$ is also tight in $D[0,1]$.
\end{lemma}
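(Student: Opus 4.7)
}
The plan is to verify tightness of $\{x_N+y_N\}$ via Prokhorov's theorem, by showing that every subsequence admits a further weakly convergent sub-subsequence. First, since $x_N\rightsquigarrow x$ and $y_N\rightsquigarrow y$ separately in $D[0,1]$, Prokhorov's theorem yields that each of $\{x_N\}$ and $\{y_N\}$ is tight. Consequently, the joint sequence $\{(x_N,y_N)\}$ is tight in $D[0,1]\times D[0,1]$ equipped with the product Skorohod topology, since a Cartesian product of tight families is tight.

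Next, given any subsequence of $\{(x_N,y_N)\}$, Prokhorov's theorem supplies a further subsequence $(x_{N_k},y_{N_k})$ converging weakly to some random element $(X,Y)$ in $D[0,1]\times D[0,1]$. The marginals of $(X,Y)$ must coincide in distribution with $x$ and $y$, respectively, so by hypothesis $X,Y\in C[0,1]$ almost surely. The crucial observation is that the addition map
\[
+:D[0,1]\times D[0,1]\to D[0,1],\qquad (f,g)\mapsto f+g,
\]
is continuous at every point $(f,g)$ with $f,g\in C[0,1]$. Indeed, Skorohod convergence to a continuous limit is equivalent to uniform convergence: if $(f_n,g_n)\to(f,g)$ in the product Skorohod topology with $f,g\in C[0,1]$, then $\|f_n-f\|_\infty\to 0$ and $\|g_n-g\|_\infty\to 0$, hence $\|(f_n+g_n)-(f+g)\|_\infty\to 0$, which implies $f_n+g_n\to f+g$ in the Skorohod metric.

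By the continuous mapping theorem applied to the set $\{(f,g)\in D[0,1]^2:f,g\in C[0,1]\}$, which is contained in the continuity set of $+$ and has full measure under $(X,Y)$, we conclude that $x_{N_k}+y_{N_k}\rightsquigarrow X+Y$ along the sub-subsequence. Since every subsequence of $\{x_N+y_N\}$ admits a weakly convergent sub-subsequence, the family is relatively compact in $D[0,1]$ and therefore tight by Prokhorov's theorem.

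The main technical point is the continuity of addition at pairs of continuous functions; the rest is an application of Prokhorov's theorem and a standard subsequence argument. Notice that continuity of addition on all of $D[0,1]\times D[0,1]$ fails (two jumps at nearby but distinct locations may not cancel under the Skorohod metric), which is why the continuity of the limits $x$ and $y$ is essential.
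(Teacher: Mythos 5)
Your proof is correct, but it takes a genuinely different route from the paper. The paper verifies the tightness criterion of Theorem~13.2 in Billingsley directly: it bounds $\sup_t|x_N(t)+y_N(t)|$ by the two suprema, controls the c\`adl\`ag modulus via $w'_{x_N+y_N}(\delta)\leq w_{x_N+y_N}(2\delta)\leq$ (sum of the two ordinary moduli), and then transfers the modulus bound from $x_N$ to the continuous limit $x$ by invoking the almost sure representation theorem together with the fact that Skorohod convergence to a continuous limit is uniform, finishing with tightness of a single random element of $C[0,1]$. You instead run a soft compactness argument: individual tightness (Prokhorov), joint tightness of $(x_N,y_N)$ in the product space, extraction of a weakly convergent sub-subsequence whose limit lies a.s.\ in $C[0,1]\times C[0,1]$, continuity of addition at pairs of continuous functions, the continuous mapping theorem, and the converse direction of Prokhorov. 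Both arguments ultimately rest on the same key fact --- Skorohod convergence to a continuous limit is uniform, and you correctly flag that addition is \emph{not} continuous on all of $D[0,1]^2$, which is exactly why continuity of the limits is essential. Your version is shorter, avoids explicit modulus-of-continuity estimates and the a.s.\ representation theorem, extends immediately to any finite number of summands or to other operations continuous on $C\times C$, and in fact delivers weak convergence along sub-subsequences rather than mere tightness; the paper's version is more hands-on and quantitative, exhibiting the explicit modulus bounds that the Billingsley machinery asks for.
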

\begin{proof}
We can use Theorem~13.2 from~\cite{Billingsley_1999}.
The first condition follows easily since
\[
\sup_{t\in[0,1]}
|x_N(t)+y_N(t)|
\leq
\sup_{t\in[0,1]}|x_N(t)|
+
\sup_{t\in[0,1]}
|y_N(t)|.
\]
Because $x_N\rightsquigarrow x$ and $y_N\rightsquigarrow y$ both sequences $\{x_N\}$ and $\{y_N\}$ are tight,
so that they satisfy the first condition of Theorem~13.2 individually.
For condition (ii) of Theorem~13.2 in~\cite{Billingsley_1999}, choose $\epsilon>0$.
According to (12.7) in~\cite{Billingsley_1999}, for any $0<\delta<1/2$,
\[
w'_x(\delta)\leq w_x(2\delta).
\]
This means that
\[
\begin{split}
\mathbb{P}
\left\{
w_{x_N+y_N}'(\delta)\geq \epsilon
\right\}
&\leq
\mathbb{P}
\left\{
w_{x_N+y_N}(2\delta)\geq \epsilon
\right\}\\
&\leq
\mathbb{P}
\left\{
w_{x_N}(2\delta)\geq \epsilon/2
\right\}
+
\mathbb{P}
\left\{
w_{y_N}(2\delta)\geq \epsilon/2
\right\}.
\end{split}
\]
Consider the first probability.
Since $x_N\rightsquigarrow x$ in $D[0,1]$ with the Skorohod metric,
according to the almost sure representation theorem (see, e.g., Theorem~11.7.2 in~\cite{dudley2002}), there exist
$\widetilde{x}_n$ and~$\widetilde{x}$,
having the same distribution as $x_N$ and $x$, respectively,
such that
$\widetilde{x}_N\to \widetilde{x}$,
with probability one,
in the Skorohod metric.
Because $\widetilde{x}\stackrel{d}{=} x$ and $x\in C[0,1]$, also $\widetilde{x}\in C[0,1]$.
Hence, since $\widetilde{x}$ is continuous, it follows that
\begin{equation}
\label{eq:conv as tilde}
\sup_{t\in[0,1]}
|\widetilde{x}_N(t)-\widetilde{x}(t)|
\to
0,
\qquad
\text{with probability one.}
\end{equation}
We then find that
\[
\begin{split}
&
\mathbb{P}
\left\{
w_{x_N}(2\delta)\geq \epsilon/2
\right\}
=
\mathbb{P}
\left\{
\sup_{|s-t|<2\delta}|x_N(s)-x_N(t)|\geq \epsilon/2
\right\}\\
&=
\mathbb{P}
\left\{
\sup_{|s-t|<2\delta}|\widetilde{x}_N(s)-\widetilde{x}_N(t)|\geq \epsilon/2
\right\}\\
&\leq
\mathbb{P}
\left\{
\sup_{|s-t|<2\delta}|\widetilde{x}(s)-\widetilde{x}(t)|\geq \epsilon/4
\right\}\\
&\quad+
\mathbb{P}
\left\{
\sup_{s\in[0,1]}|\widetilde{x}_N(s)-\widetilde{x}(s)|\geq \epsilon/8
\right\}
+
\mathbb{P}
\left\{
\sup_{t\in[0,1]}|\widetilde{x}_N(t)-\widetilde{x}(t)|\geq \epsilon/8
\right\}.
\end{split}
\]
The latter two probabilities tend to zero due to to~\eqref{eq:conv as tilde}.
For the first probability on the right hand side, note that $C[0,1]$ is separable and complete.
This means that each random element in $C[0,1]$ is tight.
Hence, $\widetilde{x}\in C[0,1]$ is tight, so that according to Theorem 7.3 in~\cite{Billingsley_1999},
there exists a $0<\delta<1/2$, such that
\[
\mathbb{P}
\left\{
\sup_{|s-t|<2\delta}|x(s)-x(t)|\geq \epsilon/4
\right\}
=
\mathbb{P}
\left\{
w_{x}(2\delta)\geq \epsilon/4
\right\}
\leq
\eta.
\]
We conclude that
$\mathbb{P}
\left\{
w_{x_N}(2\delta)\geq \epsilon/2
\right\}
\to
0$,
and the same result for $y_N$ can be obtained similarly.
This proves the lemma.
\end{proof}

{
\paragraph{Proof of Proposition~\ref{prop:HT1}}
It suffices to prove (HT1) for rejective sampling.
The proof is along the lines of the proof of Theorem~3.2 in~\cite{Bertail_2013} and uses results from~\cite{mirakhmedov2014}.
To adapt to the notation used in~\cite{mirakhmedov2014}, we will show that
\begin{equation}
\label{eq:HT1mirakhmedov}
\frac1{S_N}
\left(
\frac{1}{N}
\sum_{i=1}^N
\frac{\eta_iV_i}{\pi_i}
-
\frac{1}{N}
\sum_{i=1}^N
V_i
\right)
%\stackrel{d}{\to}
\to
N(0,1),
\qquad
\omega-\text{a.s.},
\end{equation}
in distribution under $\mathbb{P}_d$,
where
\[
S_N^2
=
\text{Var}_d\left[\frac1N\sum_{i=1}^N \left(\frac{\eta_i}{\pi_i}-1\right)V_i\right]
=
\frac{1}{N^2}
\sum_{i=1}^N\sum_{j=1}^N
\frac{\pi_{ij}-\pi_i\pi_j}{\pi_i\pi_j}V_iV_j.
\]
Here, the $\eta_1,\ldots,\eta_N$ represent the inclusion indicators corresponding to the rejective sampling design.
The rejective sampling design can be represented by a Poisson design conditional on the sample size being equal to~$n$
(e.g., see~\cite{Hajek_1964})
Let $\xi_1,\ldots,\xi_N$ denote the indicators of the corresponding Poisson design.
Note that $\mathbb{E}_d[\eta_i]=\pi_i$ and $\mathbb{E}_d[\xi_i]=p_i$, where the $p_i$'s can be chosen such that~$\sum_{i=1}^Np_i=n$,
and that $d_N=\sum_{i=1}^N\pi_i(1-\pi_i)\to\infty$, as a consequence of~(B2).

In order to obtain~\eqref{eq:HT1mirakhmedov}, it is more convenient to rewrite the left hand side.
To this end, note that by means of Theorem~5.1 in~\cite{Hajek_1964} and
the fact that $\sum_{i=1}^N\eta_i=\sum_{i=1}^Np_i=n$, we can write
\begin{equation}\label{eq:HT1equivalence}
\frac1{NS_N}
\sum_{i=1}^N
\left(
\eta_i-\pi_i
\right)
\frac{V_i}{\pi_i}\\
=
(1+o(1))
\frac1{NS_N}
\sum_{i=1}^N
\left(
\eta_i-p_i
\right)
\left(
\frac{V_i}{p_i}-\theta_N
\right)
\end{equation}
where
\begin{equation}
\label{def:BN}
\begin{split}
\theta_N
&=
\frac{1}{B_N^2}\sum_{i=1}^NV_i(1-p_i),\\
B_N^2
&=
\sum_{i=1}^N p_i(1-p_i)
=
(1+o(1))d_N,
\end{split}
\end{equation}
according to Theorem~5.1 in~\cite{Hajek_1964}.
The summation on the right hand side of~\eqref{eq:HT1equivalence} is of the form
\[
R_N(\eta)
=
\sum_{m=1}^N
f_{m,N}(\eta_m),
\quad
\text{where }
f_{m,N}(y)=\frac{1}{NS_N}
\left(
y-p_m
\right)
\left(
\frac{V_m}{p_m}
-
\theta_N
\right),
\]
which is of the type considered in~\cite{mirakhmedov2014}.
Furthermore, note that
\[
\begin{split}
\Lambda_N
&=
\sum_{m=1}^N \mathbb{E}_d\left[f_{m,N}(\xi_m)\right]=0\\
\gamma_N
&=
\frac{1}{B_N^2}
\sum_{m=1}^N
\text{cov}\left(f_{m,N}(\xi_m),\xi_m\right)=0.
\end{split}
\]
Under suitable conditions on that we specify below
\[
g_m(y)
=
f_{m,N}(y)-\mathbb{E}_df_{m,N}(\xi_m)-\gamma_N(y-\mathbb{E}_d\xi_m)
=
f_{m,N}(y),
\]
according to Theorem~3.1 in~\cite{mirakhmedov2014},
\begin{equation}
\label{eq:CLTmirakhmedov}
\frac{R_N(\eta)}{\sigma_N}\to N(0,1)
\end{equation}
in distribution, where
\[
\sigma^2_N
=
\sum_{m=1}^N
\text{Var}
\left[g_m(\xi_m)\right]
=
\frac{1}{N^2S_N^2}
\sum_{m=1}^N
\left(
\frac{V_m}{p_m}
-
\theta_N
\right)^2
p_m(1-p_m).
\]
From Theorem~5.1 and~6.1 in~\cite{Hajek_1964}, it follows that
\begin{equation}
\label{eq:conv sigmaN}
\sigma^2_N
=
(1+o(1))
\frac{1}{N^2S_N^2}
\sum_{m=1}^N
\left(
\frac{V_m}{\pi_m}
-
R\right)^2
\pi_m(1-\pi_m)
=
1+o(1),
\end{equation}
where $R=d_N^{-1}\sum_{i=1}^N\pi_i(1-\pi_i)$.
Therefore, \eqref{eq:HT1mirakhmedov} is equivalent with~\eqref{eq:CLTmirakhmedov} and it remains to check the conditions
of Theorem~3.1 in~\cite{mirakhmedov2014}.

Define (as mentioned in~\cite{Bertail_2013}, a factor $\sqrt{N}$ after $\epsilon$ is missing in~\cite{mirakhmedov2014})
\[
\begin{split}
\mathcal{L}_{1,N}(\epsilon)
&=
\frac{1}{B_N^{3}}
\sum_{m=1}^N \mathbb{E}_d\left|\xi_m-p_m\right|^3\mathds{1}\left\{|\xi_m-p_m|\leq \epsilon B_N\right\}
\\
\mathcal{L}_{2,N}(\epsilon)
&=
\frac{1}{B_N^2}
\sum_{m=1}^N \mathbb{E}_d\left|\xi_m-p_m\right|^2\mathds{1}\left\{|\xi_m-p_m|>\epsilon B_N\right\}\\
L_{2,N}(\epsilon)
&=
\frac{1}{\sigma_N^2}
\sum_{m=1}^N \mathbb{E}_d g_m(\xi_m)^2\mathds{1}\left\{|g_m(\xi_m)|>\epsilon \sigma_N\right\}\\
M_N(T)
&=
\inf_{T\leq \tau\leq \pi}
\sum_{m=1}^N
\left(
1-|\mathbb{E}_d\exp(i\tau \xi_m)|^2
\right),
\end{split}
\]
if $T\leq \pi$ else $M_N(T)=\infty$.
If for arbitrary $\epsilon>0$,
\begin{itemize}
  \item[(i)]
  $L_{2,N}(\epsilon)\to 0$,
  \item[(ii)]
  $\mathcal{L}_{2,N}(\epsilon)\to 0$,
  \item[(iii)]
  $M_N\left(\pi(4B_N\mathcal{L}_{1,N}(\epsilon))^{-1}\right)\to\infty$
  \item[(iv)]
  $\min\left(B_N,\sqrt{N}\right)=o\left(M_N\left(\pi(4B_N\mathcal{L}_{1,N}(\epsilon))^{-1}\right)\right)$
\end{itemize}
then~\eqref{eq:CLTmirakhmedov} holds, according to Theorem~3.1 in~\cite{mirakhmedov2014}.

\emph{ad(i).}
Since $|V_i|\leq K$ and $p_m/\pi_m=1+o(1)$, according to Theorem~5.1 in~\cite{Hajek_1964}, together with (C1) it follows that
for $N$ sufficiently large
\[
|g_m(\xi_m)|
\leq
\frac{2K}{NS_N}\left(\frac{N}{nK_1}+\frac{N}{B_N^2}\right)
\leq
\frac{2K}{nS_N}\left(\frac{1}{K_1}+\frac{n}{B_N^2}\right).
\]
Together, with condition (B1), there exists $C>0$, such that
\[
L_{2,N}(\epsilon)
\leq
\frac{C}{n^2S_N^2}
\frac{1}{\sigma_N^2}
\sum_{m=1}^N
\frac{\mathbb{E}_d g_m(\xi_m)^2}{\epsilon^2\sigma_N^2}
=
\frac{C}{n^2S_N^2}
\frac{1}{\epsilon^2\sigma_N^2}\to0
\]
according to (B3) and~\eqref{eq:conv sigmaN}.
This proves condition~(i) in~\cite{mirakhmedov2014}.

\emph{ad(ii).}
Since $B_N^2=d_N\to\infty$, for $N$ sufficiently large,  $\left\{|\xi_m-p_m|>\epsilon B_N\right\}\subset \left\{2>\epsilon B_N\right\}=\emptyset$,
which means that for $N$ sufficiently large $\mathcal{L}_{2,N}(\epsilon)=0$.
This proves condition~(ii) in~\cite{mirakhmedov2014}.

\emph{ad(iii)}
First note that (see also~\cite{Bertail_2013})
\[
|\mathbb{E}_d\exp(i\tau \xi_m)|^2
=
1+2p_m(1-p_m)\left(\cos \tau -1\right)
\]
so that for $T\in[0,\pi]$,
\[
M_N(T)=2\inf_{T\leq \tau\leq \pi}(1-\cos\tau)\sum_{m=1}^Np_m(1-p_m)=2B_N^2(1-\cos T).
\]
Because $B_N^2=d_N\to\infty$, for $N$ sufficiently large $\mathds{1}\left\{|\xi_m-p_m|\leq \epsilon B_N\right\}=1$.
This means that for $N$ sufficiently large
\[
\mathcal{L}_{1,N}(\epsilon)
=
\frac{1}{B_N^{3}}\sum_{m=1}^N \mathbb{E}_d\left|\xi_m-p_m\right|^3,
\]
where
\[
\mathbb{E}_d\left|\xi_m-p_m\right|^3
=
p_m(1-p_m)\left\{1-2p_m+2p_m^2\right\}.
\]
It follows that
\[
\frac12p_m(1-p_m)\leq \mathbb{E}_d\left|\xi_m-p_m\right|^3\leq p_m(1-p_m),
\]
so that for $N$ sufficiently large,
$2\leq 4B_N\mathcal{L}_{1,N}(\epsilon)\leq 4$,
and therefore
\[
\begin{split}
M_N\left(\pi(4B_N\mathcal{L}_{1,N}(\epsilon))^{-1}\right)
&=
2B_N^2(1-\cos \left(\pi(4B_N\mathcal{L}_{1,N}(\epsilon))^{-1}\right))\\
&\geq
2B_N^2(1-\cos \left(\pi/4\right))
\to\infty.
\end{split}
\]
This proves condition~(iii) in~\cite{mirakhmedov2014}.

\emph{ad(iv).}
From the previous computations it follows that
\[
\begin{split}
\frac{\min(B_N,\sqrt{N})}{M_N\left(\pi(4B_N\mathcal{L}_{1,N}(\epsilon))^{-1}\right)}
&\leq
\frac{\min(B_N,\sqrt{N})}{2B_N^2(1-\cos(\pi/4))}\\
&=
\frac{1}{2(1-\cos(\pi/4))}
\min\left(1/B_N,\sqrt{N}/B_N^2\right)
\to0
\end{split}
\]
according to (B2) and the fact that $B_N^2=d_N\to\infty$.
This proves condition~(iv) in~\cite{mirakhmedov2014}.
\hfill\tqed

}

\paragraph{Proof of~\eqref{eq:derivative phi}}
Following~\cite{dell2008}, one can write $\phi=\psi_2\circ \psi_1$, where
\[
\begin{split}
\psi_1(F)
&=
\left(F,\beta F^{-1}(\alpha)\right)\\
\psi_2(F,x)
&=
F(x).
\end{split}
\]
The Hadamard-derivative of $\phi$ can then be obtained from the chain rule,
e.g., see Lemma~3.9.3 in~\cite{van_1996}.
According to Lemma~3.9.20 in~\cite{van_1996}, for $0<\alpha<1$
and $F\in \mathbb{D}_{\phi}$ that have a positive derivative at $F^{-1}(\alpha)$, the map
$\psi_1$ is Hadamard-differentiable at $F$ tangentially to the set of functions
$h\in D(\mathbb{R})$ that are continuous at~$F^{-1}(\alpha)$ with derivative
\[
\psi_{1,F}'(h)
=
\left(
h,-\beta\frac{h(F^{-1}(\alpha))}{f(F^{-1}(\alpha))}
\right).
\]
It is fairly straightforward to show that for $F$ that are differentiable at $x$,
the mapping $\psi_2$ is Hadamard-differentiable at $(F,x)$
tangentially to the set of pairs $(h,\epsilon)$, such that
$h$ is continuous at $x$ and $\epsilon\in \mathbb{R}$, with derivative
\[
\psi_{2,(F,x)}'(h,\epsilon)
=
\epsilon f(x)+h(x).
\]
Then for $F\in \mathbb{D}_{\phi}$ that are differentiable at $\beta F^{-1}(\alpha)$,
the mapping $\psi_2$ is Hadamard-differentiable at $\psi_{1}(F)=\left(F,\beta F^{-1}(\alpha)\right)$.
It follows
from the chain rule that $\phi(F)=F\left(\beta F^{-1}(\alpha)\right)=\psi_2\circ \psi_1(F)$ is
Hadamard-differentiable at~$F$ tangentially to the set $\mathbb{D}_0$ consisting
of functions $h\in D(\mathbb{R})$ that are continuous at $F^{-1}(\alpha)$ with derivative
\[
\phi_F'(h)
=
-\beta\frac{f(\beta F^{-1}(\alpha))}{f(F^{-1}(\alpha))}
h(F^{-1}(\alpha))
+
h(\beta F^{-1}(\alpha)).
\]
\hfill\tqed

\end{document}